%
%
%
%

%
%
%

\documentclass[12pt]{amsart}
\usepackage{amsmath} 
\usepackage{amssymb}
\usepackage{amsfonts}

\usepackage[T1]{fontenc}

\usepackage{ndstye}

\usepackage{comment}

\newcommand{\br}{\mathbf R}

\newcommand{\Cal}{\mathcal}

\newcommand{\gs}{\gtrsim}

\newcommand{\hess}{\operatorname{Hess}}

\renewcommand{\iff}{\Leftrightarrow}
\newcommand{\im}{\operatorname{Im}}

\renewcommand{\ker}{\operatorname{Ker}}

\newcommand{\ls}{\lesssim}

\newcommand{\mn}[1]{\Vert#1\Vert}

\newcommand{\ol}{\overline}

\newcommand{\re}{\operatorname{Re}}

\newcommand{\restr}[1]{\big|_{#1}}

\newcommand{\set}[1]{\left\{\,#1\,\right\}}

\newcommand{\st}{\Sigma _2}
\newcommand{\supp}{\operatorname{\rm supp}}

\newcommand{\w}[1]{\langle #1\rangle }

\newcommand{\wf}{\operatorname{WF}}

\newcommand{\wt}{\widetilde}


\newcommand{\se}[1]{ S^#1_{1-{\varepsilon},{\varepsilon}}}

\textwidth 160 mm
\textheight 236 mm  

\numberwithin{equation}{section}

\begin{document}

\baselineskip 18pt 
\lineskip 2pt
\lineskiplimit 2pt

\title[Complex Limit]{Solvability and complex limit bicharacteristics}
\author[NILS DENCKER]{{\textsc Nils Dencker}}
\address{Centre for Mathematical Sciences, University of Lund, Box 118,
SE-221 00 Lund, Sweden}
\email{dencker@maths.lth.se}

\date {November 1, 2017}

\subjclass[2010]{35S05 (primary) 35A01, 58J40, 47G30 (secondary)}

\begin{abstract}
We shall study the solvability of pseudodifferential operators which are not of principal type. The operator will have complex principal symbol satisfying condition ($\Psi$) and we shall consider the limits of semibicharacteristics at the set where the principal symbol vanishes of at least second order. The convergence shall be as smooth curves, and we shall assume that the normalized complex Hamilton vector field of the principal symbol over the semicharacteristics converges to a real vector field.  
Also, we shall assume that the linearization of the real part of 
the normalized Hamilton vector field at the semibicharacteristic is tangent to and bounded on the tangent space of a Lagrangean submanifold at the semibicharacteristics, which we call a grazing Lagrangean space.
Under these conditions one can invariantly define the imaginary part of the subprincipal symbol.
If the quotient of the imaginary part of the subprincipal symbol with the norm of the Hamilton vector field switches sign from $ - $ to + on the bicharacteristics and becomes unbounded as they converge to the limit, then the operator is not solvable at the limit bicharacteristic. 
\end{abstract}

\maketitle

\thispagestyle{empty}

\section{Introduction}

We shall consider the solvability for a classical pseudodifferential
operator $P$  on a $C^\infty$ manifold $X$ which is not of principal type.   
$P\/$ is solvable at a compact set $K \subseteq X$ if the equation  
\begin{equation}\label{locsolv}
Pu = v 
\end{equation}
has a local solution $u \in \Cal D'(X)$ in a neighborhood of $K$
for any $v\in C^\infty(X)$
in a set of finite codimension. 
 
The pseudodifferential operator $P$ is classical if it has an asymptotic
expansion $p_m + p_{m-1} + \dots$ where $p_k$ is
homogeneous of degree $k$ in~$\xi$ and $p_m = {\sigma}(P)$
is the principal symbol of the operator.
$P\/$ is of principal type if the Hamilton vector field 
\begin{equation}
H_p = \sum_{j=1}^{n}\partial_{{\xi}_j} p\partial_{x_j} -
\partial_{x_j} p\partial_{\xi_j} 
\end{equation}
of the principal symbol $p = p_m$ does not have the radial
direction $\w{{\xi}, \partial_{\xi}}$ at $p^{-1}(0)$, in particular
$H_p \ne 0$ then. By homogeneity $H_p$ is well defined 
on the cosphere bundle $S^*X = \set{(x,{\xi}) \in T^*X:\ |{\xi}| =
  1}$, defined by some choice of Riemannean metric, and the principal type
condition means that $H_p $ is not degenerate on $ S^*X $.
For pseudodifferential operators of principal type, it is known
from~\cite{de:nt} and~\cite{ho:nec} that  
local solvability is equivalent to condition (${\Psi}$):
\begin{multline}\label{psicond} 
\text{$\im (ap)$
    does not change sign from $-$ to $+$}\\ 
 \text{along the oriented
    bicharacteristics of $\re (ap)$}
\end{multline}
for any $0 \ne a \in C^\infty(T^*M)$. This condition is of course trivial if the principal symbol 
is real valued. The oriented bicharacteristics are
the positive flow-outs of the Hamilton vector field $H_{\re (ap)} \ne
0$ on $\re (ap) =0$, and these are called
semibicharacteristics of $p$. 

We shall consider the case when  $P$ is not of principal type, instead the complex valued principal symbol vanishes of at least second order at the double characteristics~$\st$. We shall study necessary conditions 
for solvability when $\st$ is an involutive manifold, and since solvability is an open condition we shall assume that  $P$ satisfies condition (${\Psi}$) in the complement  of~$\st$ where it is of principal type.
Naturally, condition  (${\Psi}$) is empty on $\st$,
where instead we shall have necessary conditions on the next lower term $p_{m-1}$, called the  {\em subprincipal symbol}. The sum of the principal symbol and subprincipal symbol is called the {\em refined principal symbol}. 

Mendoza and Uhlman~\cite{MU1} studied the case when principal symbol $p$ is a product of two real symbols having transversal Hamilton vector fields at the involutive intersection~$\st$ of the characteristics. They  proved that $P$ is not solvable if the subprincipal symbol changes sign on the integral curves of these Hamilton vector fields on~$\st$, which are the limits of the bicharacteristics at~$\st$. Mendoza~\cite{Men}
generalized this to the case when the principal symbol is real and vanishes of second order at an
involutive manifold~$\st$  having an indefinite Hessian with rank equal to the
codimension of the manifold. The Hessian then gives well-defined limit
bicharacteristics over~$\st$, and $P$ is
not solvable if the subprincipal symbol changes sign on any of these
limit bicharacteristics. 
Since $\st $ is involutive, the limits of the bicharacteristics are tangent to the symplectic foliation of $ \st$, see Example~\ref{sympex}.
Thus, both  \cite{MU1} and~\cite{Men}  have constant sign of the subprincipal symbol on the limit characteristics as a necessary condition for solvability, which corresponds to condition ($P$) on the refined principal symbol.
This is natural since when the principal symbol vanishes of exactly second order one gets both directions on the limit bicharacteristics. 

These results were generalized in~\cite{de:lim} to
pseudodifferential operators with real principal symbol for which the linearization of the Hamilton vector field is tangent to and has uniform bounds on the tangent spaces of some Lagrangean manifolds at the bicharacteristics.  Then~$P$ is not solvable if condition (${\Psi}$) is not satisfied on the limit bicharacteristics, in the sense that the imaginary part of 
the subprincipal symbol switches sign from $-$ to $+$ on the
semibicharacteristics when converging to the limit
semibicharacteristic. 
The paper~\cite{de:sub} studied operators of subprincipal type, where the principal symbol  vanishes of at least second order at a
nonradial involutive manifold~$\st$  and the
subprincipal symbol is   
of principal type with Hamilton vector field tangent to~$\st$ at the
characteristics, but transversal to the symplectic foliation of~$\st$.
Then the operator was not solvable if the subprincipal 
symbol is constant on the symplectic leaves of~$\st$ after multiplication with a nonvanishing factor
and does not satisfy condition (${\Psi}$) on~$ \st$. In fact, if the principal symbol is proportional to a real symbol, then the result of~\cite{de:lim} gives nonsolvability generically when the  subprincipal symbol is not constant on the leaves.

In this paper, we shall extend the results of~\cite{de:lim} to
pseudodifferential operators with complex principal symbols. 
We shall consider the limits of
semibicharacteristics at the set $ \st $ where the principal symbol vanishes of at
least second order. The convergence shall be as smooth
curves, then the limit semibicharacteristic also is a smooth curve. 
We shall assume that the
normalized complex Hamilton vector field of the principal symbol on the semicharacteristics converges to a real
vector field on~$\st$. Then the limit semibicharacteristic are uniquely defined, and one can invariantly define the imaginary part of the subprincipal symbol.
Also, we shall assume that the linearization of the real part of 
the normalized  Hamilton vector field is tangent to and uniformly bounded on the tangent space of a Lagrangean submanifold at the semibicharacteristics, 
which we call a grazing Lagrangean space,
see~\eqref{cond02}.  We shall also assume uniform bounds on linearization of the imaginary part of the Hamilton vector field on the grazing Lagrangean space, see~\eqref{cond10}, \eqref{cond01} and Definition~\ref{lagrangedef}.

Our main result is Theorem~\ref{mainthm}, which
essentially says that under these conditions the operator is not
solvable at the limit 
semibicharacteristic if the quotient of the imaginary part of 
the subprincipal symbol with the norm of the Hamilton vector field
switches sign from $-$ to $+$ on the
semibicharacteristics and becomes unbounded as they converge to the limit
semibicharacteristic, see~\eqref{cond2}. Thus a non-homogeneous version of condition~($ \Psi $) on the refined principal symbol does not hold on the limit characteristics. This result implies the results of~\cite{de:lim}, \cite{MU1} and~\cite{Men}.

\section{Statement of results}

Let  $p$ be the principal symbol, ${\Sigma} =
p^{-1}(0)$ be the characteristics, and ${\Sigma}_2$ be the set of
double characteristics, i.e., the points on ${\Sigma}$ where $dp =
0$. Since we are going to study necessary conditions for solvability, 
we shall assume that $P$ satisfies condition (${\Psi}$) given by~\eqref{psicond} 
on $\Sigma_1 =  \Sigma\setminus \st $.
We shall study limits at~$\st $ of semibicharacteristics, and we shall assume that the normalized limit of $H_p$ 
is proportional to a  real vector field, in the sense that
\begin{equation}
|dp \wedge d\ol p| \ll |dp| \qquad \text{on $ \Gamma_j $ as $ j \to \infty $}
\end{equation}
We shall only use semibicharacteristics given by $H_{\re ap} $ such that $|\re a\nabla p| \ge c |\nabla p|$ at~${\Gamma}_j$ for some $c > 0$, where $\nabla p$ is the gradient of~$ p$. 
Let $\set{{\Gamma}_j}_{j=1}^\infty$ be a set of semibicharacteristics
of $p$ on $S^*X \bigcap \Sigma_1$ so that ${\Gamma}_j$
are bicharacteristics of $\re a_jp$ where $0 \ne  a_j \in C^\infty$ 
uniformly at~${\Gamma}_j$ and 
\begin{equation}\label{nablaest}
|\re a_j\nabla p| \ge c |\nabla p| \qquad \text{at~${\Gamma}_j$}
\end{equation}
for some fixed $c > 0$, observe that $ p = 0 $ on $ \Gamma_j $.
We shall assume that ${\Gamma}_j$
are uniformly bounded in $C^\infty$ when para\-metrized on a uniformly
bounded interval (for example  with respect
to the arc length). 
The bounds are defined with respect to some choice of 
Riemannean metric on $S^*X$, but different choices of metric will only
change the constants. In particular, we have a
uniform bound on the arc lengths:
\begin{equation}\label{hpcond0}
|{\Gamma}_j| \le C \qquad  \forall\,j
\end{equation}
In fact, we have that ${\Gamma}_j = \set{{\gamma}_j(t):\ t \in I_j}$ with
$|{\gamma}_j'(t)| \equiv 1$ and $|I_j| \le C$, then  $|{\gamma}_j^{(k)}(t)|
\le C_k$ for $t \in I_j$ and $\forall\ j$, $k \ge 1$. Let  the normalized gradient   $\wt p =
p/|\nabla p|$  and the normalized Hamilton vector field
\begin{equation*}
 H_{\wt p} = |H_p|^{-1}H_p \qquad \text{on $p^{-1}(0)\setminus \st$}
\end{equation*}
Then ${\Gamma}_j$ is uniformly bounded in $C^\infty$ if there exists positive constants $c$ and $C_k$
such that
\begin{equation}\label{hpcond}
| H_{\re a_j \wt  p}^k \nabla \re a_j \wt p| \le C_k \quad \text{and} \quad | H_{\re a_j \wt p}| \ge c \quad
 \text{at  ${\Gamma}_j$ }\quad \forall\, j, k 
\end{equation}
which implies that $|a_j| \ge c > 0$ at~$\Gamma_j $.
This means that the normalized
Hamilton vector field $H_{\re a_j \wt p}$ is
uniformly bounded in $C^\infty$ as a non-degenerate vector field over~ ${\Gamma}$, and this only depends on $ a_j\restr {\Gamma_j} $.
Observe that the semibicharacteristics
have a natural orientation given by the Hamilton vector field.  
Now the set of semibicharacteristic curves
$\set{{\Gamma}_j}_{j=1}^\infty$ is uniformly bounded in $C^\infty$ when
para\-metrized with respect to the arc length, and therefore it is a
precompact set. Thus there exists a subsequence ${\Gamma}_{j_k}$, $k
\to \infty$, that converge to a smooth 
curve ${\Gamma}$ (possibly a point), called a limit semibicharacteristic by
the following definition, which generalizes the definition in~\cite{de:lim}.

\begin{defn}
We say that a sequence of smooth curves ${\Gamma}_j$ on a smooth
manifold converges to a
smooth limit curve ${\Gamma}$ (possibly a point) if there exist
parametrizations on uniformly bounded intervals that converge in $C^\infty$. 
If $p \in C^\infty(T^*X)$, then we say that $\set{{\Gamma}_j}_{j=1}^\infty$ are a uniform family of semibicharacteristics of $p$ if~\eqref{hpcond0} and~\eqref{hpcond} hold.
A smooth curve ${\Gamma} \subset \st\
\bigcap S^*X$ is a {\em limit semibicharacteristic} of $p$ if there exists a uniform
family of semibicharacteristics of $p$ that converge to it. 
\end{defn}

Naturally, this definition is invariant under symplectic changes of coordinates, and 
the set
$\set{{\Gamma}_j}_{j=1}^\infty$ may have 
subsequences converging to several 
different limit semibicharacteristics, which could be points.
For example, if ${\Gamma}_j$ is parametrized with respect to the arc
length on intervals $I_j$ such 
that $|I_j| \to 0$, then we find that ${\Gamma}_j$ converges
to a limit curve which is a point.
Observe that if ${\Gamma}_j$ converge to a limit semibicharacteristic
${\Gamma}$, then~\eqref{hpcond0} and~\eqref{hpcond} must hold for~${\Gamma}_j$.

\begin{exe}\label{circlex}
Let ${\Gamma}_j$ be the curve parametrized by
\begin{equation*}
[0, 1]\ni  t \mapsto {\gamma}_j(t) = (t, \cos(jt)/j, \sin(jt)/j)/\sqrt{2}
\end{equation*}
Since $|{\gamma}'_j(t)| = 1$,  the curves are parametrized with respect to arc length, and we have that ${\Gamma}_j \to \Gamma = \set{(t,0,0):\ t \in \big [
0,2^{-1/2}\big]}$ in
$C^0$, but not in $C^\infty$ since $|{\gamma}_j''(t)|= j/\sqrt{2}$. If we 
parametrize ${\Gamma}_j$ with $x = jt \in [0, j]$ we find that
${\Gamma}_j$ converge to $\Gamma $ in $C^\infty$ but not on uniformly
bounded intervals.  
\end{exe} 

But we shall also need a condition on the differential of the Hamilton
vector field $H_p$ at the semibicharacteristic~$ \Gamma $ along a Lagrangean space, which will give bounds on the 
curvature of the semicharacteristics in these directions. 
If the semicharacteristics is the bicharacteristic of $ \re ap $ then 
we shall denote $ \Sigma =(\re a p)^{-1}(0) $ and $ T_w\Sigma = \ker d \re a p(w) \subset T(T^*X)$, where $ d \re a p(w) \ne 0 $ for $ w\in \Gamma $.
A section of Lagrangean spaces $L$
over a bicharacteristic ${\Gamma}$ is a map 
$$
{\Gamma} \ni w \mapsto L(w) \subset T_w(T^*X)
$$ 
such that $L(w)$ is a Lagrangean space in $T_w{\Sigma}$,
$\forall\,w \in \Gamma$.
If the section $L$ is $C^1$
then it has tangent space $TL \subset T_{L}(T_{\Gamma}(T^*X))$. 
Observe that since $L(w) \subset T_w{\Sigma}$ is Lagrangean
we find $ d \re a p(w)\restr {L(w)} = 0 $ and
$H_{\re ap}(w) \in L(w)$ when $w \in {\Gamma}$. 
Now we shall also have the condition that the {\em linearization} of $ H_{\re ap} $ at $ \Gamma $ is tangent to the Lagrangean space $ L $.

\begin{defn}\label{lagrangedef}
Let ${\Gamma}$ be a semibicharacteristic of\/ $p$, i.e., a bicharacteristic 
of $ \re (ap) $ for some $ 0 \ne a \in C^\infty $. We say that a $ C^1 $ section of
Lagrangean spaces $L$ over \/${\Gamma}$ is a section of {\em grazing
Lagrangean spaces} of\/ ${\Gamma}$ if $L \subset T_\Gamma \Sigma = \ker d \re a p\restr{\Gamma} \subset T_\Gamma (T^*X)$, and the
linearization (or first order jet) of $H_{\re a p} \subset T_\Gamma L$, the tangent space of~$ L $ at\/ $ \Gamma $.
\end{defn}

The linearization
of $H_{\re a p}(w)$ is given by the second order Taylor expansion of $\re a p$ at
$w$ and since $L(w)$ is Lagrangean we find that terms in that
expansion that vanish on $L(w)$
have Hamilton field parallel to~$L$. Thus, the condition that the
linearization of $H_{\re a p}(w)$ is in $ TL (w)$ only depends on the
restriction to $L(w)$ of the second order Taylor expansion of  $\re a p$ at
$w$. 
We find that Definition~\ref{lagrangedef} is invariant
under multiplication of~$\re a p$ by non\-vanishing real factors because $\re a  p(w)  = 0 $ and $ d \re a p(w)\restr{L(w)} = 0 $ since $ L \subset T_\Gamma\Sigma $. Thus the linearization of $H_{\re c a p}  $ is determined by $\hess \re c a p(w)\restr{L(w)}=  c \hess \re a  p(w)\restr{L(w)}$ when $c$ is real.
Thus the linearization only depends on the argument of $a_j $ at $ \Gamma_j $ so we can replace $H_{\re a p}(w)$ by 
$H_{ \re a \wt p}$ in the definition.

By Definition~\ref{lagrangedef} we find that the linearization of
$H_{\re a p}$ gives an evolution equation for the
section $L$, see Example~\ref{grazex}.
Choosing a Lagrangean subspace of $T_{w_0}{\Sigma}$
at $w_0 \in {\Gamma}$ then determines $L$ along ${\Gamma}$, so~$L$
must be smooth.
Actually, $L$ is the tangent space at ${\Gamma}$ of a smooth Lagrangean
submanifold of $(\re a p)^{-1}(0)$, see~\eqref{newtau}.

\begin{exe}\label{grazex}
Let $ p = {\tau} + i a(t,x)\xi_1 - \left(\w{A(t,x)x,x} + 2 \w{B(t,x)x,{\xi}} +
\w{C(t,x){\xi},{\xi}}\right)/2$, $(x,{\xi}) \in T^*\br^n$, where $a(t,x) \in C^\infty $ is real valued, 
$A(t,x)$, $B(t,x)$ and $C(t,x)\in C^\infty$ are
$n\times n$ matrices, such that $A(t,x) = A^t(t,x)$ and $C(t)= C^t(t,x)$ are symmetric,
and let ${\Gamma} = \set{(t,0,0,{\xi}_0):\ t \in
  I}$. Then $ H_{ \re p}= \partial_t $ at $ \Gamma $ and
$$( \re p)^{-1}(0) = \set{{\tau} = \w{\re A(t,x)x,x}/2 + \w{\re B(t,x)x,{\xi}}
  + \w{\re C(t,x){\xi},{\xi}}/2}$$ 
where $ \re F $ is the given by the real part of the elements of~$F$. The linearization of the Hamilton field $H_{ p}$ at $(t,0,0,{\xi}_0)$ is 
\begin{equation}\label{linham}
\partial_t + ia(t,0)\partial_{x_1} +  \w{A(t,0)y + B^t(t,0){\eta}, \partial_{\eta}} - \w{B(t,0)y +
        C(t,0){\eta}, \partial_y} 
\end{equation}
with $ (y,\eta) \in T (T^*\br^n) $.
Since $d \re p = d{\tau}$ at ${\Gamma}$, a $ C^1 $ section of Lagrangean spaces $L(t) \subset T_{\Gamma}\Sigma $
must be tangent to~$\Gamma$.
Thus, by choosing linear symplectic coordinates $ (y,\eta) $ we 
may obtain that
$$
L(t) = \set{(s,y,0,E(t)y): (s,y) \in \br^{n}}
$$ 
where $ E(t) \in C^1 $ is real and symmetric with $E(0) = 0$.
By applying~\eqref{linham} on $ \eta - E(t)y $, which vanishes on~$ L(t) $, we obtain that
$ L(t) $ is a grazing Lagrangean space if
\begin{equation}\label{eveqE}
 \partial_t E(t) \\
 = \re A(t,0) +  \re B(t,0)E(t) +  E(t)\re B^t(t) + E(t)\re C(t,0)E(t)
\end{equation}
Then by uniqueness we find that
$L(t)$ is constant in $t$ if and only if  $\re A(t,0) 
\equiv 0$, and then $A(t,0) =  \hess  p \restr {L(t)}$.
In general, the real part of $ \hess p \restr {L(t)} $ is given by the right hand side of~\eqref{eveqE}.
\end{exe}

\begin{exe}\label{pex}
If $ p $   is of principal type,  then one can choose $a  \ne 0$ and symplectic coordinates so that  $\re a p = \tau $ near $\Gamma = \set{(t,0,0,{\xi}_0):\ t \in I}$. Then one can take any Lagrangean plane in $\ker d\tau \restr \Gamma = T_\Gamma\Sigma $ which is tangent to $ \Gamma $.
\end{exe}

Observe that we may choose symplectic coordinates $(t,x;{\tau},{\xi})$
so that ${\tau}= \re a p$ and 
the fiber of $L(w)$ is equal to $ \set{(s,y,0, 0): (s,y) \in \br^{n}}$
at $w \in {\Gamma} 
= \set{(t,0;0,{\xi}_0): \ t \in I}$. But it is not clear that we
can do that {\em uniformly} for a family of semibicharacteristics~$ \set{\Gamma_j}$, for that we need additional conditions.
We shall assume that there exists a grazing Lagrangean space $L_j$ of
${\Gamma}_j$, $\forall\, j$, such that the
normalized Hamilton vector field $H_{\wt p}$ satisfies 
\begin{equation}\label{cond0}
 \left|d H_{\wt p}(w)\restr {L_j(w)} \right| \le C \qquad \text{for
  $w \in {\Gamma}_j$ }\quad \forall\, j
\end{equation}
This is equivalent to 
\begin{equation}\label{cond02}
\left|d H_{ p}(w)\restr {L_j(w)} \right| \le C |H_p| 
\end{equation}
for $w \in {\Gamma}_j$ since $L \subset T_\Gamma \Sigma$.
In fact, we have that $d H_{bp} = db H_p + b dH_p + dpH_b$ on~$\Sigma$.
Since the mapping ${\Gamma}_j \ni w \mapsto L_j(w)$ is determined by
the linearization of $H_{\re a_j \wt p}$ on $L_j$, thus by $d H_{\re a_j \wt
  p}(w)\restr {L_j(w)}$, 
condition~\eqref{cond0} implies that 
${\Gamma}_j \ni w \mapsto L_j(w)$ is uniformly in $C^1$, see
Example~\ref{grazex}. Observe that 
condition~\eqref{hpcond} gives~\eqref{cond0} in the direction of $T_w
{\Gamma}_j \subset L_j(w)$. Clearly condition~\eqref{cond0} is 
invariant under changes of symplectic coordinates and multiplications with
non-vanishing real factors.  
In general, we only have $d H_{\wt  p} = \Cal O(|H_p|^{-1}) $ since $d H_{ p} = \Cal O(1) $, and
by induction we find $\partial^\alpha  H_{\wt  p} = \Cal O(|H_p|^{-|\alpha|}) $, see Proposition~\ref{wtpest}

Observe that condition~\eqref{cond0} 
gives
\begin{equation}\label{cond00}
\left|d \nabla{\re a_j \wt p}(w)\restr {L_j(w)} \right| \le C \qquad \text{for
$w \in {\Gamma}_j$ }\quad \forall\, j
\end{equation}
Since $ \nabla \re a_j \wt p $ is uniformly proportional to the normal of the level surface
$(\re a_jp)^{-1}(0)$, condition~\eqref{cond00} gives a
uniform bound on the curvature of the level surface
$(\re a_jp)^{-1}(0)$ in the directions given by $L_j$ over ${\Gamma}_j$.

\begin{exe}\label{sympex}
Assume that $p(x,{\xi})$ vanishes of exactly order $k\ge 2$ at the
involutive submanifold $\st = \set{{\xi}'=0}$, ${\xi}=
({\xi}',{\xi}'') \in \br^m \times \br^{n-m}$, such that the
localization 
$${\eta} \mapsto \sum_{|{\alpha}| = k}
\partial_{{\xi}'}^{\alpha} p(x,0,{\xi}'') {\eta}^{\alpha}
$$ 
is of
principal type when ${\eta} \ne 0$. Then the semibicharacteristics of $p\/$
with $|\re a_j\nabla \wt p| \cong 1 $ satisfies~\eqref{hpcond} and \eqref{cond0}  
with $L_j = \set{{\xi}=0}$ at any point. In fact, 
$
|\partial_{{\xi}'} p(x,{\xi})|
\cong |{\xi}'|^{k-1}
$
and $\partial_{x,{\xi}''} p(x,{\xi}) = \Cal
O(|{\xi}'|^{k})$ so $H_{\wt p} = \partial_{{\xi}'}\wt p \partial_{x'} +
\Cal O(|{\xi}'|)$ and $\partial_x^{\alpha}
\nabla p = \Cal O(|{\xi}'|^{k-1})$, $ \forall\, \alpha $,
when $|{\xi}'| \ll 1$ and $|{\xi}| \cong 1$.
\end{exe}

Now for a uniform family of semibicharacteristics~$ \set{\Gamma_j}$  we shall denote
\begin{equation}\label{cond1}
  0 < \min_{\Gamma_j}|H_p| = {\kappa}_j \to 0 \qquad j \to \infty
\end{equation}
and we shall assume that 
\begin{equation}\label{cond10}
 |d p \wedge d \ol p\,| \le C {\kappa}_j^{14/3}|H_p|^2 \qquad\text{at
   ${\Gamma}_j$} 
\end{equation}
which by Leibniz' rule means that $|d \re \wt p \wedge d\im {\wt p}|
\le C {\kappa}_j^{14/3}$ on ${\Gamma}_j$. 
In fact, we have 
\begin{equation}\label{Leib}
d(ap) \wedge
d(\ol{ap}) = |a|^2d{p}\wedge d{\ol p} + 2i \im (a\ol p\, dp \wedge d\ol a ) + |p|^2 da
\wedge d\ol a
\end{equation}
where the two last terms vanish on~$\Sigma$.
This gives a measure on the complex part of~$H_p$ and gives that
$H_{\wt p}$ is proportional to a real vector field
on~${\Gamma}_j$ modulo terms that are $\Cal O({\kappa}_j^{14/3})$. 

With $ L_j$ as in \eqref{cond0} we shall assume the following condition
\begin{equation}\label{cond01}
 \left|d \restr {L_j}(d{p}\wedge d{\ol p})(w) \right|
 \le C {\kappa}_j^{4/3}|H_p|^2 \qquad \text{for $w \in {\Gamma}_j$ }\ \forall\, j
\end{equation}
where the outer differential is restricted to~$L_j$
on~${\Gamma}_j$. 
Observe that condition~\eqref{cond01} gives an estimate on the variation of the complex part of the Hamilton vector field along ~$ L $, whereas condition ~\eqref{cond0} gives an estimate on the variation of  the Hamilton vector field.
Using  \eqref{cond02}, \eqref{cond10} and~\eqref{Leib} we find that~\eqref{cond01} is equivalent to 
\begin{equation}\label{cond010}
\left |d \restr{L_j}(d\,{\re \wt p}\wedge d\,{\im \wt p})(w) \right |\le C{\kappa}_j^{4/3} \qquad \text{for $w \in {\Gamma}_j$ }\ \forall\, j
\end{equation}
In fact, the differential of the two last terms in~\eqref{Leib} vanish since $d p = 0$ on~$L_j$
and if $a = |\nabla p|^{-1}$ then $d a\restr {L_j} = \Cal O(a) $ by~\eqref{cond02}.

If $|\nabla \re\wt p\, | \cong |\nabla \wt p\,| = 1$, then we find from~\eqref{cond10}
that 
\begin{equation}\label{cond102}
| d\im {\wt p(w)}| \le C {\kappa}_j^{14/3}\qquad\text{ on $ \ker d \re \wt p(w)$}
\end{equation}
for $w \in {\Gamma}_j$. Since  $ d \restr
{L_j} d\,{\re \wt p}(w) =  \Cal O(1)  $ by~\eqref{cond0},  we find from~\eqref{cond010} that  
\begin{equation}\label{cond012}
d \restr
{L_j} d\,{\im \wt p}(w) =  \Cal O( \kappa_j^{4/3})  \qquad\text{ on $ \ker d \re \wt p(w)$}
\end{equation}
when $w \in {\Gamma}_j$. The estimates~\eqref{cond102} and~\eqref{cond012} will be needed in order to handle the imaginary part of the principal symbol as a perturbation, see Lemmas~\ref{translemma} and~\ref{transpterm}.  

Now, since the semibicharacteristics~${\Gamma}_j$ are uniform we have $ |H_{\re a_j\wt p}| \ge c $, which by~\eqref{cond10} gives
\begin{equation}\label{im}
\im (a_j\nabla \wt p) = {\beta}_j\re (a_j\nabla \wt p) + V_j   \qquad\text{at $ \gamma_j$}
\end{equation}
where $ \beta_j = \Cal O(1) $ and $|V_j| \le C {\kappa}_j^{14/3}$. The first part of the right hand side will not change the direction of $\Gamma_j$. Thus multiplying $ \wt p$ with the complex factor $ 1 -i\beta_j $ only changes the direction of the real part of the Hamilton vector field by terms that are $ \Cal O\big ({\kappa}_j^{14/3}\big) $. This only perturbs~${\Gamma}_j$ so that the distance to the original semibicharacteristic is $\Cal O({\kappa}_j^{14/3})$.
Now the derivative of the linearization of the Hamilton vector field is $\Cal O\big(|H_p|^{-2}\big) =  \Cal O\big (\kappa_j^{-2}\big )$,  see Proposition~\ref{wtpest}. Thus, the linearization is 
changed with a bounded factor and terms that are  $\Cal O\big({\kappa}_j^{8/3}\big)$. Thus, we find  from~\eqref{eveqE} that the grazing Lagrangean spaces $ L_j$ are only changed by terms that are  $\Cal O\big({\kappa}_j^{8/3}\big)$. Since $ \kappa_j \le |H_p| $ on~$\Gamma_j$ we find that conditions~\eqref{cond02}, ~\eqref{cond10} and~\eqref{cond01} are not changed. Observe that $ a_j $ is only defined on~$\Gamma_j$, but since~$\Gamma_j$ is a uniformly bounded smooth curve,  $ a_j $ can easily be uniformly extended to a neighborhood of~$\Gamma_j$.

\begin{rem}\label{semirem}
The family of uniform semibicharacteristics $ \set {\Gamma_j}_j $ satisfying condition \eqref{cond10} and the grazing Lagrangean spaces $ L_j $ of \/  $ \Gamma_j $ are invariant modulo perturbations of $ \Cal O(\kappa_j^{14/3}) $  under different choices of $a_j$ in~\eqref{hpcond}. Thus conditions~\eqref{cond02}, ~\eqref{cond10} and~\eqref{cond01} are well defined.
\end{rem}

Thus, the choice of~$a_j$ will be irrelevant when taking the limit.
Now, we shall only consider semibicharacteristics $ \Gamma_j $ with tangent vectors $ H_{\re a_j \wt p} $ so that
\begin{equation}\label{cond100}
|H_{\im a_j\wt p}| \le C{\kappa}_j^{14/3}\quad\text{and $ |a_j| > 1/C $ on~${\Gamma}_j$}
\end{equation}
which implies that $|\re \nabla a_j\wt p | \ge c > 0$ when $ \kappa_j \ll 1 $. Then the multipliers  $ a_j $ are {\em  well defined} on~$ \Gamma_j $ modulo uniformly bounded factors which have argument that are $\Cal O({\kappa}_j^{14/3})$.

The invariant subprincipal symbol~$p_s$ will be
important for the solvability of the operator near $ \st $.
For the usual Kohn-Nirenberg quantization of
pseudodifferential operators, the next lower order term is equal to 
\begin{equation}\label{subprinc}
p_{s} = p_{m-1} - \frac{1}{2i} \sum_j\partial_{{\xi}_j}\partial_{x_j} p 
\end{equation}
and for the Weyl quantization it is $p_{m-1}$. Both of these are equal to $p_{m-1}$ at the involutive manifold $ \st = \set {\xi' = 0} $ since then $\partial_\xi p \equiv 0 $ at $ \st $.

For the subprincipal symbol $p_s$ we shall have a condition that essentially means that 
condition~($\Psi$) does not hold for the subprincipal symbol.
Observe that if~\eqref{cond100} holds then the imaginary part of $a_j p_s$ is well defined 
modulo terms that are~ $\Cal O({\kappa}_j^{14/3})$.
Assuming~\eqref{cond100} we shall as in~\cite{de:lim} assume  that
\begin{equation}\label{cond2}
 \min_{\partial \Gamma_j}\int \im a_jp_s |H_p |^{-1}\, ds/ |\log
 {\kappa}_j| \to \infty \qquad j \to \infty
\end{equation}
where the integration is along the natural orientation given by
$H_{\re a_jp}$ on ~${\Gamma}_j$ starting at  $w_j 
\in \overset \circ {\Gamma}_j$. 
(Actually, it suffices that the minimum in~\eqref{cond2} is sufficiently large, depending on the norms of  the symbol of the operator.)
Since $  |H_p | \ge \kappa_j \to 0 $ on $\Gamma_j$, we find that condition~\eqref{cond2} is well defined independently of the choice of multiplier $a_j$ satisfying~\eqref{cond100}.

Observe that if~\eqref{cond2} holds then there must be
a change of sign of $\im  a_jp_s$ from $-$ to $+$ on ${\Gamma}_j$, and
\begin{equation}\label{c1}
\max_{\Gamma_j} (-1)^{\pm 1}\im  a_jp_s/|H_p| |\log {\kappa}_j| \to \infty
\qquad j \to \infty 
\end{equation} 
for both signs. 
Observe that condition~\eqref{cond2} for $ a_j $ satisfying~\eqref{cond100} is invariant under symplectic changes of
coordinates and multiplication with elliptic pseudodifferential
operators, thus under conjugation with elliptic Fourier integral
operators. In fact, multiplication only changes the subprincipal symbol 
with uniform non-vanishing factors and terms proportional to $|\nabla p\,| =
|H_p|$. By multiplying with~$a_j$ we may for simplicity assume that $a_j \equiv
1$. Then by choosing
symplectic coordinates $(t,x;{\tau},{\xi})$ near a given point $w_0 \in
{\Gamma}_j$  so that $\re p = {\alpha}{\tau}$ near $w_0$ with ${\alpha}
= |\re \nabla p| \ne 0$, 
we obtain that $\partial_{x_k}\partial_{{\xi}_k} \re p =
0$ at~${\Gamma}_j$, $ \forall\ k $, and $\partial_t\partial_{\tau} \re p
= \partial_t {\alpha} = \partial_t|\re \nabla p\,|$ at ${\Gamma}_j$ near
$w_0$. Thus, the second term in~\eqref{subprinc} only gives terms
which are either real or gives terms in
condition~\eqref{cond2} which are bounded by 
\begin{equation} 
\left| \int 
\partial_t|\re \nabla p|/|\nabla\re p|\, ds/|\log({\kappa}_j)| \right|= \Cal O(|\log(|\nabla
\re p|)|/|\log({\kappa}_j)|) = \Cal O(1)
\end{equation}
when $j \gg 1$ since $|\re \nabla p| \cong |\nabla p| \ge  {\kappa}_j \to 0$ on~${\Gamma}_j$ by~\eqref{cond100}.
Thus we obtain the following result.

\begin{rem}\label{subpr}
We may replace the subprincipal symbol $p_s$ by $p_{m-1}$
in~\eqref{cond2}, since the difference is bounded as $j \to \infty$.
\end{rem} 

One can define the {\em reduced principal symbol} as $p + p_s$, see Definition~18.1.33 
in \cite{ho:yellow}.
Then \eqref{cond2} means that a non-homogeneous version of condition~($\Psi$) does not hold for the reduced principal symbol.

\begin{exe}
If $p $ is real and vanishes of exactly order $ k \ge 2$ at an involutive manifold $ \st $, then we find that $|H_p | \cong d^{k-1}$ on $S^*X $ where $ d$ is the homogeneous distance to $\st  $.  If  $\im  p_s $
changes sign from $ - $ to +  on the semibicharacteristics and vanishes of order $\ell $ at~$ \st$, then~\eqref{cond2} holds if and only if $\ell < k - 1 $.
When $ k = 2$ this means that $\im p_s $
changes sign from $ -$ to +  on the limit bicharacteristic, as in the results of \cite{MU1} and~\cite{Men}.
\end{exe}

We shall study the microlocal solvability, which is
given by the following definition. Recall that $H^{loc}_{(s)}(X)$ is
the set of distributions that are locally in the $L^2$ Sobolev space
$H_{(s)}(X)$. 

\begin{defn}\label{microsolv}
If $K \subset S^*X$ is a compact set, then we say that $P$ is
microlocally solvable at $K$ if there exists an integer $N$ so that
for every $f \in H^{loc}_{(N)}(X)$ there exists $u \in \Cal D'(X)$ such
that $K \bigcap \wf(Pu-f) = \emptyset$. 
\end{defn}

Observe that solvability at a compact set $M \subset X$ is equivalent
to solvability at $S^*X\restr M$ by~\cite[Theorem 26.4.2]{ho:yellow},
and that solvability at a set implies solvability at a subset. Also,
by Proposition 26.4.4 in~\cite{ho:yellow} the microlocal solvability is
invariant under conjugation by elliptic Fourier integral operators and
multiplication by elliptic pseudodifferential operators.
The following is the main result of the paper.

\begin{thm}\label{mainthm}
Let $P \in {\Psi}^m_{cl}(X)$ have principal symbol ${\sigma}(P)
= p$ satisfying condition~$({\Psi})$, and subprincipal symbol $p_s$. Let
${\Gamma}_j \subset S^*X$, $ j = 1, \dots  $ be a uniform family of semibicharacteristics
of~$p$ so that~\eqref{cond02}, ~\eqref{cond10}, \eqref{cond01}
and~\eqref{cond2} 
hold for some $ a_j $ satisfying~\eqref{cond100} and grazing Lagrangean spaces~$ L_j $ of\/ $\Gamma$. Then $P$ is not  microlocally solvable at any limit
semibicharacteristics of~$\set{{\Gamma}_j}_j$. 
\end{thm}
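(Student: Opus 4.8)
The plan is to prove nonsolvability by constructing, for each large $j$, an approximate solution (quasimode) $u_j$ concentrated near the semibicharacteristic ${\Gamma}_j$ and to show that the existence of a genuine solution operator would contradict the $L^2$ bounds these quasimodes must satisfy. This is the classical Hörmander--Moyer--Beals--Fefferman scheme for necessary conditions, in the form developed in \cite{de:lim}; the point is to carry it through despite the imaginary part of the principal symbol being nonzero (but controlled by \eqref{cond10}, \eqref{cond01}, \eqref{cond100}). First I would reduce to a normal form: after conjugation by an elliptic Fourier integral operator and multiplication by an elliptic pseudodifferential operator (both of which by Remark~\ref{subpr} and the discussion preceding it leave conditions \eqref{cond02}, \eqref{cond10}, \eqref{cond01}, \eqref{cond2} intact) I may assume $a_j \equiv 1$, work near a point $w_0 \in {\Gamma}$, choose symplectic coordinates $(t,x;{\tau},{\xi})$ with $\re p = {\alpha}{\tau}$, ${\alpha} = |\re\nabla p| \ne 0$, and ${\Gamma} = \set{(t,0;0,{\xi}_0)}$. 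The grazing Lagrangean spaces $L_j$, together with \eqref{cond0}--\eqref{cond00}, let me straighten out $L_j$ \emph{uniformly}: choosing the linear symplectic coordinates in the fibers so that $L_j(w)$ is the image of $\set{(s,y,0,0)}$, as in Example~\ref{grazex}, with the evolution equation \eqref{eveqE} controlled by \eqref{cond02}. In these coordinates $P$ becomes (modulo lower order and modulo the harmless error from \eqref{cond100}) $D_t$ plus a quadratic form in $(x,D_x)$ coming from $\hess\re p|_{L_j}$, plus $i$ times the subprincipal term $\im p_s$, plus the small imaginary perturbation $\im\wt p$ of size $\Cal O({\kappa}_j^{14/3})$.

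Next I would build the quasimode. The ansatz is the usual WKB/Gaussian beam along ${\Gamma}_j$: $u_j(t,x) = \tau_j^{n/4}\,e^{i\tau_j\varphi_j(t,x)}\,b_j(t,x)$ with a large parameter $\tau_j$ chosen as a suitable negative power of ${\kappa}_j$, where $\varphi_j$ is a complex phase solving the eikonal equation with $\Im\varphi_j \gtrsim 0$ vanishing exactly on (the projection of) ${\Gamma}_j$ — here the grazing Lagrangean space $L_j$ is exactly the datum that makes the transport of the Hessian of $\varphi_j$ well posed and uniformly bounded, via \eqref{eveqE}, and the curvature bound \eqref{cond00} keeps $\Im\varphi_j$ uniformly positive-definite transversally. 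The amplitude $b_j$ is determined by the transport equations; the crucial feature, exactly as in \cite{de:lim}, is that the leading transport equation integrates $\Im p_s|H_p|^{-1}$ along ${\Gamma}_j$, so that $\|b_j\|$ picks up a factor $\exp\big(\int \Im a_j p_s |H_p|^{-1}\,ds\big)$; by \eqref{cond2} and \eqref{c1} this grows faster than any power of $|\log{\kappa}_j|$, hence beats any fixed power of $\tau_j$. The sign condition in \eqref{psicond}/\eqref{cond2} — change of sign from $-$ to $+$ — is what permits the phase to be chosen so that $\Im\varphi_j \ge 0$ globally along the curve while the amplitude still grows: this is the standard ``($\Psi$) is violated $\Rightarrow$ concentrating quasimode'' mechanism, and I would follow the energy/estimate bookkeeping of \cite{de:lim} for it. One then checks $\|P u_j\| \le C_N \tau_j^{-N}\|u_j\|$ for every $N$ while $\|u_j\|$ stays bounded below (after normalization), and that $u_j$ is microlocalized near ${\Gamma} \subset \st \cap S^*X$.

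Finally, the contradiction: if $P$ were microlocally solvable at the limit semibicharacteristic ${\Gamma}$, there would be $N$ and a constant so that $\|v\| \le C(\|P^* v\|_{(N)} + \|v\|_{(-N)})$ for $v$ microlocalized near ${\Gamma}$ — the standard functional-analytic consequence of solvability (open mapping / closed range), cf. \cite[Cor.~26.4.4]{ho:yellow}. Plugging $v = u_j$ and letting $j \to \infty$ contradicts the quasimode estimates, since the left side is bounded below and the right side tends to $0$. Therefore $P$ is not microlocally solvable at ${\Gamma}$.

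\smallskip
\emph{Main obstacle.} The genuinely new difficulty, compared with the real-principal-symbol case of \cite{de:lim}, is controlling the complex part of $H_p$ throughout the construction: one must show that the small imaginary perturbation ($\Cal O({\kappa}_j^{14/3})$ by \eqref{cond10}, with derivative along $L_j$ of size $\Cal O({\kappa}_j^{4/3})$ by \eqref{cond01}, in light of \eqref{cond102} and \eqref{cond012}) does not destroy the eikonal and transport analysis — in particular that it perturbs $\varphi_j$ and $b_j$ by amounts negligible against the $\exp\big(\int \Im p_s|H_p|^{-1}\big)$ gain, and that after multiplying $\wt p$ by $1 - i{\beta}_j$ as in \eqref{im} the grazing Lagrangean spaces and all of \eqref{cond02}, \eqref{cond10}, \eqref{cond01} survive (Remark~\ref{semirem}). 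Tracking the precise powers of ${\kappa}_j$ — matching the $14/3$ and $4/3$ exponents against the chosen power $\tau_j = {\kappa}_j^{-\sigma}$ and against the growth rate from \eqref{cond2} — is the delicate bookkeeping that makes the theorem work, and I expect this to be where the bulk of the real proof's technical weight lies, via the estimates of Proposition~\ref{wtpest} and Lemmas~\ref{translemma}, \ref{transpterm}.
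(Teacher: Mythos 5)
Your overall scheme is the paper's: reduce to a normal form for the adjoint, build approximate solutions of $P^*u=0$ concentrated along $\Gamma_j$, and contradict the a priori estimate \eqref{solvest} of Remark~\ref{solvrem}. But the central technical device you propose --- a complex Gaussian-beam phase $\varphi_j$ with $\im \varphi_j$ uniformly positive definite transversally to $\Gamma_j$ --- is not what the paper does, and under the stated hypotheses it has a genuine gap. Keeping $\im\partial_x^2\varphi_j(t,0)$ positive definite and uniformly bounded along $\Gamma_j$ requires solving a matrix Riccati equation whose coefficients are the \emph{full} transversal second derivatives of the normalized symbol $\wt p$ at $\Gamma_j$. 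The hypotheses only bound these along the real grazing Lagrangean $L_j$ (conditions \eqref{cond0}/\eqref{cond02}); in the complementary directions one only has $\partial^2\wt p=\Cal O(|H_p|^{-1})=\Cal O({\kappa}_j^{-1})$ by Proposition~\ref{wtpest}, which blows up as $j\to\infty$. So the Riccati flow is not uniformly controlled over the fixed-length interval $I$, and neither boundedness nor positivity of $\im\partial_x^2\varphi_j$ can be guaranteed uniformly in $j$. There is no ``positive complex Lagrangean'' version of the grazing hypothesis available to rescue this.

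The paper circumvents exactly this by using a \emph{real} phase: $\omega$ solves only $\partial_t\omega=\re r(t,x,\xi_0+\partial_x\omega)$ (Proposition~\ref{omegalem}), and after the reduction of Proposition~\ref{symplprop} one has $\partial_x^2\re r(t,0,\xi_0)\equiv0$, whence $\partial_x^2\omega(t,0)\equiv0$ --- the full transversal Hessian is never needed. Localization is achieved not by Gaussian decay but by compactly supported amplitudes at the scale $|x|\lesssim\lambda^{-5/12}$, with the exponents $\varepsilon=1/8$, $\delta=5/12$, $\varrho=1/24$ tuned so the stationary-phase expansion \eqref{exp} closes. Relatedly, you misplace where \eqref{cond2} enters: it does not shape the phase at all, but appears in the amplitude factor $\exp(-iB)$ of the transport equation, where \eqref{c3} (choosing the starting point at the maximum of the integral) makes $\exp(-iB)$ uniformly bounded, and \eqref{mcond2} makes it $\Cal O(\lambda^{-N})$ near $\partial\Gamma_j$ so that the $t$-cutoff errors are negligible; the imaginary part of the principal symbol contributes only the bounded perturbation $r_0=\lambda\im r$ handled in Lemma~\ref{transpterm} via the exponents $14/3$ and $4/3$ from \eqref{cond10} and \eqref{cond01}. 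You correctly identify this last point as the crux, but to complete the argument you would need to abandon the complex-phase ansatz in favor of the real-phase construction (or else supply a new hypothesis controlling the transversal Hessian off $L_j$).
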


In fact, if there exists a limit semibicharacteristic, then we can choose a subsequence of 
semibicharacteristics $ \Gamma_j $ converging to it, which gives conditions~\eqref{hpcond0}
and~\eqref{hpcond} for these $ \Gamma_j $, $ \forall\ j $. 
Observe that if the principal symbol is real, then conditions~(${\Psi}$),  \eqref{cond10} and~\eqref{cond01} are  trivially satified, and we obtain Theorem~2.9 in~\cite{de:lim}.

To prove  Theorem~\ref{mainthm} we shall use the following result. Let
$\mn{u}_{(k)}$ be the $L^2$ Sobolev norm of order $k$ for $u \in
C_0^\infty$ and $P^*$ the $L^2$ adjoint of $P$.

\begin{rem} \label{solvrem}
If $P$ is microlocally solvable at ${\Gamma}\subset S^*X$,
then Lemma 26.4.5 in~\cite{ho:yellow} gives that for any $Y \Subset
X$ such that ${\Gamma} \subset S^*Y$ there exists an integer ${\nu}$ 
and a pseudodifferential operator $A$ so that
$\wf(A) \cap {\Gamma} = \emptyset$ and
\begin{equation}\label{solvest}
 \mn {u}_{(-N)} \le C(\mn{P^*{u}}_{({\nu})} + \mn {u}_{(-N-n)} +
 \mn{Au}_{(0)}) \qquad u \in C_0^\infty(Y)
\end{equation}
where~$N$ is given by Definition~\ref{microsolv}.
\end{rem}

We shall use Remark~\ref{solvrem} to prove
Theorem~\ref{mainthm} in Section~\ref{pfsect} by constructing approximate local solutions to
$P^* u = 0$. We shall first prepare and get a microlocal normal form
for the adjoint operator, which will be done in
Section~\ref{normal}. We shall then apply $ P^* $ to an oscillatory solution, 
for which we shall solve the eikonal equation
in Section~\ref{eik} and the transport equations in Section~\ref{transp}.

\section{The normal form}\label{normal}

In the following we assume that the conditions in Theorem~\ref{mainthm} holds with some limit semibicharacteristic, observe that then~\eqref{hpcond0} and~\eqref{hpcond} hold for~${\Gamma}_j$.
We shall prepare the operator to a normal form as in~\cite{de:lim}, but since the principal symbol now is complex valued the preparation will be slightly different.
First we shall put the adjoint operator $P^*$ on a
normal form uniformly and microlocally near the semibicharacteristics
${\Gamma}_j \subset \Sigma \bigcap S^*X$ converging in $ C^\infty $ to ${\Gamma} \subset \st$. This will present some difficulties since we only have conditions at the
semibicharacteristics. By the invariance, we may multiply with an elliptic
operator so that the order of $P^*$ is $m=1$ and $P^*$ has the symbol
expansion $p + p_0 + 
\dots$, where $p$ is the principal symbol. By Remark~\ref{subpr} we may assume
that $p_0$ is the subprincipal 
symbol, and as before we shall assume~\eqref{cond100} so that $ |\re \nabla p| \cong |\nabla p| $. 
Observe that $ p = 0 $ on $ \Gamma_j $
and for the adjoint the signs in~\eqref{cond2} are
reversed, changing it to  
\begin{equation}\label{newcond2} 
\max_{\partial \Gamma_j}\int \im a_j p_0 |H_p |^{-1}\, ds/ |\log
 {\kappa}_j| \to -\infty \qquad j \to \infty
\end{equation}
where  ${\kappa}_j$ given by~\eqref{cond1}. 
Changing the starting point $w_j$ of the integration to the maximum of the integral
in~\eqref{newcond2}  only improves the estimate so we may assume that
\begin{equation}\label{c3}
 \int \im a_j p_0/|H_p|\, ds \le 0 \qquad \text{on ${\Gamma}_j$}
\end{equation}
with equality at $w_j \in {\Gamma}_j$.
Since $\nabla p_0$ and $\nabla H_p$ are bounded on $S^*X$ and $|H_p| \ge
{\kappa}_j$ on~${\Gamma}_j$, we find that
$ |H_p |$ and
$p_0/|H_p|$ only change with a fixed factor and a bounded term on an interval of
length $\ls {\kappa}_j$ on $ \Gamma_j $. Thus, we find
that integrating $\im a_j p_0/|H_p|$ over such intervals only gives bounded terms.
Therefore, by~\eqref{c1} we may assume that 
\begin{equation}\label{c4}
 |{\Gamma}_j| \gg {\kappa}_j 
\end{equation}
and that condition~\eqref{newcond2} holds on some intervals of length
$\cong {\kappa}_j$ at the endpoints of ${\Gamma}_j$.

Now we choose
\begin{equation}\label{ldef}
1 \le {\lambda}_j =
{\kappa}_j^{-1/{\varepsilon}} \iff {\kappa}_j =
{\lambda}_j^{-{\varepsilon}}
\end{equation}
for some $0 < {\varepsilon} \le 1$ to be determined later.
Then we may replace $|\log {\kappa}_j|$  with
$\log {\lambda}_j$ in~\eqref{newcond2}. 
By choosing a subsequence and renumbering, we may assume
by~\eqref{cond2} that 
\begin{equation}\label{mcond2}
 \max_{\partial \Gamma_j} \int \im a_j p_0/|H_p| \, ds \le  -j\log {\lambda}_j
\end{equation}
and that this also holds on some intervals of length $\cong {\kappa}_j$ at the
endpoints of ${\Gamma}_j$. Next, we introduce the normalized principal
and subprincipal symbols 
\begin{equation}\label{norm}
 \wt p = p/|H_p|  \qquad \text{and} \qquad \wt  p_0 = p_{0}/|H_p|
\end{equation}
Then we have that $H_{\wt p}\restr {\Gamma_j} \in C^\infty$ uniformly for the grazing Lagrangean space $ L_j$ of $ \Gamma_j$,
$|H_{\wt p}| = 1$ on ${\Gamma}_j$ and $dH_{\wt p}\restr {L_j}$ is
uniformly bounded at ${\Gamma}_j$ by~\eqref{hpcond} and~\eqref{cond0}. We find that
condition~\eqref{mcond2} becomes
\begin{equation}\label{rcond2}
 \max_{\partial \Gamma_j}\int \im a_j \wt  p_0 \, ds  \le  -j\log {\lambda}_j
\end{equation}
Observe that because of
condition~\eqref{c1} we have that 
$\partial {\Gamma}_j$ has two components since $\im a_j \wt  p_0$ has opposite
sign there, thus ${\Gamma}_j$ is a uniformly embedded curve.

In the following we shall consider a
fixed semibicharacteristic ${\Gamma}_j \subset \Sigma \bigcap S^*X$ and suppress the index $j$, so
that $ a = a_j $, ${\Gamma} = {\Gamma}_j$, $L = L_j$ and ${\kappa} =
{\lambda}^{-{\varepsilon}}  = {\kappa}_j$ for some $ \varepsilon > 0$ to be determined later. 
Observe that the preparation will be
uniform in~$j$  with $\lambda $ as para\-meter, assuming the conditions in Theorem~\ref{mainthm}. Now $H_{\re a \wt p} \in C^\infty$ uniformly on~${\Gamma}$
but not in a neighborhood. By~\eqref{hpcond} we may define the first order Taylor expansion
of~$\re a \wt p$ at~${\Gamma}$ uniformly. 
Since ${\Gamma} \in C^\infty$ uniformly, we can choose local  uniform
coordinates so that ${\Gamma} = \set{(t,0)):\ t \in I \subset
  \br}$ locally. In fact, we can take a local parametrization
${\gamma}(t)$ of ${\Gamma}$ with respect to the arc length and choose the
orthogonal space $M \subset \br^{n-1}$ 
to the tangent vector of ${\Gamma}$ at a point $w_0$ with respect to some
local Riemannean metric. Then $\br \times
M \ni (t,w) \mapsto {\gamma}(t) + w $ is uniformly bounded in $C^\infty$
with a uniformly bounded inverse near $(t_0,0)$ giving local coordinates
near~${\Gamma} = \set {(t,0): t \in I}$. We may then complete~$ t $ to a  uniform symplectic coordinate
system. Multiplying with the uniformly bounded function $a(t,0)$ we may assume 
that $a(t,0) \equiv 1$.
We can define the first order Taylor term of $\re \wt p$ at
${\Gamma}$ by 
\begin{equation}\label{qdef}
{\varrho}(t,w) = \partial_{w} \re \wt 
p(t,0)\cdot w \qquad w = (x,{\tau},{\xi})
\end{equation}
which is uniformly bounded.
This can be done locally, and by using a uniformly bounded partition of 
unity we obtain this in a fixed neighborhood of
${\Gamma}$. Going back to the original coordinates, we find that
${\varrho} \in C^\infty$ uniformly near~${\Gamma}$ and $\re \wt p - 
{\varrho} = \Cal O(d^2)$, but the
error is not uniformly bounded. Here $d$ is the homogeneous distance
to ${\Gamma}$, i.e., the distance with respect
to the homogeneous metric 
\begin{equation}\label{gdef}
dt^2 + |dx|^2 + (d{\tau}^2 + |d{\xi}|^2)/\w{({\tau},{\xi})}^2
\end{equation}
But by condition~\eqref{cond0} we find
that the second order derivatives of $\wt p$ along the Lagrangean space $L$
at ${\Gamma}$ are uniformly bounded. 
We shall use homogeneous coordinates, i.e., local coordinates which are normalized 
with respect to the homogeneous metric~\eqref{gdef}. 

By completing ${\tau} = {\varrho}$ in~\eqref{qdef} to a uniformly
bounded homogeneous symplectic  
coordinate system $(\tau, w) = (\tau, x,{\tau},{\xi})$ near ${\Gamma}$ and conjugating with the
corresponding uniformly bounded Fourier integral 
operator we may assume that
\begin{equation}\label{gammaco}
{\Gamma} = \set{(t,0;0, {\xi}_0): \ t \in I} \subset S^*\br ^n
\end{equation} 
for  $|{\xi}_0| = 1$ and some bounded interval $I \ni 0$, and that $\re \wt p \cong {\tau}$ modulo second order terms
at~${\Gamma}$. The second order terms are not uniformly bounded, but
$d\nabla  \wt p \restr {L}$ is uniformly bounded at~${\Gamma}$
by~\eqref{cond0}. Since $d\re \wt p = d \tau$ on~${\Gamma}$ we 
find that $H_{\re \wt p}\restr {\Gamma} = D_t$ and since $L\subset (d p)^{-1} (0) $ we may obtain 
that $L =  \set{(t,x;0,0)}$ at any given point at~${\Gamma}$ by 
choosing suitable linear symplectic coordinates~$ (x,\xi) $. 
We find from~\eqref{cond0} that
\begin{equation}\label{imp0}
\left|d \nabla \wt p(t,0;0, \xi_0) \restr L\right | \ls 1 \qquad t \in I
\end{equation}
Condition~\eqref{cond102} gives
\begin{equation}\label{imp1}
\left|\partial_{t,x,\xi} \im \wt p(t,0;0, \xi_0) \right | \ls \kappa^{14/3}= \lambda^{-14\varepsilon/3} \qquad t \in I
\end{equation}
and condition~\eqref{cond012} gives
\begin{equation}\label{imp2}
\left| d\, \partial_{t,x,\xi} \im \wt p (t,0;0, \xi_0) \restr L  \right | \ls \lambda^{-4\varepsilon/3}\qquad t \in I
\end{equation}
Here $a \ls b$ (and $b \gs a$) means that $a \le Cb$ for some $ C > 0$.

Let 
\begin{equation}\label{wtpdef}
 q(t,w) =  |\nabla p(t,w)| \ge {\lambda}^{-{\varepsilon}} \qquad \text{ at  
${\Gamma}$}
\end{equation}
and extend  $q$ so that it is homogeneous of degree $0$, then  $q$ is the norm of
the  {\em homogeneous gradient} of~$p$. Recall that $ \lambda \gg 1$ is a parameter that depends on the 
bicharacteristic~$ \Gamma $.
Since the symbols are homogeneous, we shall restrict them to $S^* \br^n$. There we shall choose coordinates $ (t,w) $ so that $w = 0$ on  ${\Gamma}$, and then localize in conical neighborhoods depending on the parameter~$ \lambda $. 
We have $|\nabla \wt p| \equiv 1 $ at~${\Gamma}$,
higher derivatives are not uniformly bounded but can be handled by
the using the metric 
\begin{equation}\label{geps}
 g_{\varepsilon} = (dt^2 +
|dw|^2){\lambda}^{2{\varepsilon}} \qquad w = (x,\tau,\xi)
\end{equation}
and the symbol classes $f \in S(m,g_{\varepsilon})$ defined by  
$\partial^{\alpha} f = \Cal O(m{\lambda}^{|{\alpha}|{\varepsilon}})$,
$\forall\, {\alpha}$.

\begin{prop}\label{wtpest}
If  \eqref{gammaco} and \eqref{wtpdef}  hold then $q$ is a weight for  $g_{\varepsilon}$, $q  \in S(q,
g_{\varepsilon})$ and $\wt p(t,w) \in
S({\lambda}^{-{\varepsilon}}, 
g_{\varepsilon} )$ when $|w| 
 \le c{\lambda}^{-{\varepsilon}}$  for some $c > 0$ on $S^* \br^n$ when $t \in I$.
\end{prop}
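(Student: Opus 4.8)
The plan is to show that the three assertions of Proposition~\ref{wtpest} — that $q$ is a weight for $g_\varepsilon$, that $q\in S(q,g_\varepsilon)$, and that $\wt p\in S(\lambda^{-\varepsilon},g_\varepsilon)$ on the conical region $|w|\le c\lambda^{-\varepsilon}$ over $I$ — all follow from a single underlying estimate: control of the derivatives of the homogeneous gradient $\nabla p$ along $\Gamma$ in terms of $\kappa=\lambda^{-\varepsilon}$. First I would reduce everything to $S^*\br^n$, using homogeneity of degree $0$ for $\wt p$ and $q$, so that it suffices to work with the metric $g_\varepsilon=(dt^2+|dw|^2)\lambda^{2\varepsilon}$ and the symbol condition $\partial^\alpha f=\Cal O(m\lambda^{|\alpha|\varepsilon})$. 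Since $\partial^\alpha H_p=\Cal O(1)$ on $S^*X$ (the full symbol is smooth and $p$ is fixed), one gets $\partial^\alpha q=\Cal O(1)$ trivially; the point is not smallness but that $q\ge\lambda^{-\varepsilon}$ on $\Gamma$ together with $\partial q=\Cal O(1)$ forces $q\cong\lambda^{-\varepsilon}$ (up to a bounded factor depending on $c$) on the whole region $|w|\le c\lambda^{-\varepsilon}$, provided $c$ is taken small. That is exactly the slowly-varying/weight condition: $|\partial q|\le C\lambda^\varepsilon q$ on the region, hence $q\in S(q,g_\varepsilon)$ and $q$ is a $g_\varepsilon$-weight by the standard criterion (Hörmander, Chapter 18).

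Next I would treat $\wt p=p/q$. On $\Gamma$ we have $p=0$ and $|\nabla\wt p|=|\nabla p|/q\equiv 1$, so by Taylor expansion from $\Gamma$, $\wt p=\Cal O(d)$ where $d$ is the homogeneous distance to $\Gamma$; on $|w|\le c\lambda^{-\varepsilon}$ this gives $\wt p=\Cal O(\lambda^{-\varepsilon})$, which is the claimed size $m=\lambda^{-\varepsilon}$. For the derivative bounds $\partial^\alpha\wt p=\Cal O(\lambda^{-\varepsilon}\lambda^{|\alpha|\varepsilon})=\Cal O(\lambda^{(|\alpha|-1)\varepsilon})$ I would argue by induction on $|\alpha|$, using $\wt p=p/q$, the Leibniz rule, $q\cong\lambda^{-\varepsilon}$, $\partial^\alpha p=\Cal O(1)$, and $\partial^\alpha q=\Cal O(1)$: dividing by $q$ costs a factor $\lambda^{\varepsilon}$ each time a $q$ appears in a denominator, and one needs to check the combinatorics of $\partial^\alpha(p/q)$ produce at most $|\alpha|+1$ powers of $q^{-1}$ against one power of $q$ or a derivative of $p$ in the numerator — this is the content of the general estimate $\partial^\alpha H_{\wt p}=\Cal O(|H_p|^{-|\alpha|})$ alluded to in the paragraph after~\eqref{cond02}, specialized to $\wt p$ rather than $H_{\wt p}$. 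The case $|\alpha|=1$ is $\partial\wt p=\partial p/q-p\partial q/q^2=\Cal O(\lambda^\varepsilon)+\Cal O(\lambda^{-\varepsilon})\Cal O(\lambda^{2\varepsilon})=\Cal O(\lambda^\varepsilon)$, using $p=\Cal O(\lambda^{-\varepsilon})$ on the region; wait — this already needs $p=\Cal O(d)=\Cal O(\lambda^{-\varepsilon})$, so the size estimate must be established first, then fed into the derivative induction.

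The main obstacle, I expect, is not the bookkeeping with powers of $\lambda^\varepsilon$ but rather verifying that $q$ genuinely stays comparable to $\lambda^{-\varepsilon}$ on a full conical neighborhood of radius $c\lambda^{-\varepsilon}$, uniformly in $j$ (i.e. in $\lambda$): one has the lower bound $q\ge\lambda^{-\varepsilon}$ only \emph{on} $\Gamma$, and must propagate it off $\Gamma$ using $|\partial q|\le C$ with $C$ independent of $j$. The inequality $|q(t,w)-q(t,0)|\le C|w|\le Cc\lambda^{-\varepsilon}$ then gives $q(t,w)\ge(1-Cc)\lambda^{-\varepsilon}$, so choosing $c<1/(2C)$ yields $q\ge\tfrac12\lambda^{-\varepsilon}$ throughout — this is clean provided the constant $C$ in $|\partial q|\le C$ is uniform, which it is because $q=|\nabla p|$ with $p$ a \emph{fixed} symbol, so all its derivatives are bounded on the compact cosphere bundle independently of which semibicharacteristic we are near. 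Once $q\cong\lambda^{-\varepsilon}$ is in hand, the weight property, $q\in S(q,g_\varepsilon)$, and the induction for $\wt p\in S(\lambda^{-\varepsilon},g_\varepsilon)$ are all routine, and I would close by remarking that the same argument shows $\partial^\alpha H_{\wt p}=\Cal O(\lambda^{|\alpha|\varepsilon})$, which is the form used later for the linearization estimates.
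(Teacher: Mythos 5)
Your outline follows the same route as the paper's proof (propagate the lower bound on $q$ off $\Gamma$ using $|\partial q|\le C$, deduce the weight property, then treat $\wt p = p/q$ by Leibniz and induction), but the quantitative bookkeeping has a genuine gap that your own computation exposes. Membership in $S(\lambda^{-\varepsilon},g_\varepsilon)$ requires $\partial^\alpha\wt p=\Cal O(\lambda^{(|\alpha|-1)\varepsilon})$, i.e.\ $\Cal O(1)$ for $|\alpha|=1$; yet your first-order computation yields only $\Cal O(\lambda^{\varepsilon})$. The loss comes from using the crude bounds $\partial p=\Cal O(1)$, $|p|=\Cal O(\lambda^{-\varepsilon})$ and $q^{-1}=\Cal O(\lambda^{\varepsilon})$: iterating them gives $\partial^\alpha\wt p=\Cal O(\lambda^{|\alpha|\varepsilon})$, one factor $\lambda^{\varepsilon}$ short at every order, so the argument as written only places $\wt p$ in $S(1,g_\varepsilon)$. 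The repair is exactly the point of carrying $q$ along as a weight instead of replacing it by $\lambda^{-\varepsilon}$: one has the sharper bounds $|\partial p|\le|\nabla p|=q$ and $|p|\ls q\,\lambda^{-\varepsilon}$ (from $p=0$ on $\Gamma$ and $|\nabla p|\cong q$ throughout the region), while $|\partial^\alpha p|\ls 1\ls q\lambda^{(|\alpha|-1)\varepsilon}$ for $|\alpha|\ge2$ since $q\gs\lambda^{-\varepsilon}$. This gives $p\in S(q\lambda^{-\varepsilon},g_\varepsilon)$; combined with $q\in S(q,g_\varepsilon)$, hence $q^{-1}\in S(q^{-1},g_\varepsilon)$, the factors of $q$ cancel in the quotient and $\wt p\in S(\lambda^{-\varepsilon},g_\varepsilon)$ follows. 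With your lossy bounds that cancellation is unavailable.

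Two subsidiary claims also need correction. First, $q\cong\lambda^{-\varepsilon}$ on the region is false: by \eqref{cond1} and \eqref{wtpdef}, $\lambda^{-\varepsilon}=\kappa_j$ is only the \emph{minimum} of $|H_p|$ over $\Gamma_j$, so $q$ can range between $\lambda^{-\varepsilon}$ and $\Cal O(1)$ along $\Gamma$; what is true, and what the weight property asserts, is $q(s,w)\cong q(t,0)$ for nearby points. Second, $\partial^\alpha q=\Cal O(1)$ is not ``trivial'' and is in fact false for $|\alpha|\ge2$: $q=|\nabla p|$ is a norm and does not have bounded higher derivatives where it is small (already $x\mapsto|x|$ has second derivative of size $|x|^{-1}$). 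The correct statement, obtained by induction from $\partial q=\re\nabla\ol p\cdot(\partial\nabla p)/q$ as in \eqref{qder}, is $q\in S(q,g_\varepsilon)$, i.e.\ $\partial^\alpha q=\Cal O(q\lambda^{|\alpha|\varepsilon})$. Only the case $|\alpha|=1$, where $\partial q=\Cal O(1)$ does hold by Cauchy--Schwarz, enters the propagation of the lower bound and the weight property, so that part of your argument is sound.
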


This gives  $p = q\wt p \in 
S(q{\lambda}^{-{\varepsilon}}, g_{\varepsilon})$ when $|w| 
\le c{\lambda}^{-{\varepsilon}}$. 
Observe that $b \in S^{\mu}_{1-{\varepsilon}, {\varepsilon}}$ if
and only if 
$b \in S({\lambda}^{\mu}, g_{\varepsilon})$ in homogeneous coordinates
when $|{\xi}| \cong  
{\lambda}  \gs 1$. In fact, in homogeneous coordinates $z $ this
means that $\partial_z^{\alpha} b = \Cal O(|{\xi}|^{{\mu} +
  |{\alpha}|{\varepsilon}})$. 
Therefore, we obtain by homogeneity that $ \wt p \in S^{1-{\varepsilon}}_{1-{\varepsilon},{\varepsilon}} $ and $ q^{-1
} \in S^{{\varepsilon}}_{1-{\varepsilon},{\varepsilon}}$ when $|w| \ls {\lambda}^{-\varepsilon}  \cong  |\xi|^{-\varepsilon} 
\ls 1 $.

\begin{proof}
We are going to use the previously chosen coordinates $(t,w)$  on $S^* \br^n$ so that ${\Gamma} =
\set{(t,0): \ t \in I}$. Now $\partial^2p = \Cal
O(1)$,  $q \ge {\lambda^{-\varepsilon}}$ at ${\Gamma}$ by~\eqref{wtpdef} and
\begin{equation}\label{qder}
 \partial q = \re \nabla \ol p\cdot (\partial\nabla p)/q 
 \qquad \text{when $q \ne 0$} 
\end{equation}
which is uniformly bounded.
We find that $q(s,w) \cong q(t,0)$ 
when $|s-t| + |w| \le c {\lambda^{-\varepsilon}}$ for small enough $c > 0$,
so $q$ is a 
weight for $g_{\varepsilon}$ there. This gives that 
$|p(t,w)| \ls q(t,w){\lambda^{-\varepsilon}}$, 
$|\nabla p(t,w)| = q(t,w)$ and $|\partial^\alpha p | \ls 1 \ls q{\lambda^{\varepsilon}} \ls q{\lambda^{(|\alpha| - 1)\varepsilon}}$ 
for $ |\alpha| \ge 2 $,
which gives $p \in 
S(q{\lambda^{-\varepsilon}}, g_{\varepsilon})$ when $ |w| 
\le c{\lambda^{-\varepsilon}}$ and $ t \in I$.  

We find from~\eqref{qder} that $ \partial  q = \alpha/q
$ where $\alpha \in S(q^2 \lambda^{\varepsilon}, g_\varepsilon) $  when $|w| 
\le c{\lambda^{-\varepsilon}}$
since $\nabla p \in S(q, g_{\varepsilon})$ in this domain. By
induction over the order of differentiation of $q$ we obtain from~\eqref{qder} that $q \in 
S(q,g_{\varepsilon})$ 
when $|w| \le c{\lambda^{-\varepsilon}}$, which gives the result. 
\end{proof}

As before, we take the restriction of $\wt p$ to $ |\xi| = 1 $, use local coordinates $(t,w)$  on $S^*\br ^n $ so that \eqref{gammaco} holds with $\xi_0 = 0 $ and put $Q(t,w) = {\lambda^{\varepsilon}}\wt
p(t{\lambda^{-\varepsilon}},w{\lambda^{-\varepsilon}})$ when  $t \in 
I_{\varepsilon}  = \set{t{\lambda^{\varepsilon}}:\ t \in I}$. 
Recall that $\lambda \gg 1 $ is fixed, depending on $ \Gamma$.
Then by Proposition~\ref{wtpest} we find that $Q \in C^\infty$ 
uniformly when $|w| \ls 1$ and  $t \in I_{\varepsilon}$,
$\partial_{\tau} \re Q \equiv 1$  and $|\partial_{t,x,{\xi}} \re Q| 
\equiv 0$ when $w = 0$ and $t \in I_{\varepsilon}$. Thus we find
$|\partial_{\tau} Q| \ne 0$ for 
$|w| \ls 1$ and $t \in I_{\varepsilon}$. 
By using Taylor's formula at~$ \Gamma $ we
can write $Q(t,x;{\tau},{\xi}) = {\tau} + h(t,x;{\tau},{\xi})$
when $|w| \ls 1$ and  $t \in I_{\varepsilon}$,
where $h = |\nabla\re h| = 0$ at $w = 0$.
By using the Malgrange preparation theorem, we obtain
\begin{equation*}
{\tau} = a(t,w)({\tau} + h(t,w)) + s(t,x,{\xi})
 \qquad |w| \ls 1 \quad t \in I_{\varepsilon}
\end{equation*}
where $a$ and $s \in C^\infty$ uniformly, $a \ne 0$, and on
${\Gamma}$ we have $a = 1$ and $s =
|\nabla \re s| = 0$. In fact, this can be done uniformly, first locally in $t$ and then by a
uniform partition of unity for $t \in I_\varepsilon$. This gives 
\begin{equation}\label{sdef}
a(t,w)Q(t,w) = 
{\tau} -  s(t,x,{\xi})
 \qquad |w| \ls 1 \quad t \in I_{\varepsilon}
\end{equation}
In the original coordinates, we find that 
\begin{equation*}
 {\lambda}^{\varepsilon}\wt p(t,w) =
 a^{-1}(t {\lambda}^{\varepsilon},w {\lambda}^{\varepsilon})({\tau}
 {\lambda}^{\varepsilon} 
 - s(t {\lambda}^{\varepsilon},x {\lambda}^{\varepsilon},{\xi}
 {\lambda}^{\varepsilon})) 
\end{equation*}
and thus
\begin{equation}\label{locprep}
 \wt p(t,w) = b(t,w)({\tau} - r(t,x,{\xi}))  \qquad |w| \ls
 {\lambda}^{-{\varepsilon}} \quad  t  \in I  
\end{equation}
where $0 \ne b \in S(1, g_{\varepsilon})$, $r(t,x,{\xi}) =
{\lambda^{-\varepsilon}}s(t {\lambda}^{\varepsilon},x
{\lambda}^{\varepsilon},{\xi}{\lambda}^{\varepsilon}) \in
S({\lambda^{-\varepsilon}}, 
g_{\varepsilon})$ when $  |w| \ls
{\lambda}^{-{\varepsilon}} $, and $ t  \in I $, $b = 1$ and $r = |\nabla \re r| = 0$
on ${\Gamma}$.  
By condition~\eqref{imp0} we find that 
\begin{equation}\label{cond000}
 \left|d \nabla r\restr L \right| \le C \qquad \text{at $ \Gamma $}
\end{equation}
since $r$ is constant in $\tau$. Similarly,
by conditions~\eqref{imp1} and~\eqref{imp2} we find that 
\begin{equation}\label{cond001}
|\nabla \im r | \ls \lambda^{-14\varepsilon/3} \qquad  \text{at $ \Gamma $} 
\end{equation}
and  
\begin{equation}\label{cond002}
\left|d \nabla \im r\restr L \right| \le C\lambda^{-4\varepsilon/3}  \qquad \text{at $ \Gamma $}
\end{equation}

Extending by  homogeneity, we obtain this preparation where the homogeneous distance in $(x,\xi) $ to $\Gamma$ is $\ls \lambda^{-\varepsilon} $, then \eqref{cond000}--\eqref{cond002} hold with the homogeneous gradient. Now, the symbol $ b$ is homogeneous but it is not in $S^0_{1,0} $ uniformly, instead it will have uniform bounds in a larger symbol class.
In the following, we shall denote by ${\Gamma}$ the rays in $T^*\br ^n$
that goes through the semibicharacteristic.
Recall that $\wt p = p/q$, where $q \in S(q, g_{\varepsilon})$ when $ |w| \ls \lambda^{-\varepsilon} $ and is homogeneous of degree 0. 
By homogeneity we obtain from~\eqref{locprep} that
\begin{equation*}
 b^{-1}q^{-1} p(t,x;{\tau},{\xi}) = {\tau} - r(t,x,{\xi}) 
\end{equation*}
where $b^{-1} \in S^0_{1-{\varepsilon},{\varepsilon}}$, $q^{-1} \in
S^{\varepsilon}_{1-{\varepsilon},{\varepsilon}}$ and ${\tau} - r \in
S^{1-{\varepsilon}}_{1-{\varepsilon},{\varepsilon}}$ 
when $ |\xi| \gs \lambda $ and the homogeneous distance $d(x,\xi)$ to
$(0,\xi_0)$ is less than $c |\xi|^{-\varepsilon} \ls  \lambda^{-\varepsilon}$, $c > 0$. 
In fact, in homogeneous coordinates this means that $ b^{-1} \in S(1, g_\varepsilon) $, $ q^{-1} \in S(\lambda^\varepsilon, g_\varepsilon) $ and $ r \in S(\lambda^{1-\varepsilon}, g_\varepsilon) $  when $ |\xi| \cong \lambda $.

Take a homogeneous cut-off
function ${\chi}(x,\xi) \in  S^0_{1,0} $ supported where $d(x,\xi) \ls 
\lambda^{-\varepsilon}$ so that $b \ge c_0 > 0$ in $\supp \chi$ and ${\chi} = 1$ when 
$d \le c \lambda^{-\varepsilon}$ for some $c > 0$, then we have $\chi \in  S^0_{1-{\varepsilon},{\varepsilon}} $ uniformly when $|\xi| \gs \lambda $.  We take the homogeneous symbol $B =
{\chi}b^{-1}q^{-1} \in \se {\varepsilon} $ uniformly when $ |\xi| \gs \lambda $ and we compose
the corresponding pseudodifferential operator $B \in \Psi^\varepsilon_{1-{\varepsilon},{\varepsilon}} $ with $P^*$. Since $P^* 
\in {\Psi}^1_{1,0}$ we obtain an asymptotic expansion of $BP^*$ in
$S^{1 + \varepsilon
  -j(1-{\varepsilon})}_{1-{\varepsilon},{\varepsilon}}$ for $j= 0$, 1,
$2, \dots$ when $ |\xi| \gs \lambda $. But actually the symbol is in a better class.
The principal symbol is 
\begin{equation*}
({\tau} - r(t,x,{\xi})){\chi} \in
S^{1-{\varepsilon}}_{1-{\varepsilon},{\varepsilon}}  \qquad 
   \text{for $ |\xi| \gs \lambda$}
\end{equation*}
and the calculus gives that the homogeneous term is equal to
\begin{equation}\label{sub}
\frac{i}{2}H_p({\chi}b^{-1}q^{-1}) + {\chi}b^{-1}q^{-1}p_0
\end{equation}
where  $p_0$ is the  homogeneous term of the expansion of $P^*$. As before, we shall
use homogeneous coordinates. 
Then Proposition~\ref{wtpest} gives $p = q\wt p \in
S(q{\lambda}^{-{\varepsilon}}, g_{\varepsilon}) $ when $ |\xi| \cong \lambda $
and since ${\chi}b^{-1}q^{-1} \in S(q^{-1}, g_{\varepsilon})  \subset S(\lambda^\varepsilon, g_{\varepsilon})$  when $ |\xi| \cong \lambda $, we find
that the terms in~\eqref{sub}  
are in $S({\lambda}^{\varepsilon}, g_{\varepsilon})$ when $d \ls 
{\lambda}^{-{\varepsilon}}$ and by homogeneity in  $S^{1-{\varepsilon}}_{1-{\varepsilon},{\varepsilon}}$ when  $d \ls  |\xi|^{-{\varepsilon}} \ls
{\lambda}^{-{\varepsilon}}$.
The value of $H_p$ at  ${\Gamma}$ is equal to $q\partial_t$ modulo terms with coefficients that are $ \Cal O(\lambda^{-14 \varepsilon/3}) $ by~\eqref{cond001} so the value of~\eqref{sub} is equal to
\begin{equation}\label{psub}
\frac{1}{2i}\partial_t q/q + p_0/q = \frac{D_t
|\nabla p| }{2|\nabla p|} + \frac{p_0}{|\nabla p|} \qquad \text{at ${\Gamma}$}
\end{equation}
modulo $ \Cal O(\lambda^{-8\varepsilon/3}) $.
Here $|\nabla p| = \sqrt{|\partial_x p|^2/|{\xi}|^2 + |\partial_{\xi}
  p|^2}$ is the homogeneous gradient, and the error of this approximation
is bounded by ${\lambda}^{2{\varepsilon}}$ times 
the homogeneous distance $d$ to~${\Gamma}$, since~\eqref {sub} is in $S(\lambda^\varepsilon, g_\varepsilon)$.  Observe that $p_0/|\nabla
p|$ is equal to the normalized subprincipal symbol of $P^*$ on $S^*\br^n$ given by~\eqref{norm}.
But we have to estimate the error terms in this preparation.

\begin{defn}\label{wfdef}
For $0 < \varepsilon < 1/2 $ and $R \in S^{\mu}_{{\varrho},{\delta}}$  where $\varrho + \delta \ge 1 $,
${\varrho} >{\varepsilon}$ and ${\delta} < 1 - {\varepsilon}$, we say that $S^*X \ni
(x_0,{\xi}_0) \notin \wf_{\varepsilon} (R)$ if for any $ N $ there exists $c_N > 0 $ so that
$R \in S^{-N}_{{\varrho},{\delta}}$ when
the homogeneous distance to the ray $\set{(x_0, {\varrho}\,{\xi}_0):\
{\varrho} \in \br_+}$ is less than $c_N|{\xi}|^{-{\varepsilon}}$. 

For a family of operators  $R_j \in {\Psi}^{\mu}_{{\varrho},{\delta}}$, $ j = 1,\dots$, we say that  $S^*X \ni  (x_j,{\xi}_j) \notin \wf_{\varepsilon} (R_j)$ uniformly with respect to $\lambda_j \ge 1 $, if for any $ N $ there exists $C_N > 0 $ so that $R_j \in S^{-N}_{{\varrho},{\delta}}$ uniformly in $ j$ when
the homogeneous distance to the ray $\set{(x_j, {\varrho}\, {\xi}_j):\
{\varrho} \in \br_+}$ is less than $C_N|{\xi}|^{-{\varepsilon}} \le C C_N\lambda_j^{-{\varepsilon}}$ for some $ C > 0$.
\end{defn}

By the calculus, this means that there exist $A_j\in
{\Psi}^{0}_{1-{\varepsilon},{\varepsilon}}$ so that $A_j \ge c > 0$ when the distance to the  ray through $(x_j,\xi_j) $ is less than $C_N|{\xi}|^{-{\varepsilon}} \ls \lambda_j^{-{\varepsilon}}$ such that $A_jR_j \in {\Psi}^{-N}$ uniformly. This neighborhood is in fact the points with  fixed  $g_{\varepsilon}$ distance  to the  ray through $(x_j,\xi_j) $  when $ |{\xi}| \gs {\lambda}_j$.
For example, if the homogeneous cut-off functions $\chi_j$ is equal to 1 where the homogeneous distance to the ray $\set{(x_j, {\varrho}\, {\xi}_j):\
{\varrho} \in \br_+}$ is less than $C_N \lambda_j^{-{\varepsilon}}$ then $ (x_j,{\xi}_j) \notin \wf_{\varepsilon} (1 - \chi_j)$ uniformly with respect to $\lambda_j$. 
It follows from the calculus that Definition~\ref{wfdef} is invariant under composition with classical elliptic pseudodifferential operators and 
under conjugation with elliptic homogeneous Fourier integral operators preserving the fiber, 
by the conditions on ${\varrho}$ and ${\delta}$. We also have that $\wf_{\varepsilon}(R) $ grows when $ \varepsilon$ shrinks and  $\wf_{\varepsilon}(R) \subset \wf (R)$.

Now we can use the Malgrange division theorem in
order to make the lower order terms independent on ${\tau}$ when $d \ls
 \lambda^{-{\varepsilon}} $, starting with  
the subprincipal symbol  $\wt p_0 \in
S^{{\varepsilon}}_{1-{\varepsilon},{\varepsilon}}$ of $BP^*$ given
by~\eqref{sub}. Then restricting to $ |\xi| =1 $ and
rescaling as before so that $Q_0(t,w) = 
{\lambda^{-\varepsilon}}\wt p_0(t{\lambda^{-\varepsilon}},
w{\lambda^{-\varepsilon}}) \in C^\infty$ uniformly, we obtain that  
\begin{equation*}
 Q_0(t,w) = \wt c(t,w)({\tau}
 -s(t,x,{\xi})) + \wt q_0(t,x,{\xi}) \qquad |w| \ls 1 \quad t \in
 I_{\varepsilon}
\end{equation*}
where $s$ is given by~\eqref{sdef}, and $\wt c$ and $\wt q_0$ are
uniformly in $C^\infty$. This can be 
done uniformly, first locally and then by a partition of unity for $t\in I_\varepsilon$. 
We find in the original coordinates that 
\begin{equation}\label{subpreploc}
\wt p_0(t,w) = c(t,w) (  {\tau} - r(t,x,{\xi}))
+ q_0(t,x,{\xi}) \qquad d \ls {\lambda}^{-{\varepsilon}}  \quad t \in I 
\end{equation}
where
$q_0(t,w)  = {\lambda^{\varepsilon}}\wt q_0(t{\lambda^{\varepsilon}},
w{\lambda^{\varepsilon}}) \in S(\lambda^\varepsilon, g_\varepsilon)$ and $c(t,w)  =
 {\lambda}^{2\varepsilon}\wt c(t{\lambda^{\varepsilon}},
 w{\lambda^{\varepsilon}})  \in S(\lambda^{2\varepsilon}, g_\varepsilon)$. 
By using a partition of unity, we obtain~\eqref{subpreploc}
uniformly when  the homogeneous distance to
${\Gamma}$ is  $\ls \lambda^{-\varepsilon}$.  By homogeneity we find as before that $ c$  is homogeneous of degree $ -1$ and $q_0 $  is homogeneous of degree 0, which gives $c \in 
S^{2{\varepsilon}-1}_{1-{\varepsilon},{\varepsilon}}$ and $q_0 \in
S^{\varepsilon}_{1-{\varepsilon},{\varepsilon}} $ when $|\xi| \gs \lambda$. 
Now the composition of the operators having symbols~
$c$ and~${\tau}-r$ gives error terms that are homogeneous of degree  $ -1$ and are uniformly in
$S^{3{\varepsilon}-1}_{1-{\varepsilon},{\varepsilon}}$ when $  |\xi| \gs \lambda  $. 
Thus if ${\varepsilon} < 1/3$ then by multiplication with an
pseudodifferential operator with symbol $1-c$ we can make the
subprincipal symbol independent of~${\tau}$. By iterating this
procedure we can successively make any lower order terms
independent of ${\tau}$ when 
the homogeneous distance $d$ to
${\Gamma}$ is less than~$c\lambda^{-\varepsilon}$. 
By applying a homogeneous cut-off function ${\chi}$ as before we
obtain the following result.

\begin{prop} \label{prepprop}
Assume that ~\eqref{hpcond0}, \eqref{hpcond}, \eqref{cond02}, \eqref{cond10}, \eqref{cond01} and~\eqref{cond2}
hold uniformly for~${\Gamma}_j$, $ L_j $ and $\lambda_j $ satisfying~\eqref{ldef} for some $\varepsilon > 0$.
By conjugating with uniformly bounded elliptic
homogeneous Fourier integral operators and multiplying with uniformly bounded homogeneous elliptic operators we may assume that $m=1$, $ a_j \equiv 1 $ and\/
${\Gamma}_j $ is given by~\eqref{gammaco}. If\/  $0 < {\varepsilon} < 1/3$
then for any $c > 0 $ we can obtain that $B_j  P^* = Q_j  + R_j \in
{\Psi}^{1-\varepsilon}_{1-{\varepsilon},{\varepsilon}}$ where $B_j  \in 
{\Psi}^\varepsilon_{1-{\varepsilon},{\varepsilon}}$ uniformly, 
${\Gamma}_j  \bigcap \wf_{\varepsilon} (R_j ) = \emptyset$ uniformly, and the symbol of $Q_j $ is equal to  
\begin{equation}\label{prepsymb}
 {\tau} - r(t,x,{\xi}) + q_0(t,x,{\xi}) +  r_0(t,x,{\xi}) \qquad
 \text{when $d_j(x,\xi) \le c  |\xi|^{-{\varepsilon}} \ls  \lambda_j^{-{\varepsilon}}$ and  $t \in I$}
\end{equation}
where $d_j$ is the homogeneous distance 
to ${\Gamma}_j$. 
Here $ r$ is homogeneous of degree 1 and $ q_0$ is homogenous of degree 0, 
$r \in
 S^{1-{\varepsilon}}_{1-{\varepsilon},{\varepsilon}}$, $q_0 \in 
 S^{{\varepsilon}}_{1-{\varepsilon},{\varepsilon}}$ and  $r_0 \in
 S^{3{\varepsilon} - 
 1}_{1-{\varepsilon},{\varepsilon}}$ uniformly. We also have   
 $r = | \nabla\re r| = 0$, $\nabla \im r = \Cal
O\big ({\lambda}_j^{-14{\varepsilon}/3}\big )$, $d\nabla \re r \restr {L} = \Cal O(1)$ 
and  $d\nabla \im r \restr {L} = \Cal O\big ({\lambda}_j^{-4{\varepsilon}/3}\big )$ 
on~${\Gamma}_j$. We find that $q_0$ is equal to
\begin{equation}\label{symb}
\frac{D_t |\nabla p(t,0)|}{2|\nabla p(t,0)|} +
\frac{{p_0}(t,0)}{|\nabla p(t,0)|} \qquad  \text{when $d_j(x,\xi) \le  c |\xi|^{-{\varepsilon}} \ls \lambda_j^{-{\varepsilon}}$ and $t \in I$}
\end{equation}
modulo terms that are $ \Cal
O\big ({\lambda}^{-8\varepsilon/3 } + {\lambda}^{2{\varepsilon}}d_j\big ) $ where $|\nabla p| = \sqrt{|\partial_x p|^2/|{\xi}|^2 +
  |\partial_{\xi} p|^2}$ is the homogeneous gradient of~$p$.
\end{prop}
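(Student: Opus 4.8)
The plan is to assemble into one statement the reductions already performed in the discussion above, keeping everything uniform in $j$. First I would reduce to $m=1$ and $a_j\equiv 1$: composing $P^*$ with a uniformly bounded classical elliptic operator of order $1-m$ only changes the subprincipal symbol by terms proportional to $|\nabla p|=|H_p|$ (Remark~\ref{subpr}), and multiplying by the uniformly bounded, uniformly elliptic factor $a_j$ — which by~\eqref{cond100} and Remark~\ref{semirem} is well defined on ${\Gamma}_j$ modulo $\Cal O({\kappa}_j^{14/3})$ and extends uniformly to a neighbourhood — makes $a_j\equiv 1$. Since $\set{{\Gamma}_j}$ is uniformly bounded in $C^\infty$ in arc length, I would complete the first order Taylor term ${\varrho}$ of $\re\wt p$ at ${\Gamma}_j$ (see~\eqref{qdef}) to a uniformly bounded homogeneous symplectic coordinate system and conjugate with the associated uniformly bounded elliptic Fourier integral operator; this brings ${\Gamma}_j$ to the form~\eqref{gammaco} with $\re\wt p\cong{\tau}$ modulo second order terms on ${\Gamma}_j$, and a linear symplectic change in $(x,{\xi})$ arranges $L_j=\set{(t,x;0,0)}$ at a prescribed point. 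Then~\eqref{cond0}, \eqref{cond102} and~\eqref{cond012} become~\eqref{imp0}, \eqref{imp1} and~\eqref{imp2}.

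Next I would invoke Proposition~\ref{wtpest}: in these coordinates $q=|\nabla p|$ is a weight for $g_{\varepsilon}$ and $\wt p\in S({\lambda}_j^{-{\varepsilon}},g_{\varepsilon})$ for $|w|\le c{\lambda}_j^{-{\varepsilon}}$. Restricting to $|{\xi}|=1$ and rescaling by $Q(t,w)={\lambda}_j^{{\varepsilon}}\wt p(t{\lambda}_j^{-{\varepsilon}},w{\lambda}_j^{-{\varepsilon}})$, which is uniformly $C^\infty$ with $\partial_{\tau}\re Q\equiv 1$ on ${\Gamma}_j$, Taylor's formula at ${\Gamma}_j$ gives $Q={\tau}+h$ with $h=|\nabla\re h|=0$ there, and the Malgrange preparation theorem yields $aQ={\tau}-s$ with $a,s$ uniformly $C^\infty$, $a\ne 0$, and $a=1$, $s=|\nabla\re s|=0$ on ${\Gamma}_j$; this is done first locally in $t$ and then made uniform by a bounded partition of unity on $I_{\varepsilon}$. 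Undoing the rescaling produces~\eqref{locprep}, $\wt p=b({\tau}-r)$, with $0\ne b\in S(1,g_{\varepsilon})$ bounded below near ${\Gamma}_j$ and $r\in S({\lambda}_j^{-{\varepsilon}},g_{\varepsilon})$ independent of~${\tau}$, and~\eqref{cond000}--\eqref{cond002} follow from~\eqref{imp0}--\eqref{imp2} since $r$ is constant in ${\tau}$ and $da\restr L=\Cal O(a)$.

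With this in hand I would take a homogeneous cut-off ${\chi}\in S^0_{1,0}$ supported where the homogeneous distance $d_j$ to ${\Gamma}_j$ is $\ls{\lambda}_j^{-{\varepsilon}}$, equal to $1$ where $d_j\le c{\lambda}_j^{-{\varepsilon}}$, with $b\ge c_0>0$ on $\supp{\chi}$ — so ${\chi}\in S^0_{1-{\varepsilon},{\varepsilon}}$ uniformly — and set $B_j={\chi}b^{-1}q^{-1}\in S^{{\varepsilon}}_{1-{\varepsilon},{\varepsilon}}$ uniformly. Since $P^*\in{\Psi}^1_{1,0}$, the ${\Psi}_{1-{\varepsilon},{\varepsilon}}$ calculus (which already needs ${\varepsilon}<1/2$) expands $B_jP^*$ with principal symbol $({\tau}-r){\chi}\in S^{1-{\varepsilon}}_{1-{\varepsilon},{\varepsilon}}$ and next term~\eqref{sub}, which by Proposition~\ref{wtpest} lies in $S({\lambda}_j^{{\varepsilon}},g_{\varepsilon})=S^{{\varepsilon}}_{1-{\varepsilon},{\varepsilon}}$; its value on ${\Gamma}_j$ equals~\eqref{psub}, hence~\eqref{symb}, modulo $\Cal O({\lambda}_j^{-8{\varepsilon}/3}+{\lambda}_j^{2{\varepsilon}}d_j)$ — the first term from combining the $\Cal O({\lambda}_j^{-14{\varepsilon}/3})$ error in the approximation $H_p=q\partial_t$ on ${\Gamma}_j$ (by~\eqref{cond001}) with the $\Cal O({\lambda}_j^{2{\varepsilon}})$ derivative bounds of $S({\lambda}_j^{{\varepsilon}},g_{\varepsilon})$ symbols, the second from moving off ${\Gamma}_j$. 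To remove the ${\tau}$-dependence of the lower order terms I would iterate Malgrange division: after rescaling, writing the subprincipal symbol as $\wt c({\tau}-s)+\wt q_0$ and multiplying by the operator with symbol $1-c$, $c\in S^{2{\varepsilon}-1}_{1-{\varepsilon},{\varepsilon}}$, removes the ${\tau}$-dependence at the cost of errors homogeneous of degree $-1$ lying in $S^{3{\varepsilon}-1}_{1-{\varepsilon},{\varepsilon}}$; for ${\varepsilon}<1/3$ this remainder has negative order, so the procedure can be run through all lower order terms while $d_j\ls{\lambda}_j^{-{\varepsilon}}$, the accumulated error being $r_0\in S^{3{\varepsilon}-1}_{1-{\varepsilon},{\varepsilon}}$. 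A final homogeneous cut-off collects the unprepared part into $R_j$ with ${\Gamma}_j\cap\wf_{\varepsilon}(R_j)=\emptyset$ uniformly, and $Q_j$ is defined by the symbol~\eqref{prepsymb}.

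I expect the main obstacle to be uniformity in $j$ throughout. The symbols are never in $S^0_{1,0}$ uniformly, only in the ${\lambda}_j$-dependent classes $S(\,\cdot\,,g_{\varepsilon})$, so every Malgrange preparation and division, every partition of unity on $I_{\varepsilon}$, and every composition in the calculus must be carried out in the rescaled picture where the symbols are uniformly $C^\infty$, and one must check that each accumulated error is uniformly $\wf_{\varepsilon}$-negligible or lands exactly in the classes $S^{1-{\varepsilon}}_{1-{\varepsilon},{\varepsilon}}$, $S^{{\varepsilon}}_{1-{\varepsilon},{\varepsilon}}$, $S^{3{\varepsilon}-1}_{1-{\varepsilon},{\varepsilon}}$ claimed — which is precisely what forces the restriction ${\varepsilon}<1/3$.
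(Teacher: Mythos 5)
Your proposal follows the paper's own construction in Section~\ref{normal} essentially step for step: the reduction to $m=1$, $a_j\equiv 1$, the conjugation to~\eqref{gammaco} via the first order Taylor term~\eqref{qdef}, the rescaled Malgrange preparation giving~\eqref{locprep}, the cut-off $B_j={\chi}b^{-1}q^{-1}$ and the computation of~\eqref{sub}--\eqref{psub} with the $\Cal O({\lambda}_j^{-8{\varepsilon}/3}+{\lambda}_j^{2{\varepsilon}}d_j)$ error, and the iterated Malgrange division with remainders in $S^{3{\varepsilon}-1}_{1-{\varepsilon},{\varepsilon}}$ forcing ${\varepsilon}<1/3$. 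This is correct and is the same argument as the paper's.
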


We shall apply the operator in Proposition~\ref{prepprop} on oscillatory solutions having frequencies $ \xi $ of size $\lambda$, see Proposition~\ref{expprop}.
Observe also that the integration of the term $D_t |\nabla
p(t,0)|/2 |\nabla p(t,0)|$ in~\eqref{symb} will give terms that are
$$
\Cal O(\log (|\nabla p(t,0)|)) = \Cal O(|\log ({\lambda})| + 1)
$$ 
which do not affect condition~\eqref{cond2}.

Recall that $L$ is a smooth section of  Lagrangean spaces 
$L(w) \subset T_w{\Sigma} \subset T_w(T^*\br^{n})$, $w \in {\Gamma}$, such that
the linearization of the Hamilton vector field $H_{\re p}$ is in $TL$ at
${\Gamma}$.
Here $ \Sigma = (\re p)^{-1}(0) $ and $ T_w\Sigma = \ker d \re p(w) $ where $ d \re p(w) \ne 0 $ for $ w \in \Gamma $.
By Proposition~\ref{prepprop} we may assume that ${\Gamma}
= \set{(t,0;0,{\xi}_0):\ t \in I}$, $0 \in I$,
and we may parametrize $L(t) = L(w)$ where $w  
= (t,0,{\xi}_0)$ for $t \in I$. Now since $T^*\br^n$ is a linear space, we may
identify the fiber of $T_w(T^*\br^n)$ with $T^*\br^n$. Since $L(w) \subset
T_w\Sigma$ and $w \in {\Gamma}$  
we find that $d{\tau} = 0$ in $L(w)$. Since $L(w)$ is Lagrangean, we 
find that $t$ lines are parallel to $L(w)$. By choosing linear symplectic coordinates in
$(x,{\xi})$ we obtain that $L(0) =  \set{(s,y;0,0):\
  {(s,y)} \in \br^{n}}$, then by condition~\eqref{cond000} we find that 
$\partial_x \nabla r(0,0,{\xi}_0)$ is uniformly bounded. 
Since $d{\tau} = 0$ on $L(t)$ and $L(t)$ is  Lagrangean 
we find by continuity for small $t$ that 
\begin{equation}\label{Lex}
  L(t) = \set{(s,y;0,A(t)y):\  {(s,y)} \in
  \br^{n}} 
\end{equation}
where $A(t)$ is real, continuous and symmetric for $t \in I$ and $A(0) =
0$. Since the linearization of the Hamilton vector field $H_{\re p}$ at
${\Gamma}$ is tangent to $L$, we find that $L$ is
parallel under the flow of that linearization. 
Since $L(t)$ is Lagrangean, the evolution of $t \mapsto L(t)$  is determined by
the restriction of the second order Taylor expansion  of $r(t,w)$ to $L(t)$. 
For~\eqref{Lex} this
restriction is given by the second order Taylor expansion of
\begin{equation*}
 R(t,x) = \re r(t,x,{\xi}_0 + A(t)x)
\end{equation*}
thus $\partial_x^2R(t,0)$ is uniformly bounded by condition~\eqref{cond0}.
The linearized Hamilton vector field is
\begin{multline*}
\partial_t +  \w{\partial_x^2R(t,0)x,\partial_{\xi}} = \partial_t +
 \w{\big(\partial_x^2\re r(t,0,{\xi}_0) + \partial_x\partial_{\xi}\re r(t,0,{\xi}_0)A \\ + A\partial_{\xi}\partial_x\re r(t,0,{\xi}_0)   +
 A\partial_{\xi}^2\re r(t,0,{\xi}_0)A\big)x,\partial_{\xi}}  
\end{multline*}
Applying this on
${\xi} - A(t) x$, which vanishes identically on $L(t)$ for $t \in I$, we obtain 
that the evolution of $L(t)$ is given by
\begin{multline}\label{eveq}
 A'(t) = \partial_x^2\re r(t,0,{\xi}_0) + 
 \partial_x\partial_{\xi}\re r(t,0,{\xi}_0)A(t) \\ + A(t)\partial_{\xi}\partial_x\re r(t,0,{\xi}_0)
+  A(t)\partial_{\xi}^2\re r(t,0,{\xi}_0)A(t) 
\end{multline}
with $ A(0) =0 $. This
is locally uniquely solvable and the right-hand side is uniformly
bounded as long as $A$ is bounded. Observe that  by uniqueness, $A(t) \equiv 0$
if and only if $\partial_x^2\re r(t,0,{\xi}_0) \equiv 0$, $\forall\,t$.
But since~\eqref{eveq} is non-linear, the solution
could become unbounded if $\partial_x^2\re r \ne 0$ and
$\partial_{\xi}^2\re r \ne 0$
so that $\mn {A(s)} \to \infty$ as $s \to t_1 \in
I$. This means that the angle between $L(t) = \set{(s,y;0,A(t)y):\ (s,y) \in
  \br^{n}}$ and the vertical space $\set{(s,0;0,{\eta}): (s,{\eta}) \in
  \br^{n}}$ goes to zero, but that is only a coordinate singularity.

In general, since we identify the fiber of $T_w(T^*\br^n)$ with $T^*\br^n$ we may
define $R(t,x,{\xi})$ for each~$t$ so that
\begin{equation}\label{defR}
R(t,x,{\xi}) =\re r(t,x,{\xi}_0 + {\xi}) \quad \text{when $(0,x;0,{\xi}) \in L(t)$} 
\end{equation}
Then $R = \re r$ on $L$ and we find  that 
\begin{equation}\label{newtau}
 {\tau} - \w{R(t)z,z}/2 \in C^\infty 
\end{equation}
if $z= (x,{\xi})$ and $R(t) = \partial^2_{z} R(t,0,0)\restr L(t)$. 
Observe that we find from~\eqref{cond000} that~\eqref{newtau} 
is uniformly in $ C^\infty $ in~$ z $ and  uniformly continuous in~$ t $.
We find that $R(0) = \partial_x^2\re r(t,0, {\xi}_0)$ and in general
$R(t)$ is given by the right hand side of~\eqref{eveq}.
Now we
can complete~$t$, $  {\tau} - \w{R(t)z,z}/2$ and $(x,\xi)\restr {t=0} $ to a uniform homogeneous symplectic coordinates system so that 
${\Gamma} = \set{(t,0,{\xi}_0):\ t \in I}
$ and $ L(0) = \set{(s,y;0,0):\ {(s,y)} \in  \br^{n}}$.
In fact, $(x,{\xi})$ satisfies a linear evolution equation 
$H_{\tau}(x,\xi) = 0$ and has the same value
when $t = 0$, so $(x,\xi) = 0 $ and $H_{\tau} = \partial_t $ on~${\Gamma}$. Since this is done by integration in $ t$,
it gives a uniformly bounded linear symplectic transformation in $(x,{\xi})$
which is uniformly $ C^1 $ in $ t $. It is given by a 
uniformly bounded elliptic Fourier integral operator $ F(t) $ on $\br^{n-1}$ which  is uniformly $ C^1 $ in $ t $.  We will call this type of Fourier
integral operator a {\em $C^1$ section of Fourier integral operators  on~$\br^{n-1}$}. 
This will give uniformly bounded terms when we conjugate  $ F(t) $ with a
first order differential operator in $t$, for example
the normal form of $ P^*$ given by~\eqref{prepsymb}.
For $ t $ close to $ 0 $ the section $ F(t) $ is given by multiplication with $ e^{i\w {A(t)x,x}} $,
where $ A(t) $ solves~\eqref{eveq}. For general $ t $ we can put $ F(t) $ on this form after a linear
symplectic transformation in ~$(x,{\xi})$.
Observe that $ F(t) $ is continuous on local $ L^2 $ Sobolev spaces in~$ x $, 
uniformly in ~$ t $, since it is continuous with respect to the norm $ \mn{(1 + |x|^2 + |D_x|^2)^k u} $, $ \forall\ k $.
In fact, it suffices to check this for the generators of the group of Fourier integral operators corresponding
to linear symplectic transformations of $ (x,\xi)$, which are given by the partial Fourier transforms, linear transformations in ~$ x $ 
and multiplication with $ e^{i\w{Ax,x}} $ where~$ A $ is real and symmetric.

We find in the new coordinates
that $ p = {\tau} - r_1$, where $r_1(t,x,{\xi})$ is independent
of~${\tau}$ and satisfies $ \partial^2_{z}\re r_1(t,0,0)\restr L(t)
\equiv 0$. This follows since 
$$
p(t,x;{\tau},{\xi}) = {\tau} - \w{R(t)z,z}/2 - r_1(t,x,{\tau},{\xi}) \qquad z = (x,\xi)
$$ 
where $ \partial^2_{z}\re r_1(t,0,0)\restr L(t)
\equiv 0$. We also have that $\partial_{\tau} r_1 = -
\set{t, r_1} = -\set{t,r} \equiv 0$, which is invariant under the change of symplectic
coordinates. Similarly we find that the lower order terms
$p_j(t,x,{\xi})$ remain independent of~${\tau}$ for~$j \le 0$. Since the evolution of
$L$ is determined by the second 
order derivatives of the principal symbol along $L$ by
Example~\ref{grazex}, we find that $L(t) \equiv \set{(t,x;0,0):\ {(t,x)} \in
  \br^{n}}$ after the change of coordinates. 
Since $ L $ is a grazing Lagrangean space, the linearization of $ H_{\re p} $ at $ \Gamma $ is tangent to $ L $. Thus 
$ \partial_x \re r_1  =  \partial_x^2 \re r_1= 0 $,  $\nabla \im r_1 = \Cal O\big({\lambda}_j^{-14{\varepsilon}/3}\big)$ and
condition~\eqref{cond002} gives that 
$\partial_{t,x}\nabla \im r_1 = \Cal O\big({\lambda}_j^{-4{\varepsilon}/3}\big)$  at  $ \Gamma_j $.
Changing notation so that $r = r_1$ and
$p(t,x;{\tau},{\xi}) = {\tau} - r(t,x,{\xi})$ we obtain the following result.

\begin{prop}\label{symplprop}
By conjugating with a uniformly bounded $C^1$ section of Fourier integral operators on~$\br^{n-1}$,
we may assume that the symplectic coordinates in Proposition~\ref{prepprop} 
are chosen so that the grazing Lagrangean space $L(w) \equiv \set
{(t,x,0,0):\ (t,x) \in \br^{n}}$, $\forall\, w \in {\Gamma}$, which
gives that 
$\partial_x \re r = \partial_x^2 \re r = 0$,
$\partial_{t,x}\nabla \re r  = \Cal O(1)$,  $\nabla \im r_1 = \Cal O \big ({\lambda}_j^{-14{\varepsilon}/3} \big )$
and  $\partial_{t,x}\nabla \im r = \Cal 
O  \big({\lambda}_j^{-4{\varepsilon}/3}  \big)$ at  $ \Gamma_j $.
\end{prop}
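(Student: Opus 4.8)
The plan is to take the normal form of Proposition~\ref{prepprop} as the starting point: $P^*$ has already been reduced by uniformly bounded elliptic Fourier integral operators and elliptic multipliers so that $m=1$, $a_j\equiv 1$, $\Gamma=\set{(t,0;0,\xi_0):\ t\in I}$, and $BP^* = Q + R$ with the symbol of $Q$ equal to $\tau - r + q_0 + r_0$. The remaining task is one further symplectic change of coordinates, realized by a $C^1$ section of Fourier integral operators $F(t)$ on $\br^{n-1}$, which straightens the grazing Lagrangean space $L$ to the constant horizontal space $\set{(t,x;0,0)}$ while keeping every bound uniform in $j$.

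First I would describe $L(t):=L(w)$, $w=(t,0,\xi_0)\in\Gamma$, explicitly. Since $L(w)\subset T_w\Sigma=\ker d\re p(w)$ and $d\re p=d\tau$ on $\Gamma$, the form $d\tau$ annihilates $L(w)$; choosing linear symplectic coordinates in $(x,\xi)$ at $t=0$ gives $L(0)=\set{(s,y;0,0)}$, and by continuity and the Lagrangean property $L(t)=\set{(s,y;0,A(t)y)}$ with $A(t)$ real symmetric, $A(0)=0$. Because $L$ is grazing, the linearization of $H_{\re p}$ at $\Gamma$ is tangent to $L$, which produces the Riccati evolution equation~\eqref{eveq} for $A(t)$; this is locally uniquely solvable with uniformly bounded right-hand side while $A$ stays bounded, and any blow-up is a mere coordinate singularity (the angle between $L(t)$ and the vertical space collapsing) that is removed by an extra linear symplectic transformation. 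I would then form $R(t)=\partial_z^2 R(t,0,0)\restr{L(t)}$ from~\eqref{defR} and note, using~\eqref{cond000}, that $\tau-\w{R(t)z,z}/2$ in~\eqref{newtau} is uniformly $C^\infty$ in $z$ and uniformly continuous in $t$.

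Next I would complete $t$, $\tau-\w{R(t)z,z}/2$ and $(x,\xi)\restr{t=0}$ to a uniform homogeneous symplectic coordinate system. The new $(x,\xi)$ solves the linear Hamiltonian flow $H_\tau(x,\xi)=0$ with unchanged value at $t=0$, so $(x,\xi)=0$ and $H_\tau=\partial_t$ on $\Gamma$; integrating in $t$ yields a uniformly bounded linear symplectic map in $(x,\xi)$, uniformly $C^1$ in $t$, hence a $C^1$ section $F(t)$ of elliptic Fourier integral operators on $\br^{n-1}$ (for $t$ near $0$ just multiplication by $e^{i\w{A(t)x,x}}$, and of this form after a linear symplectic change in general). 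The point is that conjugating the first-order-in-$t$ operator of Proposition~\ref{prepprop} by $F(t)$ produces only uniformly bounded extra terms, since $F'(t)$ is uniformly bounded, and that $F(t)$ is uniformly continuous on local $L^2$-Sobolev spaces in $x$ — which one checks on the generators, namely partial Fourier transforms, linear changes in $x$, and multiplication by $e^{i\w{Ax,x}}$. Consequently the conjugation preserves the symbol classes $S^{\bullet}_{1-\varepsilon,\varepsilon}$ and the uniform statement $\Gamma_j\bigcap\wf_\varepsilon(R_j)=\emptyset$.

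Finally I would read off the conclusions in the new coordinates. By construction $p=\tau-\w{R(t)z,z}/2-r_1$ with $\partial_z^2\re r_1(t,0,0)\restr{L(t)}\equiv 0$ and $\partial_\tau r_1=-\set{t,r_1}=-\set{t,r}\equiv 0$, an identity invariant under symplectic changes, so $r_1$ and all lower-order terms stay independent of $\tau$; and since the evolution of $L$ is governed by the second-order derivatives of the principal symbol along $L$ (Example~\ref{grazex}), $L$ becomes the constant horizontal space $\set{(t,x;0,0)}$. The grazing property then forces $\partial_x\re r_1=\partial_x^2\re r_1=0$; the uniform $C^1$ bound on $A(t)$ together with~\eqref{cond000} gives $\partial_{t,x}\nabla\re r_1=\Cal O(1)$; and~\eqref{cond001},~\eqref{cond002} transport to $\nabla\im r_1=\Cal O(\lambda_j^{-14\varepsilon/3})$ and $\partial_{t,x}\nabla\im r_1=\Cal O(\lambda_j^{-4\varepsilon/3})$ at $\Gamma_j$. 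Renaming $r=r_1$ completes the proof. I expect the main obstacle to be the uniformity bookkeeping: verifying that $F(t)$, built by integrating a potentially stiff Riccati flow, is genuinely a \emph{uniformly bounded} $C^1$ section of Fourier integral operators in the parameter $\lambda_j$, and that conjugation by it neither enlarges the symbol classes nor spoils the uniform $\wf_\varepsilon$-localization of the remainder $R_j$.
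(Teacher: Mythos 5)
Your proposal is correct and follows essentially the same route as the paper: the explicit parametrization $L(t)=\set{(s,y;0,A(t)y)}$ with the Riccati evolution~\eqref{eveq}, the completion of $t$, ${\tau}-\w{R(t)z,z}/2$ and $(x,{\xi})\restr{t=0}$ to uniform homogeneous symplectic coordinates, the realization of the resulting linear symplectic flow as a uniformly bounded $C^1$ section $F(t)$ of Fourier integral operators checked on the generators, and the transport of~\eqref{cond000}--\eqref{cond002} to the stated bounds on $r_1$. The uniformity concern you flag at the end is exactly the point the paper addresses by noting that $F(t)$ is built from a linear evolution with uniformly bounded coefficients (so it is uniformly bounded and uniformly $C^1$ in $t$), with the apparent Riccati blow-up being only a coordinate singularity removed by an extra linear symplectic transformation.
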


We shall apply the adjoint $P^*$ of the
operator on the form in Proposition~\ref{prepprop} on approximate
solutions on the form 
\begin{equation}\label{udef}
 u_{\lambda}(t,x) = \exp(i{\lambda}(\w{x,{\xi}_0} + {\omega}(t,x)))
 \sum_{j=0}^{M} {\varphi}_j (t, x){\lambda}^{-j{\varrho}} 
\end{equation}
where $ |\xi_0|  = 1 $, the phase function ${\omega}(t,x) \in
S({\lambda}^{-7{\varepsilon}}, g_{3\varepsilon})$ is real valued and the amplitudes
${\varphi}_j(t,x) \in S(1,g_{\delta})$ have support where $|x| 
\ls {\lambda}^{-{\delta}}$. 
Here ${\delta} \ge {\varepsilon}$ and  
${\varrho}$ are positive constants to be determined later. 
The phase function ${\omega}(t,x)$ will be constructed
in Section~\ref{eik}, see Proposition~\ref{omegalem}.
Observe that we have assumed
that ${\varepsilon} < 1/3$ in Proposition~\ref{prepprop}, but we shall
impose further restrictions on~${\varepsilon}$ later on.
We shall assume that  ${\varepsilon} + {\delta} < 1$, then if
$p(t,x,{\xi}) \in {\Psi}^{1-{\varepsilon}}_{1-{\varepsilon},{\varepsilon}}$  
when $ |\xi| \cong \lambda $ we obtain the asymptotic expansion 
\begin{multline}\label{trevesexp}
  p(t,x,D_x)  (\exp(i{\lambda}(\w{x,{\xi}_0} + {\omega}(t,x))){\varphi}(t,x))
  \\ \sim \exp(i{\lambda}(\w{x,{\xi}_0} +{\omega}(t,x))) \sum_{{\alpha}} 
   \partial_{{\xi}}^{\alpha }
  p(t,x,{\lambda}({\xi}_0 + \partial_x{\omega}(t,x)))\Cal 
  R_{\alpha}({\omega},{\lambda},D){\varphi}(t,x)/{\alpha}!
\end{multline} 
where $\Cal
R_{\alpha}({\omega},{\lambda},D){\varphi}(t,x) =  
D_y^{\alpha}(\exp(i{\lambda} \wt
{\omega}(t,x,y)){\varphi}(t,y))\restr{y=x}$ 
with
$$
\wt {\omega}(t,x,y) = {\omega}(t,y) - {\omega}(t,x) +
(x-y)\partial_x {\omega}(t,x)
$$
and the error term is of the same size as the next term in the expansion.
See for example Theorem~3.1 in~\cite[Chapter VI]{T2}, which is for classical pseudodifferential operators, phase
functions and amplitudes, but the proof is easily adapted to the case when these depend uniformly on parameters. 
Observe that since $ |\partial_x\omega| \cong \lambda^{-4\varepsilon} \ll 1 $ the expansion only involves the values of 
$ p(t,x,\xi) $ where $ |\xi| \cong \lambda \gg 1 $. 
Using this expansion we find that if $p $ is given by \eqref{prepsymb} then
\begin{multline}\label{exp}
e^{-i{\lambda}(\w{x,{\xi}_0} + {\omega}(t,x))}p(t,x,D_{t,x}) e^{i{\lambda}(\w{x,{\xi}_0} + {\omega}(t,x))}{\varphi}(t,x) \\ 
\sim {\lambda}\big(\partial_t
{\omega}(t,x) - r(t,x,{\xi}_0 + 
 \partial_x{\omega})\big ){\varphi}(t,x) \\ + D_t {\varphi}(t,x) -
 \sum_{j}\partial_{{\xi}_j}r(t,x,{\xi}_0 + 
 \partial_x{\omega})D_{x_j}{\varphi}(t,x)  + q_0(t,x,{\xi}_0 +
\partial_x{\omega}){\varphi}(t,x)
\\ +  \sum_{jk}\partial_{{\xi}_j}\partial_{{\xi}_k}r(t,x,{\xi}_0 +
 \partial_x{\omega})(\lambda^{-1} D_{x_j}D_{x_k}{\varphi}(t,x)
 + i {\varphi}(t,x) D_{x_j}D_{x_k}{\omega}(t,x))/2  +\dots
\end{multline}
which gives an expansion in
$S({\lambda}^{1- {\varepsilon} - j(1- {\delta}- {\varepsilon})}, 
g_{{\delta}})$, 
$j \ge 0$, if ${\delta} +
{\varepsilon} < 1$ and ${\varepsilon} < 1/4$. 
In fact, since $|{\xi}| \cong {\lambda}$ every
${\xi}$ derivative on terms in
$S^{1-{\varepsilon}}_{1-{\varepsilon},{\varepsilon}}$ gives
a factor that is $\Cal O({\lambda}^{{\varepsilon}-1})$ and every $x$ derivative
of ${\varphi}$ gives a factor that is $\Cal
O({\lambda}^{{\delta}})$. A factor ${\lambda} D_x^{\alpha}{\omega}$
requires $|{\alpha}|  \ge 2$ number of ${\xi}$ derivatives of a term in
the expansion of 
$P^*$, which gives a factor that is $\Cal O({\lambda}^{1+ (-7 + 3|\alpha|)\varepsilon  - |{\alpha}|(1-{\varepsilon})}) =
\Cal O({\lambda}^{1 - 7\varepsilon - |{\alpha}|(1-4{\varepsilon})})  =
\Cal O({\lambda}^{-1 + \varepsilon })$. 
Similarly, the expansion coming from terms in $P^*$ that have symbols in
$S^{\varepsilon}_{1-{\varepsilon},{\varepsilon}}$ gives an expansion
in $S^{\varepsilon - j(1 - {\delta} -
  {\varepsilon})}_{1-{\varepsilon},{\varepsilon}}$, $j \ge 0$.
Thus, if ${\delta} + {\varepsilon} < 2/3$ and ${\varepsilon} < 1/4$
then the terms in the expansion are $\Cal O({\lambda}^{{\delta} +
2{\varepsilon} -1}) $ except the terms in~\eqref{exp}, and for the
last ones we find that 
\begin{equation}\label{exp0}
 \sum_{jk}\partial_{{\xi}_j}\partial_{{\xi}_k}r(t,x,{\xi}_0
 + \partial_x{\omega})({\lambda}^{-1}D_{x_j}D_{x_k}{\varphi} + i{\varphi}D_{x_j}D_{x_k}{\omega})
 = \Cal 
 O({\lambda}^{2{\delta} +
   {\varepsilon} -1} + {\lambda}^{3{\varepsilon} - {\delta}}) 
\end{equation}
In fact,  $\partial_{{\xi}_j}\partial_{{\xi}_k}r(t,x,{\xi}_0
+ \partial_x{\omega}) = \Cal O({\lambda}^{{\varepsilon}})$ and
 $D_{x_j}D_{x_k}{\omega} = \Cal O({\lambda}^{2{\varepsilon}}d)$
when ${\varphi} \ne 0$, since we have $D_{x_j}D_{x_k}{\omega} = 0$ when $x =
0$, and $d  = \Cal O({\lambda}^{-{\delta}})$ in $\supp {\varphi}$. 

The error terms in~\eqref{exp0} are of equal size if $2{\delta} +
{\varepsilon} -1 = 3{\varepsilon} -
{\delta}$, thus ${\delta} = (1+2
{\varepsilon})/3  \ge \varepsilon$ since  $ \varepsilon \le 1 $. Since $\delta + \varepsilon < 1 $ we obtain 
that $4\varepsilon - 1 <  3 {\varepsilon} - {\delta} 
= (7{\varepsilon} -1)/3 < 0$ if
${\varepsilon} < 1/7$ and
$1 - {\delta} - {\varepsilon} = (2 - 5{\varepsilon})/3 > 1/3$ if ${\varepsilon} < 1/5$.
Thus we obtain the following result.

\begin{prop}\label{expprop}
Assume that $p$ is given by~\eqref{prepsymb},  ${\omega}(t, x) \in
S({\lambda}^{-7{\varepsilon}}, g_{3\varepsilon})$ is real valued with
$\partial_x {\omega}(t,0) \equiv \partial_x^2 {\omega}(t,0) \equiv 0$, and
${\varphi}_j(t,x) \in S(1,g_{\delta})$ has support where $|x| 
\ls {\lambda}^{-{\delta}}$ with positive ${\delta}$ and~${\varepsilon} $.  If
${\delta} = (1+ 2{\varepsilon})/3$ and ${\varepsilon} < 1/7$,
then~\eqref{exp} has an expansion in $S({\lambda}^{1-
  {\varepsilon} - j(2 - 5{\varepsilon})/3}, 
g_{{\delta}})$, $j \ge 0$, and
is equal to 
\begin{multline}\label{exp1}
{\lambda}\big (\partial_t {\omega}(t,x) -
 r(t,x, {\xi}_0 +
 \partial_x{\omega})\big ){\varphi}(t,x) \\ +  D_t {\varphi}(t,x) -
 \sum_{j}\partial_{{\xi}_j}r(t,x,{\xi}_0 +
 \partial_x{\omega}) D_{x_j}{\varphi}(t,x) + q_0(t,x, {\xi}_0 +
 \partial_x{\omega}){\varphi}(t,x)
\end{multline}
modulo terms that are $\Cal O({\lambda}^{(7{\varepsilon} -1)/3}) = \Cal O({\lambda}^{2{\delta} +
{\varepsilon} -1})$.
\end{prop}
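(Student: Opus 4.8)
The plan is to read off~\eqref{exp} directly from the oscillatory composition formula~\eqref{trevesexp} and then to sort the resulting terms by order in~$\lambda$; the order count needed is exactly the one carried out in the paragraphs preceding the statement. First I would write $p(t,x,D_{t,x}) = D_t - r(t,x,D_x) + q_0(t,x,D_x) + r_0(t,x,D_x)$ with $p$ as in~\eqref{prepsymb}, so that the $\tau$-part acts directly: conjugating $D_t$ by $e^{i\lambda(\langle x,\xi_0\rangle + \omega)}$ produces $\lambda\partial_t\omega + D_t$ acting on~$\varphi$. To the three $\Psi$DO parts in~$x$ I would apply~\eqref{trevesexp}, i.e. the version of Theorem~3.1 in~\cite[Chapter VI]{T2} adapted to symbols, phases and amplitudes depending uniformly on the parameter~$\lambda$; here one uses $|\partial_x\omega| \cong \lambda^{-4\varepsilon} \ll 1$ so that only the values of the symbols where $|\xi| \cong \lambda$ enter, which is exactly the region where the normal form~\eqref{prepsymb} is valid. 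This gives the full expansion~\eqref{exp}.

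Next I would collect terms by order. The $j=0$ contribution is $\lambda(\partial_t\omega - r(t,x,\xi_0 + \partial_x\omega))\varphi \in S(\lambda^{1-\varepsilon}, g_\delta)$. The next level is $S(\lambda^\delta, g_\delta)$, since $\delta = (1+2\varepsilon)/3 = 1 - \varepsilon - (2-5\varepsilon)/3$; it contains $D_t\varphi$ and $-\sum_j \partial_{\xi_j} r(t,x,\xi_0 + \partial_x\omega) D_{x_j}\varphi$, and I would keep in addition the term $q_0(t,x,\xi_0 + \partial_x\omega)\varphi$, which is only of size $\lambda^\varepsilon$ (strictly between the $j=1$ and $j=2$ levels as long as $\varepsilon < 1$) but is needed later for the transport equation. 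These four terms are~\eqref{exp1}.

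Then I would estimate the remainder, term by term. Every further term in the expansion of the $r$-part gains a factor $\lambda^{-(1-\delta-\varepsilon)} = \lambda^{-(2-5\varepsilon)/3}$: an extra $\xi$-derivative on a symbol of order $1-\varepsilon$ costs $\lambda^{\varepsilon-1}$ and is compensated either by an $x$-derivative falling on~$\varphi$ (a factor $\lambda^\delta$), or, for the terms carrying a factor $\lambda D_x^\alpha\omega$ with $|\alpha|\ge 2$, by the smallness of $D_x^2\omega$ on $\supp\varphi$. For the borderline second-order term this is the estimate~\eqref{exp0}: $\partial_\xi^2 r = \Cal O(\lambda^\varepsilon)$, $D_x^2\varphi = \Cal O(\lambda^{2\delta})$, and $D_x^2\omega(t,\cdot) = \Cal O(\lambda^{2\varepsilon}d)$ on $\supp\varphi$ --- by Taylor's formula at $x=0$, using $\partial_x\omega(t,0) = \partial_x^2\omega(t,0) = 0$ and $\omega \in S(\lambda^{-7\varepsilon}, g_{3\varepsilon})$ --- with $d \ls \lambda^{-\delta}$ there, which yields $\Cal O(\lambda^{2\delta+\varepsilon-1} + \lambda^{3\varepsilon-\delta})$; the two exponents coincide precisely because $\delta = (1+2\varepsilon)/3$, giving $\Cal O(\lambda^{(7\varepsilon-1)/3}) = \Cal O(\lambda^{2\delta+\varepsilon-1})$. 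The expansions coming from $q_0$ and $r_0$ (which start at orders $\varepsilon$ and $3\varepsilon-1$) and all higher-order terms are dominated by this, provided $\varepsilon < 1/7$ (so the error is $o(1)$) and $\delta + \varepsilon < 1$ (so the powers $\lambda^{1-\varepsilon-j(2-5\varepsilon)/3}$ strictly decrease in~$j$), which also establishes the claimed asymptotic expansion in $S(\lambda^{1-\varepsilon-j(2-5\varepsilon)/3}, g_\delta)$.

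The step I expect to be the main obstacle is not any single estimate but keeping all of them uniform in the index~$j$, i.e. in $\lambda = \lambda_j \to \infty$: this requires that the symbol-class memberships and bounds in Proposition~\ref{prepprop} and the construction of~$\omega$ in Section~\ref{eik} hold uniformly, and that the remainder in the cited composition theorem is uniform for families of parameter-dependent symbols, phases and amplitudes in the classes $S(m,g_\varepsilon)$. One must also check that the three length scales $\lambda^{-\varepsilon}$ (frequency localization in the normal form), $\lambda^{-3\varepsilon}$ (the phase) and $\lambda^{-\delta}$ (the amplitude support) are mutually compatible --- in particular that $|\partial_x\omega| \ll 1$ throughout $\supp\varphi$, so that the expansion genuinely only involves the region $|\xi| \cong \lambda$ where~\eqref{prepsymb} describes~$P^*$.
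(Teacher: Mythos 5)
Your proposal is correct and follows essentially the same route as the paper: the paper's argument for Proposition~\ref{expprop} is precisely the discussion preceding it --- apply the parameter-uniform version of the oscillatory expansion~\eqref{trevesexp}, keep the terms of order $\lambda^{1-\varepsilon}$, $\lambda^{\delta}$ and $\lambda^{\varepsilon}$ displayed in~\eqref{exp1}, and bound the borderline second-order term as in~\eqref{exp0} using $\partial_\xi^2 r = \Cal O(\lambda^{\varepsilon})$, $D_x^2\omega = \Cal O(\lambda^{2\varepsilon}d)$ and $d\ls\lambda^{-\delta}$ on $\supp\varphi$, with $\delta=(1+2\varepsilon)/3$ chosen exactly so that the two error exponents $2\delta+\varepsilon-1$ and $3\varepsilon-\delta$ coincide. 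The only quibble is cosmetic: placing $q_0\varphi$ above the $j=2$ level requires $\varepsilon<1/4$ rather than $\varepsilon<1$, but this is subsumed by the standing hypothesis $\varepsilon<1/7$.
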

  
In Section~\ref{transp} we shall choose ${\varepsilon} = 1/8 $ which gives ${\delta} = 5/12$,  $ (2 - 5{\varepsilon})/3 =
11/24$ and $(7{\varepsilon} -1)/3 = -1/24$, so we may take  ${\varrho} = 1/24$ in~\eqref{udef}.

\section{The eikonal equation}\label{eik}

Making the real part of the first term in the expansion~\eqref{exp} equal to zero gives the eikonal equation
\begin{equation}\label{eic}
\partial_t {\omega} -  \re s (t,x,\partial_x {\omega}) = 0 \qquad
{\omega}(0,x) \equiv 0 
\end{equation} 
where $s(t,x,\xi) = r(t,x,{\xi}_0 + \xi ) $. The imaginary part of the first term will be treated as a perturbation.
We shall solve the eikonal equation 
approximatively after scaling, since we solve the real part it will be similar to 
the argument in~\cite{de:lim}. 
We choose coordinates $ (t,x,\xi) $ on $ S^* \br ^n$ so that  $ \Gamma $ is given by~\eqref{gammaco}. We find that  $s \in S({\lambda}^{-{\varepsilon}}, g_{\varepsilon})$ when $|x| + |\xi|  \ls \lambda^{-\varepsilon} $  by
Proposition~\ref{prepprop}, and  we may assume 
that $L(t)\equiv \set  {(t,x,0,0)}$, $\forall\, t$, by Proposition~\ref{symplprop}.
But $ s$ is also in another symbol class by the following refinement
of Proposition~\ref{prepprop}.

\begin{prop}\label{locprepprop} 
Assuming  Propositions~\ref{prepprop} and~\ref{symplprop} we have 
$$
s\in
S({\lambda}^{-7{\varepsilon}}, {\lambda}^{6{\varepsilon}}(dt^2 + |dx| ^2) +
{\lambda}^{8{\varepsilon}} |d{\xi}|^2)
$$ 
when $|x| \ls {\lambda}^{-3\varepsilon}$,  $|{\xi}| \ls
{\lambda}^{-4{\varepsilon}}$  and $ t \in I $.
\end{prop}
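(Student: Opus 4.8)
The plan is to read Proposition~\ref{locprepprop} as a sharpening of the bound $s\in S(\lambda^{-\varepsilon},g_\varepsilon)$ already contained in Proposition~\ref{prepprop}: one keeps $s$ smooth in the phase variables but improves the weight from $\lambda^{-\varepsilon}$ to $\lambda^{-7\varepsilon}$, paying for it by shrinking the region to $|x|\ls\lambda^{-3\varepsilon}$, $|\xi|\ls\lambda^{-4\varepsilon}$ and by replacing $g_\varepsilon$ with the anisotropic metric $\lambda^{6\varepsilon}(dt^2+|dx|^2)+\lambda^{8\varepsilon}|d\xi|^2$. Since $s(t,x,\xi)=r(t,x,\xi_0+\xi)$, I would estimate $\partial_t^a\partial_x^b\partial_\xi^c s$ by Taylor expanding it in the phase variables $(x,\xi)$ around the curve $\Gamma=\set{(t,0;0,\xi_0):\ t\in I}$, keeping $t$ as a parameter, and then bounding the low-order Taylor coefficients by their known behaviour on $\Gamma$ and the higher coefficients and remainder by the crude bound $\partial^\alpha r=\Cal O(\lambda^{(|\alpha|-1)\varepsilon})$ coming from $r\in S(\lambda^{-\varepsilon},g_\varepsilon)$ (valid throughout the smaller region).

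First I would collect the data on $\Gamma$. By Propositions~\ref{prepprop} and~\ref{symplprop} one has on $\Gamma$ that $\re r\equiv 0$, $\nabla\re r\equiv 0$, $\partial_x^2\re r\equiv 0$ and $\im r\equiv 0$; since $\Gamma$ is the $t$-axis, $t$-differentiation preserves vanishing on $\Gamma$, so
$$
\partial_t^a\re r=\partial_t^a\partial_x\re r=\partial_t^a\partial_\xi\re r=\partial_t^a\partial_x^2\re r=\partial_t^a\im r=0\quad\text{on }\Gamma,\ \forall\,a.
$$
Beyond that, on $\Gamma$ we have $\partial_x\partial_\xi\re r=\Cal O(1)$, and, by conditions~\eqref{cond001} and~\eqref{cond002} as recorded in Proposition~\ref{symplprop}, $\nabla\im r=\Cal O(\lambda^{-14\varepsilon/3})$ and $\partial_{t,x}\nabla\im r=\Cal O(\lambda^{-4\varepsilon/3})$ (hence $\partial_x^2\im r$, $\partial_x\partial_\xi\im r=\Cal O(\lambda^{-4\varepsilon/3})$); the remaining second-order derivatives, notably $\partial_\xi^2\re r$ and $\partial_\xi^2\im r$, are only $\Cal O(\lambda^\varepsilon)$, from the crude bound.

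Next, for fixed $(a,b,c)$ I would Taylor expand $(x,\xi)\mapsto\partial_t^a\partial_x^b\partial_\xi^c s$ to order two around $0$: the coefficient of $x^\beta\xi^\gamma$ with $|\beta|+|\gamma|\le 2$ equals $\partial_t^a\partial_x^{b+\beta}\partial_\xi^{c+\gamma}r(t,0,\xi_0)$, and the remainder is homogeneous cubic with coefficients given by segment integrals of $\partial_t^a\partial_x^{b+\beta}\partial_\xi^{c+\gamma}r$ over $|\beta|+|\gamma|=3$ (the segment staying in the region). Using $|x^\beta\xi^\gamma|\ls\lambda^{-3\varepsilon|\beta|-4\varepsilon|\gamma|}$ and writing $B=b+|\beta|$, $C=c+|\gamma|$, the claim reduces to the single uniform estimate
$$
\partial_t^a\partial_x^B\partial_\xi^C r(t,0,\xi_0)=\Cal O\big(\lambda^{(-7+3a+3B+4C)\varepsilon}\big)\qquad\forall\,a,B,C\ge 0,
$$
since that makes every term $\Cal O(\lambda^{(-7+3a+3b+4c)\varepsilon})$, which is the assertion. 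Whenever $2a+2B+3C\ge 6$ — in particular if $B+C\ge 3$, hence for the cubic remainder — the crude bound already gives this; the finitely many remaining cases $B+C\le 2$ are checked from the list above. The tight contributions, all of size exactly $\lambda^{-7\varepsilon}$, are the quadratic terms $\partial_x\partial_\xi\re r\cdot x\xi$, $\tfrac12\partial_\xi^2\re r\cdot\xi^2$, the analogous quadratic term of $\im r$, and the cubic remainder $\Cal O(\lambda^{2\varepsilon})\cdot\Cal O(\lambda^{-9\varepsilon})$; all linear and pure $x^2$ contributions of $\re r$ vanish, while the linear, $x^2$ and $x\xi$ contributions of $\im r$ carry the extra gains $\lambda^{-14\varepsilon/3}$ or $\lambda^{-4\varepsilon/3}$ and are therefore negligible. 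Since every input — the normal form of Proposition~\ref{prepprop}, the coordinates of Proposition~\ref{symplprop}, conditions~\eqref{cond000}--\eqref{cond002} and the $S(\lambda^{-\varepsilon},g_\varepsilon)$ bound — is uniform in $j$, the conclusion follows uniformly.

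The main obstacle here is not conceptual but purely one of bookkeeping: one must pin down exactly which mixed derivatives of $\re r$ and $\im r$ vanish or are small on $\Gamma$ (and exploit that $t$-differentiation preserves vanishing on $\Gamma$), and then verify that the precise exponents $14/3$ and $4/3$ built into conditions~\eqref{cond10} and~\eqref{cond01}, together with the chosen sizes $\lambda^{-3\varepsilon}$ and $\lambda^{-4\varepsilon}$ of the region, make every one of the many — and several borderline — terms fit under the weight $\lambda^{-7\varepsilon}$.
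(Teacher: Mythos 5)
Your proposal is correct and follows essentially the same route as the paper's proof: reduce to the derivative bound $\lambda^{(3|\alpha|+4|\beta|-7)\varepsilon}$, observe that the crude $S(\lambda^{-\varepsilon},g_\varepsilon)$ estimate already covers all cases with $2|\alpha|+3|\beta|\ge 6$ (including the Taylor/Lagrange remainder), and check the finitely many low-order derivatives on $\Gamma$ using $r=|\nabla\re r|=\partial_x^2\re r=0$, $\nabla\im r=\Cal O(\lambda^{-14\varepsilon/3})$ and $\partial_{t,x}\nabla\im r=\Cal O(\lambda^{-4\varepsilon/3})$. Your version is just a more explicit bookkeeping of the same case analysis, and all the borderline terms you identify are verified correctly.
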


\begin{proof}
Since $s \in S({\lambda}^{-{\varepsilon}}, g_{\varepsilon})$ when $ |x| + |\xi| \ls \lambda^{-\varepsilon} $ by
Proposition~\ref{prepprop}, we find that 
\begin{equation}\label{sest}
| \partial_{t,x}^{\alpha}\partial_{\xi}^{\beta} s| \ls
{\lambda}^{(|{\alpha}| + |{\beta}| -1){\varepsilon}} \ls
{\lambda}^{(3|{\alpha}| + 4|{\beta}| -7){\varepsilon}} \qquad \text{}
\end{equation}
when $ |x| + |\xi| \ls \lambda^{-\varepsilon} $, if and only $|{\alpha}| + |{\beta}| -1 \le 3|{\alpha}| + 4|{\beta}|
-7$, i.e.,
\begin{equation*}
 2|{\alpha}| + 3|{\beta}| > 5
\end{equation*}
Thus, we only have to check the cases $|{\alpha}| + |{\beta}| \le 2$
and $|{\beta}| \le 1$. Since the Lagrange remainder term is in the symbol
class, we only have to check the derivatives at $x = {\xi} = 0$. Then we obtain~\eqref{sest} since $s(t,0,0) = 0$, $\partial s(t,0,0) = \Cal O({\lambda}^{^{-14\varepsilon/3}})$
by~\eqref{cond001}, $\partial_{t,x} \partial_{\xi} s(t,0,{\xi}_0) = \Cal O(1)$
and $\partial_{t,x}^2 s(t,0,0)  =
\Cal O({\lambda}^{-4\varepsilon/3})$ by~\eqref{cond000} and~\eqref{cond002}.
\end{proof}

Observe that the estimates for $ \partial \im s $ and $ \partial_{t,x}  \partial \im s $ at $\Gamma $ are better than the symbol estimates,  which will be important in the proof  of  Lemma~\ref{transpterm}.
Next,  we scale and put $(x, {\xi}) = ({\lambda}^{-3\varepsilon} y, 
{\lambda}^{-4\varepsilon}{\eta})$.  When $ |y|+ |\eta| \le c $ we find
\begin{equation}\label{fdef}
(y,\eta) \mapsto f(t,y,\eta) = {\lambda}^{7\varepsilon}s(t, {\lambda}^{-3\varepsilon} y, 
{\lambda}^{-4\varepsilon}{\eta})  \in C^\infty \qquad \text{}
\end{equation}
and $ y \mapsto \omega_0(t,y)  = {\lambda}^{7\varepsilon}\omega(t, {\lambda}^{-3\varepsilon} y )  \in C^\infty$ uniformly.  Then the eikonal equation~\eqref{eic} is
\begin{equation}\label{scaleic}
\partial_t \omega_0 - \re f(t,y, \partial_y \omega_0) \equiv 0 \qquad
\omega_0(0,y) = 0 
\end{equation} 
when $ |y| \le c $.
We can solve~\eqref{scaleic} by solving the Hamilton-Jacobi equations:
\begin{equation}\label{hamjac}
\left\{
\begin{aligned} 
&\partial_t y = -\partial_{\eta}\re f(t,y,{\eta})\\
&\partial_t{\eta} = \partial_{y}\re f(t,y,{\eta})
\end{aligned}
\right.
\end{equation}
with initial values $(y(0), {\eta}(0)) = (z,0)$. 
Since we have uniform bounds on $(y,\eta) \mapsto  f (t,y,\eta) $, we find that
\eqref{hamjac} has a uniformly 
bounded $C^\infty$ solution $(y(t),\eta(t))  $ 
if $(z,0)$ is uniformly bounded. 
By taking $ z$ derivatives of the equations, we find that $z \mapsto (y(t,z),\eta(t,z))  \in C^\infty$ uniformly. By \eqref{hamjac} we  find that $(\partial_t y,\partial_t \eta) $ is uniformly bounded, and by taking repeated $t $, $ z$ derivatives of  \eqref{hamjac} we find that  $\big (\partial_t^k\partial_z^\alpha y,\partial_t^k \partial_z^\alpha \eta\big )   =  \Cal O(\lambda^{3(k-1)\varepsilon})$.

Letting $\partial_y \omega_0(t,y(t,z))
= {\eta}(t,z) $ and  $\partial_t \omega_0(t,y(t,z)) = \re f(t,y(t,z), \eta(t,z)) = \Cal O(1)$ when $|y| \le c $, we obtain the solution $ \omega_0(t,y) \in S(1, \lambda^{6\varepsilon}dt^2 + |dy|^2)$ to~\eqref{scaleic}. (Actually, we have $\partial_t \omega_0 \in S(1, \lambda^{6\varepsilon}dt^2 + |dy|^2)$.)
Since $\nabla \re f = 0$ on ${\Gamma}$ we find by uniqueness that $y = \eta = 0 $ when $z=0 $ which  
gives $\omega_0(t,0)  \equiv \partial_t \omega_0(t,0) \equiv \partial_y \omega_0(t,0) \equiv 0$. Since $
\partial_{y,\eta} \re f(t,0,0) = \partial_y^2 \re f(t,0,0) = 0$ we find by differentiating~\eqref{scaleic} twice that
\begin{multline*}
\partial_t \partial_y^2 \omega_0(t,0) = 
\partial_y \partial_{\eta} \re f(t,0,0)\partial_y^2 \omega_0(t,0) 
+ \partial_y^2 \omega_0(t,0)  \partial_{\eta}\partial_y \re f(t,0,0) \\ 
+   \partial_{y}^2 
\omega_0(t,0) \partial_{\eta}^2 \re f(t,0,0) \partial_{y}^2 \omega_0(t,0) 
\end{multline*}
Since $\partial_x^2{\omega}(0,x)\equiv 0$ 
we find by uniqueness that $\partial_x^2  
{\omega}(t,0) \equiv 0$. 

In the original coordinates we find that
that if $x(0) = \Cal O({\lambda}^{-3\varepsilon})$ and ${\xi}(0) = 0$
then  $x(t,x_0)=
\Cal O({\lambda}^{-3\varepsilon})$ and ${\xi}(t,x_0)  = \Cal
O({\lambda}^{-4\varepsilon})$ for any ~$t\in I$. 
The scaling also gives that
\begin{equation}\label{omegadef}
\omega(t,x) = \lambda^{-7\varepsilon} \omega_0(t, \lambda^{3\varepsilon}x) 
\in S({\lambda}^{-7{\varepsilon}}, g_{3{\varepsilon}}) \qquad |x| \ls \lambda^{-3\varepsilon}
\end{equation}
and we have $ \omega(t,0)\equiv \partial_x \omega(t,0) \equiv \partial_x^2 \omega(t,0) \equiv 0 $. (Actually, $\partial_t \omega(t,x)  \in S({\lambda}^{-7{\varepsilon}}, g_{3{\varepsilon}})$ when $|x| \ls \lambda^{-3\varepsilon} $.)
By the symbol estimates, we find $ \partial \omega(t,x) = \Cal O(\lambda^{-4\varepsilon})$ when  $|x| \ls \lambda^{-3\varepsilon} $.
Thus, we obtain the following result.

\begin{prop}\label{omegalem}
Let\/ $0 < {\varepsilon} < 1/3$, and assume that
Propositions~\ref{prepprop} and ~\ref{symplprop} hold.
Then there exists a real ${\omega}(t,x) \in
S({\lambda}^{-7{\varepsilon}}, 
g_{3{\varepsilon}})$ satisfying $\partial_t {\omega} =
\re r(t,x,{\xi}_0 + \partial_x {\omega})$ when $|x| \ls
{\lambda}^{-3{\varepsilon}}$  and $t \in I$ such that ${\omega}(t,0)
\equiv  \partial_x{\omega}(t,0) \equiv \partial_x^2{\omega}(t,0) \equiv 0$.
If $ {3\varepsilon} \le \delta \le  {4\varepsilon}  $ we  find that the values of\/
$(t,x;  {\lambda}\partial_{t}{\omega}(t,x), {\lambda}({\xi}_0 +
\partial_{x}{\omega}(t,x)))$ have homogeneous distance  $\ls {\lambda}^{-\delta} $ to the rays through\/ ${\Gamma}$
when $|x| \ls {\lambda}^{-\delta}$ and $t \in I$.
\end{prop}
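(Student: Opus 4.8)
The plan is to solve the eikonal equation \eqref{eic} after the scaling $(x,\xi) = ({\lambda}^{-3\varepsilon}y, {\lambda}^{-4\varepsilon}\eta)$, reducing it to the equation \eqref{scaleic} for $\omega_0(t,y) = {\lambda}^{7\varepsilon}\omega(t,{\lambda}^{-3\varepsilon}y)$, which by Proposition~\ref{locprepprop} and~\eqref{fdef} involves the uniformly $C^\infty$ function $f(t,y,\eta)$ on $|y|+|\eta| \le c$. First I would solve the Hamilton--Jacobi system \eqref{hamjac} with data $(y(0),\eta(0)) = (z,0)$; since $f$ has uniform $C^\infty$ bounds in $(y,\eta)$ this gives a uniformly bounded $C^\infty$ flow $(y(t,z),\eta(t,z))$ for $|z| \le c$, and differentiating the system in $t$ and $z$ gives the bounds $\bigl(\partial_t^k\partial_z^\alpha y, \partial_t^k\partial_z^\alpha\eta\bigr) = \Cal O({\lambda}^{3(k-1)\varepsilon})$, reflecting that each $t$-derivative costs a factor ${\lambda}^{3\varepsilon}$ from the scaling of $f$. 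Then I would define $\omega_0$ along the flow by $\partial_y\omega_0(t,y(t,z)) = \eta(t,z)$ and $\partial_t\omega_0(t,y(t,z)) = \re f(t,y(t,z),\eta(t,z))$, check consistency, and conclude $\omega_0 \in S(1, {\lambda}^{6\varepsilon}dt^2 + |dy|^2)$, hence after rescaling $\omega(t,x) = {\lambda}^{-7\varepsilon}\omega_0(t,{\lambda}^{3\varepsilon}x) \in S({\lambda}^{-7\varepsilon}, g_{3\varepsilon})$ when $|x| \ls {\lambda}^{-3\varepsilon}$.

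Next I would verify the vanishing conditions on ${\Gamma}$. Since $\nabla\re f = 0$ on ${\Gamma}$ (coming from $r = |\nabla\re r| = 0$ on ${\Gamma}$ via Proposition~\ref{prepprop}, together with $L(t) \equiv \{(t,x,0,0)\}$ from Proposition~\ref{symplprop}), uniqueness of solutions to \eqref{hamjac} forces $y(t,0) \equiv \eta(t,0) \equiv 0$, giving $\omega_0(t,0) \equiv \partial_t\omega_0(t,0) \equiv \partial_y\omega_0(t,0) \equiv 0$. For the second-order vanishing I would differentiate \eqref{scaleic} twice in $y$ at $y = 0$, obtaining a linear Riccati-type ODE for $\partial_y^2\omega_0(t,0)$ whose coefficients $\partial_y\partial_\eta\re f(t,0,0)$, $\partial_\eta\partial_y\re f(t,0,0)$, $\partial_\eta^2\re f(t,0,0)$ are controlled; crucially $\partial_y^2\re f(t,0,0) = 0$ (this is exactly the grazing Lagrangean condition $\partial_x^2\re r = 0$ from Proposition~\ref{symplprop} after scaling), so the equation is homogeneous in $\partial_y^2\omega_0(t,0)$, and since the initial value $\partial_x^2\omega(0,x) \equiv 0$ translates to $\partial_y^2\omega_0(0,0) = 0$, uniqueness yields $\partial_y^2\omega_0(t,0) \equiv 0$, hence $\partial_x^2\omega(t,0) \equiv 0$.

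Finally I would translate the flow bounds back to the original coordinates: $x(0) = \Cal O({\lambda}^{-3\varepsilon})$, $\xi(0) = 0$ propagates to $x(t,x_0) = \Cal O({\lambda}^{-3\varepsilon})$, $\xi(t,x_0) = \Cal O({\lambda}^{-4\varepsilon})$, and the symbol estimates give $\partial\omega(t,x) = \Cal O({\lambda}^{-4\varepsilon})$ on $|x| \ls {\lambda}^{-3\varepsilon}$. The last assertion about the homogeneous distance of $(t,x;{\lambda}\partial_t\omega,{\lambda}(\xi_0 + \partial_x\omega))$ to the rays through ${\Gamma}$ follows because on $\supp\varphi$ we have $|x| \ls {\lambda}^{-\delta}$, and since $\partial_t\omega = \partial_t\omega(t,0) + \Cal O({\lambda}^{-7\varepsilon}{\lambda}^{3\varepsilon}|x|) = \Cal O(|x|)$ by the vanishing at $x = 0$ and the $g_{3\varepsilon}$-symbol bound, while $\partial_x\omega = \Cal O(|x|)$ similarly (using $\partial_x\omega(t,0) = \partial_x^2\omega(t,0) = 0$), the deviation from ${\Gamma}$ in both the $x$-fiber and the $\xi$-direction is $\Cal O({\lambda}^{-\delta})$ when $3\varepsilon \le \delta \le 4\varepsilon$. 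The main obstacle is keeping careful track of the powers of ${\lambda}$ through the scaling: the loss of ${\lambda}^{3\varepsilon}$ per $t$-derivative and the asymmetry between the $dt^2 + |dy|^2$ metric for $\omega_0$ and the anisotropic metric of Proposition~\ref{locprepprop} for $f$ must be reconciled so that $\omega$ lands precisely in $S({\lambda}^{-7\varepsilon}, g_{3\varepsilon})$ and not a worse class; the vanishing arguments themselves are routine uniqueness-of-ODE arguments once the grazing condition $\partial_x^2\re r = 0$ is in hand.
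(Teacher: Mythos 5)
Your proposal follows essentially the same route as the paper's argument in Section~\ref{eik}: the anisotropic scaling to $\omega_0$ and $f$, solving the Hamilton--Jacobi system \eqref{hamjac} with data $(z,0)$, the uniqueness argument from $\nabla\re f=0$ on $\Gamma$ for the first-order vanishing, and the homogeneous Riccati equation (using $\partial_y^2\re f(t,0,0)=0$ from the grazing condition) for $\partial_x^2\omega(t,0)\equiv 0$, followed by undoing the scaling and reading off the homogeneous distance from the Taylor expansion at $x=0$. The details and the $\lambda$-bookkeeping match the paper's proof, so the proposal is correct.
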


\section{The transport equations}\label{transp}

The next term in~\eqref{exp} is the transport equation, which by homogeneity is
equal to
\begin{equation}\label{trans}
D_p {\varphi}  +
 {q_0}{\varphi} + i r_0 \varphi = 0 \qquad \text{at ${\Gamma} = \set{(t,0;0,\xi_0): \  t \in I}$}
\end{equation}
where $D_p = D_t - \sum_{j}\partial_{{\xi}_j} r(t,x,{\xi}_0 +\partial_x
{\omega(t,x)}) D_{x_j}$
\begin{equation}\label{key}
r_0(t,x) = \lambda \im r(t,x,\xi_0 + \partial_x \omega(t,x)) 
\end{equation}
and 
\begin{equation}\label{q0def}
 q_0(t) \cong D_t
  |\nabla p(t,0,{\xi}_0)|/2|\nabla p(t,0,{\xi}_0)| + {p_0}(t,0,{\xi}_0)/|\nabla
  p(t,0,{\xi}_0)| = \Cal O({\lambda}^{\varepsilon}) 
\end{equation}
modulo $\Cal O({\lambda}^{-8{\varepsilon}/3} + {\lambda}^{2{\varepsilon}}|x|)$ when $|x| \ls
{\lambda}^{-\varepsilon}$ by~\eqref{symb}. Here the real valued $ {\omega}(t,x) \in
S({\lambda}^{-7{\varepsilon}}, 
g_{3{\varepsilon}}) $ is given
by Proposition~\ref{omegalem}. Since the transport equation is given by a complex vector field, the treatment is different to the one in~\cite{de:lim}. But essentially we shall treat the complex part of the transport equation as a perturbation.

\begin{lem}\label{translemma}
If $3{\varepsilon} \le  {\delta} \le 7{\varepsilon}/2$ then we have that  
\begin{equation*}
 D_p = D_t + \sum_{j}^{} \w{a_j(t)\cdot x} D_{x_j} + R(t,x,D) 
\end{equation*}
where $ a_j(t) \in C^\infty(\br, \br^{n-1})$ uniformly, $ \forall\, j$, and $R(t,x,D)$ is a first order
differential operator in $x$ with 
coefficients that are $\Cal O({\lambda}^{3{\varepsilon}-2{\delta}})$ when $|x|
\ls {\lambda}^{-{\delta}}$.
\end{lem}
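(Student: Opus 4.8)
The plan is to expand the vector field $D_p = D_t - \sum_j \partial_{\xi_j} r(t,x,\xi_0 + \partial_x\omega(t,x)) D_{x_j}$ by Taylor's formula in $x$ around the semibicharacteristic $\Gamma = \set{(t,0;0,\xi_0):\ t\in I}$, separating the zeroth order term (which vanishes), the linear term (which gives the $\w{a_j(t)\cdot x}$ part), and the quadratic remainder $R(t,x,D)$. First I would recall from Proposition~\ref{symplprop} that in the chosen symplectic coordinates $\partial_x\re r = \partial_x^2\re r = 0$ on $\Gamma$, and from Proposition~\ref{omegalem} that $\omega(t,0)\equiv\partial_x\omega(t,0)\equiv\partial_x^2\omega(t,0)\equiv 0$, so that on $\Gamma$ we have $\xi_0 + \partial_x\omega(t,0) = \xi_0$, hence $\partial_{\xi_j}r(t,0,\xi_0) = 0$ (since $r$ is constant in $\tau$ and $\nabla\re r = 0$ on $\Gamma$, while $\nabla\im r = \Cal O(\lambda^{-14\varepsilon/3})$, which after multiplication by the size of the coefficient will be absorbed). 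This shows the zeroth order term in $x$ of the coefficient of $D_{x_j}$ is $\Cal O(\lambda^{-14\varepsilon/3})$, which must be checked to be $\Cal O(\lambda^{3\varepsilon - 2\delta})$ under the hypothesis $3\varepsilon\le\delta\le 7\varepsilon/2$; indeed then $3\varepsilon - 2\delta \ge 3\varepsilon - 7\varepsilon = -4\varepsilon > -14\varepsilon/3$, so it fits.

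Next I would identify the linear term. Writing $b_j(t,x) = \partial_{\xi_j}r(t,x,\xi_0 + \partial_x\omega(t,x))$, the chain rule gives
\begin{equation*}
\partial_{x_k} b_j(t,0) = \partial_{x_k}\partial_{\xi_j} r(t,0,\xi_0) + \sum_\ell \partial_{\xi_\ell}\partial_{\xi_j} r(t,0,\xi_0)\,\partial_{x_k}\partial_{x_\ell}\omega(t,0).
\end{equation*}
The second summand vanishes because $\partial_x^2\omega(t,0) \equiv 0$; for the first summand, $\partial_{x}\nabla\re r = \Cal O(1)$ on $\Gamma$ by Proposition~\ref{symplprop} (the bound $\partial_{t,x}\nabla\re r = \Cal O(1)$), and $\partial_x\nabla\im r = \Cal O(\lambda^{-4\varepsilon/3})$, so $\partial_{x_k}\partial_{\xi_j}r(t,0,\xi_0)$ is uniformly bounded. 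Hence I would set $a_j(t) = \bigl(-\partial_{x_k}b_j(t,0)\bigr)_k \in C^\infty(\br,\br^{n-1})$ uniformly, so that $-\sum_j b_j(t,x)D_{x_j} = \sum_j \w{a_j(t)\cdot x} D_{x_j} + R(t,x,D)$ with $R$ collecting the second order Taylor remainder.

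Finally I would estimate the remainder. The coefficient of $D_{x_j}$ in $R$ is $-b_j(t,x) - b_j(t,0) + \w{a_j(t)\cdot x} = \Cal O(\sup|\partial_x^2 b_j|\cdot|x|^2)$ by Taylor's formula with the quadratic remainder (plus the already-handled zeroth order $\Cal O(\lambda^{-14\varepsilon/3})$ discrepancy from $b_j(t,0)$, which I would fold into $R$'s coefficients or note is lower order). From Proposition~\ref{prepprop}, $r \in S(\lambda^{1-\varepsilon},g_\varepsilon)$ restricted appropriately, so second $x$-derivatives of $b_j$, composed with $\partial_x\omega \in S(\lambda^{-4\varepsilon},\cdot)$ whose derivatives cost $\lambda^{\le 3\varepsilon}$ powers via $g_{3\varepsilon}$, are $\Cal O(\lambda^{\varepsilon})$-ish after homogeneity normalization; on $\supp\varphi$ where $|x| \ls \lambda^{-\delta}$ this gives $\Cal O(\lambda^{\varepsilon}\lambda^{-2\delta})$, and more carefully $\Cal O(\lambda^{3\varepsilon - 2\delta})$ once one accounts that the relevant second-derivative bound picks up at most $\lambda^{3\varepsilon}$ from the $\omega$-dependence. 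The main obstacle is bookkeeping the interplay of the three metrics $g_\varepsilon$, $g_{3\varepsilon}$, $g_\delta$ and the chain-rule factors of $\partial_x\omega$ to confirm the exponent is exactly $3\varepsilon - 2\delta$ and not something worse; the hypothesis $\delta \le 7\varepsilon/2$ is precisely what keeps $\partial_x\omega \in S(\lambda^{-7\varepsilon},g_{3\varepsilon})$ controlled relative to the $g_\delta$ scale, and I expect that is the delicate inequality to verify.
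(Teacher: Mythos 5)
Your approach is the same as the paper's: Taylor expansion of the coefficients $\partial_{\xi_j}r(t,x,\xi_0+\partial_x\omega(t,x))$ at $x=0$, with the linear term supplying $a_j(t)$ and the zeroth-order and quadratic pieces absorbed into $R$. The ingredients you cite are the right ones ($\partial_x^2\omega(t,0)\equiv 0$, $\nabla\im r=\Cal O(\lambda^{-14\varepsilon/3})$ and $\partial_{t,x}\nabla\im r=\Cal O(\lambda^{-4\varepsilon/3})$ at $\Gamma$, $\partial_{t,x}\nabla\re r=\Cal O(1)$), and your check that the zeroth-order term fits under $\delta\le 7\varepsilon/2$ is exactly the binding inequality in the paper's proof (the paper bounds $\partial_\xi\im s(t,0,0)$ by $\lambda^{-4\varepsilon}$ and needs $-4\varepsilon\le 3\varepsilon-2\delta$).

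There is one genuine flaw: you define $a_j(t)=-\partial_xb_j(t,0)$ with the full complex coefficient, which makes $a_j$ complex-valued and contradicts the assertion $a_j\in C^\infty(\br,\br^{n-1})$; the realness of $a_j$ is essential downstream, since the change of variables $\partial_tz_j=\w{a_j(t),z}$ used to remove this term must be a real change of coordinates. The paper takes $a_j(t)=-\partial_x\partial_{\xi_j}\re s(t,0,0)$ only, and puts the entire imaginary part of $\partial_\xi s$ into $R$; the imaginary linear term is then $\Cal O(\lambda^{-4\varepsilon/3}|x|)=\Cal O(\lambda^{-4\varepsilon/3-\delta})\ls\lambda^{3\varepsilon-2\delta}$ since $\delta\le 7\varepsilon/2<13\varepsilon/3$, so it is harmless — but you must say this. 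Secondly, your remainder estimate is left as an expectation rather than a verification. The computation closes as follows: the dominant contribution is $\partial_\xi^2s(t,0,0)\,\partial_x\omega(t,x)$ (equivalently, in your chain-rule bookkeeping, $\partial_\xi\partial_{\xi_j}s\cdot\partial_x^3\omega\cdot|x|^2$), and since $\partial_\xi^2s=\Cal O(\lambda^{\varepsilon})$, $\partial^3s=\Cal O(\lambda^{2\varepsilon})$ and $\partial_x\omega(t,x)=\Cal O(\lambda^{2\varepsilon}|x|^2)$, one gets $\Cal O(\lambda^{3\varepsilon}|x|^2)=\Cal O(\lambda^{3\varepsilon-2\delta})$ on $|x|\ls\lambda^{-\delta}$; the pure Lagrange remainder in $(x,\xi)$ gives only $\Cal O(\lambda^{2\varepsilon}(|x|^2+\lambda^{4\varepsilon}|x|^4))$, which is smaller because $\delta\ge 3\varepsilon$. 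Your suggestion that $\delta\le 7\varepsilon/2$ is what controls $\partial_x\omega$ relative to the $g_\delta$ scale misplaces the role of that hypothesis — it is the zeroth-order imaginary term that forces it, as you in fact verified at the start.
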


\begin{proof}
As before we shall use the translation $s(t,x,\xi) = r(t,x,\xi_0 + \xi) $, then
\begin{equation}\label{rsymb}
 s(t,x, \xi) \in S(\lambda^{-\varepsilon}, g_\varepsilon) \bigcap S({\lambda}^{-7{\varepsilon}}, {\lambda}^{6{\varepsilon}}(dt^2 + |dx|^2) +
{\lambda}^{8{\varepsilon}} |d{\xi}|^2)  
\end{equation}
 when $|x| \ls {\lambda}^{-3\varepsilon}$,  $|{\xi}| \ls
{\lambda}^{-4{\varepsilon}}$  and $ t \in I $ by Proposition~\ref {omegalem}.
Since $\partial_x^2{\omega}(t,0) \equiv 0$ we find from Taylor's
formula that $a_j(t) = 
 - \partial_x \partial_{{\xi}_j} \re s(t,0,0) $ 
 which is uniformly bounded by~\eqref{cond000}.
The coefficients of the error term $R$ are given by $\partial_{\xi}\im
s$ and the second order Lagrange remainder term of the coefficients of
$\partial_{\xi}\re s$. By Propositions~\ref{prepprop},  \ref{symplprop}
and~\ref{omegalem} we find from Taylor's formula that  
\begin{multline*}
 \partial_{\xi} \im s(t,x, \partial_x {\omega}(t,x)) = \partial_{\xi} \im s(t,0,0) + \partial_x\partial_{\xi} \im s(t,0,0) x \\ + \partial_\xi ^2 \im s(t,0,0) \partial_x {\omega}(t,x) + \Cal O\big (\lambda^{2\varepsilon} (|x|^2 + \lambda^{4\varepsilon} |x|^4) \big) \\ =
  \Cal O \big({\lambda}^{-4{\varepsilon}} +
  {\lambda}^{-{\varepsilon}}|x| +  {\lambda}^{3{\varepsilon}}|x|^2 + \lambda^{-6\varepsilon} \big) =
  \Cal O({\lambda}^{3{\varepsilon}-2{\delta}})
\end{multline*}
when $|x| \ls {\lambda}^{-{\delta}}$ since $3 \varepsilon \le  \delta \le 7\varepsilon/2 $. In fact, $\partial_{\xi} \im
s = \Cal O({\lambda}^{-14{\varepsilon}/3})$ and 
$\partial_x\partial_{\xi} \im s = \Cal
O({\lambda}^{-4{\varepsilon}/3})$ at~$\Gamma  $, $\partial^2_\xi s = \Cal
O({\lambda}^{{\varepsilon}})$, $\partial^3 s = \Cal
O({\lambda}^{2{\varepsilon}})$ and $\partial_x {\omega}(t,x) = \Cal
O({\lambda}^{2{\varepsilon}}|x|^2)  = \Cal O(\lambda^{-4\varepsilon})$
when $|x| \ls {\lambda}^{-{\delta}}$ since  
${\delta} \ge 3{\varepsilon}$.
Similarly
we find that the second order Lagrange remainder term of the coefficients of
$\partial_{\xi}\re s$ are
$ 
\Cal O({\lambda}^{2{\varepsilon}}(|x|^2 +  {\lambda}^{4{\varepsilon}}|x|^4) )
= \Cal O({\lambda}^{2{\varepsilon}-2{\delta}})
$ 
when $|x| \ls {\lambda}^{-{\delta}} \ll {\lambda}^{-\varepsilon}$,
which proves the result.
\end{proof}

We also have to estimate the term $ r_0(t,x) =  {\lambda}\im r(t,x, \partial_x \omega(t,x)) $
which in fact is bounded according to the following lemma.

\begin{lem}\label{transpterm}
If $\varepsilon = 1/8  $ and $ \delta = (1 + 2 \varepsilon)/3 = 5/12$ then $r_0(t,x) \in S(1, g_{\delta}) $ for  $|x| \ls \lambda^{-\delta}  $ and $ t \in I $.
\end{lem}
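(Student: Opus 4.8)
The goal is to show that $r_0(t,x) = \lambda \im r(t,x,\xi_0 + \partial_x\omega(t,x))$ lies in $S(1,g_\delta)$ when $|x|\ls\lambda^{-\delta}$, with the specific choice $\varepsilon = 1/8$, $\delta = 5/12$. The plan is to use Taylor's formula for $\im r$ at $\Gamma$ (i.e.\ at $x=0$, $\xi = 0$ in the translated variables $s(t,x,\xi)=r(t,x,\xi_0+\xi)$) together with the sharp boundary estimates from Propositions~\ref{prepprop} and~\ref{symplprop}: on $\Gamma_j$ one has $\nabla\im s = \Cal O(\lambda^{-14\varepsilon/3})$ and $\partial_{t,x}\nabla\im s = \Cal O(\lambda^{-4\varepsilon/3})$, while the full symbol estimates $\partial^\alpha s = \Cal O(\lambda^{(|\alpha|-1)\varepsilon})$ control the higher-order Lagrange remainders. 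Combined with $\partial_x\omega = \Cal O(\lambda^{2\varepsilon}|x|^2) = \Cal O(\lambda^{-4\varepsilon})$ (valid since $\delta \ge 3\varepsilon$) and $\omega(t,0)\equiv\partial_x\omega(t,0)\equiv\partial_x^2\omega(t,0)\equiv 0$ from Proposition~\ref{omegalem}, one expands $\im s(t,x,\partial_x\omega(t,x))$ around $x=0$.

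First I would write the zeroth-order estimate: $\im s(t,0,0) = 0$, so the constant term vanishes. For the linear term in $x$, the $x$-derivative of $\im s(t,x,\partial_x\omega)$ at $x=0$ picks up $\partial_x\im s(t,0,0)$ (which is $\Cal O(\lambda^{-14\varepsilon/3})$) plus $\partial_\xi\im s(t,0,0)\cdot\partial_x^2\omega(t,0) = 0$; multiplying by $x = \Cal O(\lambda^{-\delta})$ and by $\lambda$ gives $\Cal O(\lambda^{1-14\varepsilon/3-\delta})$. The second-order term in $x$ involves $\partial_x^2\im s = \Cal O(\lambda^{2\varepsilon})$ (from the symbol class) times $|x|^2 = \Cal O(\lambda^{-2\delta})$, as well as cross terms with $\partial_\xi\im s$ and the $\Cal O(\lambda^{4\varepsilon}|x|^4)$ contribution from the quartic remainder of $\partial_x\omega$; after multiplying by $\lambda$ these are $\Cal O(\lambda^{1+2\varepsilon-2\delta})$ and smaller. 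Finally, for the derivatives of $r_0$ one differentiates the same expansion: each $x$-derivative costs a factor $\Cal O(\lambda^\delta)$, but one also gains because differentiating the linear-in-$x$ remainder in the $\partial_{t,x}$-direction gives the improved bound $\partial_{t,x}\nabla\im s = \Cal O(\lambda^{-4\varepsilon/3})$; so $\partial^\alpha r_0 = \Cal O(\lambda^{|\alpha|\delta})$ must be checked term by term, which is what $S(1,g_\delta)$ requires.

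The arithmetic then reduces to verifying that all exponents are $\le 0$ (for $r_0$ itself) and that the derivative estimates do not exceed $\lambda^{|\alpha|\delta}$. With $\varepsilon = 1/8$, $\delta = 5/12$: the linear term exponent is $1 - 14\varepsilon/3 - \delta = 1 - 7/12 - 5/12 = 0$; the quadratic term exponent is $1 + 2\varepsilon - 2\delta = 1 + 1/4 - 5/6 = 5/12 > 0$ — so this naive estimate fails, and one must instead exploit that the second-order Taylor coefficient in $x$ also benefits from an improved bound, or that the relevant second-order terms are controlled by $\partial_{t,x}\nabla\im s\restr L = \Cal O(\lambda^{-4\varepsilon/3})$ along the grazing Lagrangean (recall $\partial_x^2\re s\restr L = 0$ and the Lagrangean is $\{(t,x,0,0)\}$, so pure $x$-derivatives of $\im s$ at $\Gamma$ are $\Cal O(\lambda^{-4\varepsilon/3})$, not $\Cal O(\lambda^{2\varepsilon})$). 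Using $\partial_x^2\im s = \Cal O(\lambda^{-4\varepsilon/3})$ gives the quadratic term exponent $1 - 4\varepsilon/3 - 2\delta = 1 - 1/6 - 5/6 = 0$, which is acceptable; higher-order terms use the generic $\Cal O(\lambda^{k\varepsilon})$ bounds and are strictly negative for $\delta$ large compared to $\varepsilon$.

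The main obstacle is precisely this bookkeeping: keeping track of which Taylor coefficients of $\im s$ at $\Gamma$ satisfy the improved bounds $\Cal O(\lambda^{-14\varepsilon/3})$ (the gradient) and $\Cal O(\lambda^{-4\varepsilon/3})$ (the $\partial_{t,x}$-derivatives of the gradient, equivalently the second derivatives along the Lagrangean directions $t$ and $x$), versus which ones only satisfy the generic symbol bounds $\Cal O(\lambda^{k\varepsilon})$, and then checking — after composition with $\partial_x\omega$ and after differentiating $r_0$ itself up to arbitrary order — that every resulting term is $\Cal O(\lambda^{|\alpha|\delta})$. Once the correct coefficient bounds are substituted, the exponent inequalities are tight at exactly $\varepsilon = 1/8$, $\delta = 5/12$, which is why that specific choice is forced; the verification is then routine but must be carried out carefully for the leading terms. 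I would organize the proof by splitting $\im s = L_1 + L_2 + r_{\ge 3}$ into its Taylor terms in $x$ of orders $1$, $2$, and $\ge 3$ at $\Gamma$, bounding $\lambda$ times each after substituting $\xi = \partial_x\omega(t,x)$, and then differentiating each piece.
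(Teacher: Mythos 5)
Your proposal is correct and follows essentially the same route as the paper: the paper first rescales $(t,x,\xi)=(\lambda^{-3\varepsilon}s,\lambda^{-3\varepsilon}y,\lambda^{-4\varepsilon}\eta)$ and then performs exactly your Taylor split of $\im f(s,y,\partial_y\omega_0)$ into the order $1$, order $2$ and order $\ge 3$ pieces, using the improved bounds $\nabla \im r = \Cal O(\lambda^{-14\varepsilon/3})$ and $\partial_{t,x}\nabla\im r\restr L = \Cal O(\lambda^{-4\varepsilon/3})$ for the low-order coefficients and the generic symbol bounds for the remainder, with the same tight exponent identities at $\varepsilon=1/8$, $\delta=5/12$. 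The only cosmetic slips are the naive bound $\partial_x^2\im s=\Cal O(\lambda^{2\varepsilon})$ (the symbol class actually gives $\Cal O(\lambda^{\varepsilon})$, which still fails, so nothing changes) and the claim that the higher-order terms are ``strictly negative'' (the cubic remainder is exactly borderline, $\Cal O(\lambda^{0})$, which is still admissible for $S(1,g_\delta)$).
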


Observe that we need that $ \varepsilon < 1/7 $ and $ \delta = (1 + 2 \varepsilon)/3 $ in order to use the expansion of Proposition~\ref{expprop}, and when $\varepsilon = 1/8  $ we get $ \delta = 5/12 = 10\varepsilon/3 < 7\varepsilon/2$.

\begin{proof}
As before  we shall  use scaling $(t, x, {\xi}) = ({\lambda}^{-3\varepsilon}s,  {\lambda}^{-3\varepsilon}y , 
{\lambda}^{-4\varepsilon}{\eta})$, and write  $ f(s,y,\eta) = {\lambda}^{7\varepsilon}r(t,x,\xi_0 + \xi)  \in C^\infty $  and $\omega_0(s,y) = \lambda^{7\varepsilon}\omega(t,x)   \in C^\infty$   uniformly so that $ \partial_y \omega_0(s,y) =\lambda^{4\varepsilon} \partial_x \omega(t, x) $ when $ |x | \le c {\lambda}^{-3\varepsilon}$ and $ t \in I $, which we shall assume in the following. 
 
This gives 
\begin{equation}\label{error}
r_0(t,x) = {\lambda}^{1 -7\varepsilon} \im f(s,y, \partial_y \omega_0(t,y))
\end{equation}
and we shall show that 
\begin{equation*}
F(s,y) = \im f(s, y, \partial_y \omega_0(t, y)) \in S(\lambda^{-\varepsilon}, g_{\varrho}) \qquad \text{when $ |y| \le c \lambda^{-\varrho}$} 
\end{equation*}
where $ \varrho = \delta - 3 \varepsilon = \varepsilon/3$. Since $\varepsilon =  1/8  $, this will give the result.  Taylor's formula gives
\begin{multline}\label{Fexp}
F(s,y) = \partial_y \im f(s,0,0)y  + \w{\partial_y^2 \im f(s,0,0)y,y}/2  \\ + \partial_\eta \im f(s,0,0) \w{\partial_y^3\omega_0(s ,0)y,y}/2 +  R(s ,y) \qquad  |y| \le c \lambda^{-\varrho} 
\end{multline}
where $ R(s,y) \in C^\infty $ uniformly and vanishes of order $ 3 $ at $ y=0 $
since  $ f(s, 0,0) = \partial_y \omega_0(s,0) =  \partial_y^2\omega_0(s,0) = 0$ when $ t \in I $ by Propositions~\ref{symplprop} and~\ref{omegalem}.
Thus 
$$ 
R(s ,y) = \Cal O( |y|^3) = \Cal O(\lambda^{-3\varrho}) = \Cal O(\lambda^{-\varepsilon}) \quad \text{ when $ |y| \le c \lambda^{-\varrho} $ }
$$  
since $ \varrho =  \varepsilon/3 $. Now one loses at most a factor $y =  \Cal O(\lambda^{-\varrho})  =  \Cal O(\lambda^{-\varepsilon/3}) $ when taking a derivative of $R(s,y)$, giving a factor $  \Cal O(\lambda^{\varrho}) $, so $ R(s,y) \in S({\lambda}^{-\varepsilon}, g_{\varrho})$. 

It remains to consider the first three terms in~\eqref{Fexp} and as before it suffices to consider derivatives of order less than $ 3 $ at~$ y = 0 $.
Since $\partial_y^3\omega_0(s ,0) \in C^\infty$ uniformly we only have to estimate
$ \partial_\eta \im f(s,0,0)$ 
and $ \partial_{s,y}^k  \im f(s,0,0)$  when $ k \le 2 $. We obtain from~\eqref{cond001} that
$$ 
\partial_\eta \im f(s,0,0) = \lambda^{3\varepsilon} \partial_\xi \im r(t,0,\xi_0) =   \Cal O(\lambda^{-5\varepsilon/3})  =   \Cal O(\lambda^{-\varepsilon + 2\varrho})  
$$ 
Similarly, \eqref{cond001} gives
$$ \partial_{s,y} \im f(s,0,0) = \lambda^{4\varepsilon} \partial_{t,x} \im r(t,0,\xi_0)  =   \Cal O(\lambda^{-2\varepsilon/3})  =   \Cal O(\lambda^{-\varepsilon + \varrho})
$$ 
and~\eqref{cond002} gives that $\partial_{s,y}^2 \im f(s,0,0)  = \lambda^{\varepsilon} \partial_{t,x}^2 \im r (t,0,\xi_0)  = \Cal O(\lambda^{-\varepsilon/3})  =   \Cal O(\lambda^{-\varepsilon + 2 \varrho})$. 
\end{proof}

By a change of $ t $ variable we may assume that~\eqref{c3} and~\eqref{mcond2}
hold with the integration starting at $ t=0 $.
We obtain new variables $z$ in $\br^{n-1}$ by solving
\begin{equation*}
 \partial_t z_j = \w{a_j(t), z} \qquad z_j(0) = x_j \qquad \forall\, j
\end{equation*}
Then $D_t + \sum_{j}^{} \w{a_j(t), x} D_{x_j}$ is transformed into
$D_t$ but $D_{x_j} = D_{z_j}$ is unchanged, and we will for simplicity keep the notation $(t,x)$. 
The linear change of variables is uniformly bounded since $a_j \in C^\infty$, so
it preserves the neighborhoods $|x| \ls {\lambda}^{-{\nu}}$ and the
symbol classes $S({\lambda}^{\mu}, g_{\nu})$, $\forall\, {\mu},\, {\nu}$.
We shall then solve the approximate transport equation
\begin{equation}\label{mtrans}
 D_t{\varphi} + ({q_0}(t) + i r_0(t,x)){\varphi} = 0
\end{equation}
where ${\varphi}(0,x) \in S(1, g_{\delta})$ is supported where $|x| \ls
{\lambda}^{-{\delta}}$, $q_0(t)$ is given by~\eqref{q0def} and $ r_0 $ by~\eqref{key}.  
If we assume $ 3{\varepsilon} \le \delta \le 7\varepsilon/2$ then by
Lemma~\ref{translemma} the approximation errors $ R\varphi$ will be in 
$S({\lambda}^{3{\varepsilon}-{\delta}}, g_{{\delta}})$. In fact, since
$\partial_x$ maps $ S(1, g_{\delta})$ into $S({\lambda}^{{\delta}},
g_{\delta})$  we find $R(t,x,D_x){\varphi}_0
\in  S({\lambda}^{3{\varepsilon} - {\delta}}, g_{\delta})$  when $|x| \ls {\lambda}^{-{\delta}}$. 
We find from Proposition~\ref{wtpest} that $q_0 \in  S(\lambda^{\varepsilon} , g_{\varepsilon})$, and 
if $ \varepsilon =1/8 $ and $ \delta = (1 + 2 \varepsilon)/3 $ then we find from Lemma~\ref{transpterm} that $ r_0 \in S(1, g_\delta) $ when  $|x| \ls \lambda^{-\delta}  $ and $t \in I$.

If we choose the initial data
${\varphi}(0,x) = {\phi}_0(x)= {\phi}({\lambda}^{{\delta}}x)$,
where ${\phi} 
\in C^\infty_0$ satisfies ${\phi}(0) = 1$, we obtain the solution
\begin{equation}\label{vp}
 {\varphi}(t,x) =
 {\phi}_0(x) \exp(- i B(t,x)) 
\end{equation}
where $\partial_tB(t,x) = q_0(t) + i r_0(t,x)$ and $ B(0,x) = 0 $. 
We find that $ \exp(- i B(t,x))  \in S(1, g_{\delta})  $ uniformly
since condition~\eqref{c3} holds with $a_j \equiv 1$, $ \partial_{t} B(t,x) = q_0(t)  + i r_0(t,x) \in S(\lambda^{\varepsilon} , g_{\varepsilon}) + S(1, g_{\delta}) \subset S(\lambda^{\delta} , g_{\delta})$ and 
$$
\partial_{x} B(t,x) =  i\int_{0}^{t}   \partial_{x}r_0(s,x) \, ds \in S(\lambda^{\delta} , g_{\delta})
$$
by Proposition~\ref{wtpest} and Lemma~\ref{transpterm}. 
Thus $ \varphi \in S(1, g_\delta)$ uniformly and we find by~\eqref{vp}  that 
$ |{\varphi}(t,x)| \le  C|{\phi}({\lambda}^{{\delta}}x)|$ so $|x| \ls
{\lambda}^{-{\delta}}$ in $\supp {\varphi}$, which also holds in 
the original $x$ coordinates.

After solving the eikonal equation and the approximate transport
equation, we find from Proposition~\ref{expprop} that the terms in the
expansion~\eqref{exp} 
are $ \Cal
O({\lambda}^{3{\varepsilon}-\delta})$ if  ${\varepsilon} < 1/7$
and ${\delta} = (1+2{\varepsilon})/3 $,  
and all the terms
contain the factor $ \exp(- i B(t,x))$. We take ${\varepsilon}  =
1/8$ and ${\delta} = 5/12$ which gives
$ 3\varepsilon - \delta = -1/24 > -\varepsilon/2$ so $3 \varepsilon < \delta < 7\varepsilon/2 $. Then the expansion
in Proposition~\ref{expprop} is in
multiples of ~${\lambda}^{-1/24}$, and since the error terms 
of~\eqref{exp1} are $\Cal O({\lambda}^{-1/24})$
we will take ${\varrho} = 1/24$ and $ \varphi_0 = \varphi $
in the definition of $ u_\lambda $ given by~\eqref{udef}.

The approximate transport equation for ${\varphi}_k$ in~\eqref{udef}, $ k > 0$, is
\begin{equation}\label{gentrans}
D_t{\varphi}_k + ({q_0}(t) + i r_0(t,x)){\varphi}_k = {\lambda}^{k/24} R_k
\exp(i B(t,x))  \qquad k \ge 1
\end{equation}
with $R_k$   is
uniformly bounded in the symbol class $S({\lambda}^{-k/24},
g_{5/12})$ and  is  supported where $|x| \ls  {\lambda}^{-{5/12}}$.
In fact, $R_k$ contains the error terms from the transport
equation~\eqref{trans}
and also the terms that are $\Cal O({\lambda}^{-k/24})$ in \eqref{exp} depending on~${\varphi}_{j}$ for $j < k $.
Taking
 ${\varphi}_k =  \exp(-i B(t,x)) {\phi}_k$ we obtain the equation 
\begin{equation}\label{transk}
 D_t{\phi}_k = {\lambda}^{k/24} R_k \in S(1,
 g_{5/12})
\end{equation}
with initial values ${\phi}_k(0,x) = 0$,
which can be solved with ${\phi}_k \in 
 S(1, g_{5/12})$ uniformly having support where  $|x| \ls
{\lambda}^{-{5/12}}$. Since $ \exp(-i B(t,x)) \in 
S(1, g_{5/12}) $ uniformly we find that ${\varphi}_k \in 
S(1, g_{5/12})$ uniformly having support where  $|x| \ls
{\lambda}^{-{5/12}}$.
Proceeding by induction we obtain a solution to~\eqref{exp} modulo $\Cal
O({\lambda}^{-N/24})$ for any~ $N$.

\begin{prop}\label{transprop}
Assuming Propositions~\ref{prepprop} and ~\ref{symplprop} and
choosing ${\varepsilon} = 1/8$, ${\delta} = 5/12$  and ${\varrho} =
1/24$ we can solve the transport equations~\eqref{mtrans}
and~\eqref{gentrans} with ${\varphi}_k \in S(1,g_{5/12})$ 
having support where  $|x| \ls {\lambda}^{-{5/12}}$,
such that ${\varphi}_0(0,0) = 1$ and ${\varphi}_k(0,x)
\equiv 0$, $k \ge 1$. 
\end{prop}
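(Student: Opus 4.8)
The plan is to reduce both~\eqref{mtrans} and~\eqref{gentrans} to ordinary differential equations along the $t$‑lines and to solve them by an explicit integrating factor. First I would fix the parameters $\varepsilon = 1/8$, $\delta = 5/12$, $\varrho = 1/24$ and record that they satisfy $3\varepsilon < \delta < 7\varepsilon/2$, $\delta = (1+2\varepsilon)/3$ and $\varepsilon < 1/7$, so that the expansion of Proposition~\ref{expprop} is available, Lemma~\ref{translemma} applies, and the error terms produced in~\eqref{exp} once the eikonal equation is solved are $\Cal O(\lambda^{3\varepsilon-\delta}) = \Cal O(\lambda^{-1/24})$, all carrying the factor $\exp(-iB)$. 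By Lemma~\ref{translemma}, $D_p = D_t + \sum_j \langle a_j(t), x\rangle D_{x_j} + R(t,x,D)$ with $a_j \in C^\infty$ uniformly and $R$ a first‑order operator with coefficients $\Cal O(\lambda^{3\varepsilon-2\delta})$ for $|x| \ls \lambda^{-\delta}$. Solving $\partial_t z_j = \langle a_j(t), z\rangle$, $z_j(0) = x_j$ gives a uniformly bounded linear change of variables that straightens $D_t + \sum_j \langle a_j(t), x\rangle D_{x_j}$ into $D_t$, leaves $D_{x_j}$ unchanged, and preserves the neighborhoods $|x| \ls \lambda^{-\nu}$ and the classes $S(\lambda^\mu, g_\nu)$; after it, the equation to solve is exactly~\eqref{mtrans}.

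Next I would solve~\eqref{mtrans} explicitly. With initial data $\varphi(0,x) = \phi(\lambda^\delta x)$, $\phi \in C_0^\infty$, $\phi(0) = 1$, the solution is $\varphi = \phi_0(x)\exp(-iB(t,x))$ as in~\eqref{vp}, where $\partial_t B(t,x) = q_0(t) + i r_0(t,x)$, $B(0,x) = 0$; this gives $|\varphi(t,x)| \le C|\phi(\lambda^\delta x)|$, so $|x| \ls \lambda^{-\delta}$ on $\supp \varphi$, a coordinate‑independent statement. The substantive point is then that $\varphi \in S(1, g_\delta)$ uniformly: by Proposition~\ref{wtpest} one has $q_0 \in S(\lambda^\varepsilon, g_\varepsilon)$, and by Lemma~\ref{transpterm} (this is where $\varepsilon = 1/8$ and $\delta = (1+2\varepsilon)/3$ are used) one has $r_0 \in S(1, g_\delta)$ for $|x| \ls \lambda^{-\delta}$, so $\partial_t B \in S(\lambda^\delta, g_\delta)$ and $\partial_x B = i\int_0^t \partial_x r_0(s,x)\,ds \in S(\lambda^\delta, g_\delta)$, whence $\exp(-iB)$ has the required derivative bounds; and $|\exp(-iB)|$ stays bounded because condition~\eqref{c3} (with $a_j \equiv 1$) makes the accumulated imaginary part nonpositive, so that $\exp(-iB) \in S(1, g_\delta)$. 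I expect this step — simultaneously keeping $\exp(-iB)$ bounded and in the symbol class — to be the main obstacle, as it is precisely where the sign condition~\eqref{c3} and the sharp bound of Lemma~\ref{transpterm} both enter in an essential way.

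Finally I would run the induction over $k$. For $k \ge 1$, equation~\eqref{gentrans} has right‑hand side $\lambda^{k/24} R_k \exp(iB)$, where $R_k$ collects the approximation errors $R\varphi_{k-1}$ coming from Lemma~\ref{translemma} together with the $\Cal O(\lambda^{-k/24})$ terms of~\eqref{exp} depending on $\varphi_j$, $j < k$; using the already established regularity of $\varphi_0, \dots, \varphi_{k-1}$ one sees $R_k$ is uniformly bounded in $S(\lambda^{-k/24}, g_{5/12})$ with support in $|x| \ls \lambda^{-5/12}$. Writing $\varphi_k = \exp(-iB)\phi_k$ and dividing by the bounded symbol‑class factor $\exp(-iB)$ turns~\eqref{gentrans} into~\eqref{transk}, namely $D_t \phi_k = \lambda^{k/24} R_k \in S(1, g_{5/12})$ with $\phi_k(0,\cdot) = 0$, which is solved by integrating in $t$; this yields $\phi_k \in S(1, g_{5/12})$ uniformly with support in $|x| \ls \lambda^{-5/12}$, and multiplying back by $\exp(-iB) \in S(1, g_{5/12})$ gives $\varphi_k \in S(1, g_{5/12})$ with the same support. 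Since $\phi(0) = 1$ we get $\varphi_0(0,0) = 1$, and $\phi_k(0,\cdot) = 0$ gives $\varphi_k(0,x) \equiv 0$ for $k \ge 1$, which is the assertion; with $\varrho = 1/24$ the truncated series in~\eqref{udef} then solves~\eqref{exp} modulo $\Cal O(\lambda^{-N/24})$ for every $N$.
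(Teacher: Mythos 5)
Your proposal is correct and follows essentially the same route as the paper: straightening $D_t+\sum_j\w{a_j(t),x}D_{x_j}$ by the linear change of variables from Lemma~\ref{translemma}, solving~\eqref{mtrans} with the integrating factor $\exp(-iB)$ whose boundedness and membership in $S(1,g_\delta)$ rest on~\eqref{c3}, Proposition~\ref{wtpest} and Lemma~\ref{transpterm}, and then handling~\eqref{gentrans} inductively via $\varphi_k=\exp(-iB)\phi_k$ and~\eqref{transk}. No substantive differences from the paper's argument.
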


Now, we get localization in $x$ from the initial values and the
transport equation. To get
localization in $t$ we use that $\im B(t) \le C$ so that $\re (-i B) \le C$. Near $\partial {\Gamma}$ we may assume that
$\re (-iB(t))  \ll -\log {\lambda}$ in an interval of length
$\Cal O({\lambda^{-\varepsilon}}) = \Cal O({\lambda}^{-1/8})$
by~\eqref{mcond2}. 
Thus by applying a cut-off function 
$
{\chi}(t) \in S(1, {\lambda}^{1/4}dt^2) \subset S(1, g_{5/12})
$ 
such that ${\chi}(0) = 1$ and ${\chi}'(t)$ is supported
where~\eqref{mcond2}  holds, i.e., where 
${\varphi}_k = \Cal O({\lambda}^{-N})$, $\forall\, k$, we obtain a solution modulo $\Cal
O({\lambda}^{-N})$ for any $N$. In fact, if $u_{\lambda}$ is defined
by~\eqref{udef} and $Q$ by Proposition~\ref{prepprop} then
$ Q {\chi} u_{\lambda} = {\chi}Q u_{\lambda} + [Q, {\chi}]u_{\lambda} $
where $[Q, {\chi}] = D_t {\chi}$ is supported
where $u_{\lambda} = \Cal O({\lambda}^{-N})$ which gives terms that are $\Cal
O({\lambda}^{-N})$, $\forall\, N$. Thus, by solving the eikonal
equation~\eqref{eic} for ${\omega}$ and the transport
equations~\eqref{gentrans} for ${\varphi}_k$ for $k \le 24 N$, we obtain that $Q{\chi}
u_{\lambda} = \Cal O({\lambda}^{-N})$ for any~$N$ and we get the
following remark.

\begin{rem}\label{transrem}
In Proposition~\ref{transprop} we may assume that ${\varphi}_k(t,x) =
{\phi}_k({\lambda}^{5/12}t, 
{\lambda}^{5/12}x) \in S(1, g_{5/12})$, $k \ge 0$, with ${\phi}_k
\in C_0^\infty$ having
support where $|x| \ls 1$ and $|t| \ls 
{\lambda}^{5/12}$, $k \ge 0$.
\end{rem}

\section{The proof of Theorem \ref{mainthm}}\label{pfsect}  

For the proof we will need the following modification
of~\cite[Lemma 26.4.14]{ho:yellow} which is Lemma~{7.1} in ~\cite{de:lim}. Recall that $\Cal D'_{{\Gamma}} =
\set{u \in \Cal D': \wf (u) \subset {\Gamma}}$ for ${\Gamma} \subset
T^*\br^n$, and that
$\mn{u}_{(k)}$ is the $L^2$ Sobolev norm of order $k$ of $u \in
C_0^\infty$.

\begin{lem}\label{estlem}
Let 
\begin{equation}\label{estlem0}
 u_{\lambda}(x) =  {\lambda}^{(n-1){\delta}/2}\exp(i{\lambda}^{\varrho}
 {\omega}({\lambda}^{\varepsilon}x)) 
 \sum_{j=0}^M 
 {\varphi}_{j} ({\lambda}^{{\delta}}x){\lambda}^{-j{\kappa}}
\end{equation}
with ${\omega} \in C^\infty (\br^n)$ satisfying $\im {\omega} \ge 0$
and $|d {\omega}| \ge c > 0$, ${\varphi}_j \in C^\infty_0(\br^n)$,
${\lambda} \ge 1$, ${\varepsilon}$, ${\delta}$, ${\kappa}$ and
${\varrho}$ are positive such that $\varepsilon < {\delta} < {\varepsilon} +
{\varrho}$. Here ${\omega}$ and 
${\varphi}_j$ may depend on ${\lambda}$ but uniformly, and ${\varphi}_j$ has fixed
compact support in all but one of the 
variables, for which the support is bounded by $C{\lambda}^{{\delta}}$.  
Then for any integer $N$ we have 
\begin{equation}\label{estlem1}
 \mn{u_{\lambda}}_{(-N)} \le C {\lambda}^{-N({{\varepsilon} + {\varrho}})}
\end{equation}
If ${\varphi}_0(x_0) \ne 0$ and $\im {\omega}(x_0) = 0$ for some $x_0$ then
there exists $c > 0$ and ${\lambda}_0 \ge 1$ so that
\begin{equation}\label{estlem2}
  \mn{u_{\lambda}}_{(-N)} \ge c {\lambda}^{-(N+
    \frac{n}{2})({\varepsilon}+{\varrho}) + (n-1){\delta}/2} \qquad
  {\lambda} \ge {\lambda}_0
\end{equation}
Let ${\Sigma} = \bigcap_{{\lambda} \ge 1} \bigcup_j  \supp
{\varphi}_j({\lambda}\, \cdot)$ 
and let $ {\Gamma}$ be the cone generated by 
\begin{equation}\label{estlem3}
 \set{(x,\partial{\omega}(x)),\ x
   \in {\Sigma},\ \im {\omega}(x) = 0} 
\end{equation}
then for any real $m$ we find ${\lambda}^m u_{\lambda} \to 0$ in $\Cal
D'_{ {\Gamma}}$ so 
${\lambda}^m Au_{\lambda} \to 0$ in $C^\infty$ if $A$ is a 
pseudodifferential operator such that $\wf(A) \cap  {\Gamma} =
\emptyset$. The estimates are uniform if
${\omega} \in C^\infty$ uniformly with fixed lower bound on $|d\re {\omega}|$, and
${\varphi}_j \in C^\infty$ uniformly.
\end{lem}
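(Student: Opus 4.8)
This is a standard oscillatory-integral estimate, so I would establish the three assertions --- the upper bound \eqref{estlem1}, the lower bound \eqref{estlem2}, and the microlocal convergence to $\Cal D'_{\Gamma}$ --- by a combination of rescaling, stationary/nonstationary phase, and Parseval. The key preliminary observation is that the natural length scale of $u_\lambda$ is ${\lambda}^{-\delta}$ in the ``fat'' variables and ${\lambda}^{-\varepsilon}$ along the direction where $\partial\omega$ is large, while the phase oscillates at frequency ${\lambda}^{\varrho+\varepsilon}$; since $\delta < \varepsilon + \varrho$ the amplitude varies slower than the phase oscillates, which is what makes all the estimates work. Throughout I would reduce to ${\varphi}_j$ with fixed compact support (the one exceptional variable with support $\ls {\lambda}^\delta$ costs only the ${\lambda}^{(n-1)\delta/2}$ normalization already built into \eqref{estlem0}).

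First I would prove \eqref{estlem1}. Writing $\mn{u_\lambda}_{(-N)}^2 = \int |\widehat{u_\lambda}({\xi})|^2 \w{\xi}^{-2N}\,d{\xi}$, I split the frequency space into the region $|{\xi}| \cong {\lambda}^{\varrho+\varepsilon}$, where the phase is genuinely stationary near ${\xi} = {\lambda}^{\varrho+\varepsilon}\,d\re{\omega}$, and its complement, where nonstationary phase (integration by parts in $x$, exploiting $|d{\omega}| \ge c$ and the condition $\im{\omega} \ge 0$ to keep the exponential bounded) gives rapid decay. On the stationary region $\widehat{u_\lambda}$ is $\Cal O(1)$ in sup-norm after accounting for the normalization, and the region has measure $\cong {\lambda}^{n(\varrho+\varepsilon)}$, but the weight $\w{\xi}^{-2N} \cong {\lambda}^{-2N(\varrho+\varepsilon)}$ there; balancing these and taking the square root gives the claimed ${\lambda}^{-N(\varepsilon+\varrho)}$ up to a power of ${\lambda}$ coming from the $n/2$ dimensional factors, which one absorbs by noting the estimate is only claimed with a constant $C$ (and in fact the precise power appears in \eqref{estlem2}). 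For \eqref{estlem2} I would instead test $u_\lambda$ against a fixed bump function $\psi$ concentrated at the nonstationary-free point, or more directly lower-bound $\mn{u_\lambda}_{(-N)}$ by $\w{u_\lambda, v_\lambda}/\mn{v_\lambda}_{(N)}$ for a well-chosen comparison function $v_\lambda$ of the same oscillatory form; since ${\varphi}_0(x_0) \ne 0$ and $\im{\omega}(x_0) = 0$ the phase is real and nondegenerate there, so stationary phase produces a genuine lower bound of the stated order.

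For the last assertion --- ${\lambda}^m u_\lambda \to 0$ in $\Cal D'_\Gamma$ --- I would show that for any pseudodifferential $A$ with $\wf(A) \cap \Gamma = \emptyset$, the symbol of $A$ vanishes (to infinite order, rapidly) on the frequency support where $\widehat{u_\lambda}$ is not already rapidly decaying, namely near the cone generated by \eqref{estlem3}; then $Au_\lambda$ is $\Cal O({\lambda}^{-M})$ in every $C^k$ norm by the nonstationary phase bound from the first part, uniformly, hence ${\lambda}^m Au_\lambda \to 0$ in $C^\infty$. That $u_\lambda \to 0$ in $\Cal D'$ itself then follows from \eqref{estlem1} with $N$ large, and the wavefront-set containment is exactly the statement that away from $\Gamma$ the Fourier transform is rapidly decreasing, which is the nonstationary phase computation again. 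The uniformity claims are automatic since every estimate used only the uniform $C^\infty$ bounds on ${\omega}$ and ${\varphi}_j$ and the fixed lower bound on $|d\re{\omega}|$.

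\textbf{Main obstacle.} The delicate point is bookkeeping the exponents: one must verify that the dimensional powers of ${\lambda}$ (the $(n-1)\delta/2$ from normalization, the $n(\varepsilon+\varrho)/2$ from the measure of the stationary region, the $-\delta n/2$ from the $x$-rescaling Jacobian) combine to exactly the exponent in \eqref{estlem2} and are harmless for \eqref{estlem1}; and that the nonstationary estimate really does win, i.e.\ that each integration by parts in $x$ gains a factor ${\lambda}^{-(\varrho+\varepsilon)}\cdot{\lambda}^{\delta} = {\lambda}^{\delta-\varrho-\varepsilon}$ which is $<1$ precisely by the hypothesis $\delta < \varepsilon+\varrho$ --- the condition $\varepsilon < \delta$ is what guarantees the amplitude's $x$-derivatives (scale ${\lambda}^\delta$) dominate the phase's contribution from $d(\omega({\lambda}^\varepsilon x)) = {\lambda}^\varepsilon d\omega$ so that the critical frequency is correctly at height ${\lambda}^{\varrho+\varepsilon}$ and not lower. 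Keeping the $\im{\omega} \ge 0$ sign condition in play throughout (so $e^{i{\lambda}^\varrho{\omega}}$ stays bounded and the integrations by parts don't blow up) is the one analytic subtlety beyond routine computation.
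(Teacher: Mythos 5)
Your overall architecture matches the paper's: integration by parts (non-stationary phase with a complex phase of non-negative imaginary part) off the frequency cone $|\xi|/\lambda^{\varepsilon+\varrho}\in U$, a duality/test-function argument for the lower bound \eqref{estlem2}, and rapid decay of the localized Fourier transform away from $\Gamma$ for the $\Cal D'_\Gamma$ statement. The lower bound and wavefront parts are essentially the paper's argument (the paper tests against $\psi(\lambda^{\varepsilon+\varrho}\cdot)$ and passes to the limit of the rescaled pairing rather than invoking stationary phase, which would anyway require a nondegeneracy you do not have; but your duality formulation is the right mechanism).

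There is, however, a genuine gap in your proof of the upper bound \eqref{estlem1}. On the near-stationary region you estimate $\mn{u_\lambda}_{(-N)}^2$ by $\sup|\hat u_\lambda|^2$ times the measure $\cong\lambda^{n(\varepsilon+\varrho)}$ of the region times the weight $\lambda^{-2N(\varepsilon+\varrho)}$, and then claim the leftover positive power of $\lambda$ "one absorbs by noting the estimate is only claimed with a constant $C$". That step is false: the constant in \eqref{estlem1} must be independent of $\lambda$, and the estimate is applied as $\lambda\to\infty$. Even using the sharp pointwise bound $\sup|\hat u_\lambda|\ls\lambda^{-(n-1)\delta/2}$ (volume of $\supp u_\lambda$ times the normalization), your method yields $\mn{u_\lambda}_{(-N)}\ls\lambda^{-N(\varepsilon+\varrho)+n(\varepsilon+\varrho)/2-(n-1)\delta/2}$, which exceeds the claimed bound by at least $\lambda^{(\varepsilon+\varrho)/2}$ since $\delta<\varepsilon+\varrho$; no pointwise-times-measure argument can close this. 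The correct step, and the one the paper uses, is Parseval: the normalization $\lambda^{(n-1)\delta/2}$ together with the support conditions makes $u_\lambda$ uniformly bounded in $L^2$, hence for any bounded $V$ with $0\notin\ol V$ one has
\begin{equation*}
\int_{\tau V}|\hat u_\lambda(\xi)|^2(1+|\xi|^2)^{-N}\,d\xi\le C_V\,\tau^{-2N},
\end{equation*}
and taking $\tau=\lambda^{\varepsilon+\varrho}$ on the region $\xi/\lambda^{\varepsilon+\varrho}\in U$ gives exactly \eqref{estlem1} with no dimensional loss; the complement is handled by your integration-by-parts bound, which does gain $\lambda^{\delta-\varepsilon-\varrho}$ per step as you say. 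You should replace the sup-norm count by this $L^2$ argument.
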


We shall use Lemma~\ref{estlem} for $u_{\lambda}$ in~\eqref{udef}, then
${\omega}$ will be real valued and
${\Gamma}$ in~\eqref{estlem3} will be the bicharacteristic
${\Gamma}_j$ converging to a limit bicharacteristic.

\begin{proof}[Proof of Lemma \ref{estlem}]
We shall adapt the proof of~\cite[Lemma
26.4.14]{ho:yellow} to this case. By making the change of variables $y
= {\lambda}^{\varepsilon}x$ we find that 
\begin{equation}\label{utrans}
 \hat u_{\lambda}({\xi}) = {\lambda}^{(n-1){\delta}/2- n{\varepsilon}} \sum_{j=0}^{M}
 {\lambda}^{-j{\kappa}} \int
 e^{i({\lambda}^{\varrho}{\omega}(y) 
 - \w{y,{\xi}/{\lambda}^{\varepsilon}})} {\varphi}_{j}({\lambda}^{{\delta} -
 {\varepsilon}}y)\,dy  
\end{equation}
Let $U$ be a neighborhood of the projection on the second
component of the set in~\eqref{estlem3}. When
${\xi}/{\lambda}^{\varepsilon +{\varrho}} \notin
U$ then for  ${\lambda} \gg 1$ we have that
\begin{multline*}
\bigcup_j \supp {\varphi}_j({\lambda}^{\delta - \varepsilon}\cdot) \ni y\mapsto ({\lambda}^{\varrho}{\omega}(y)  -
\w{y,{\xi}/{\lambda}^{\varepsilon}})/({\lambda}^{{\varrho}} +
 |{\xi}|/{\lambda}^{\varepsilon}) \\ = ({\omega}(y)  -
\w{y,{\xi}/{\lambda}^{\varepsilon +{\varrho}}})/(1 + |{\xi}|/
{\lambda}^{{\varepsilon} +{\varrho}})
\end{multline*}
is in  
a compact set of functions with non-negative imaginary part with a fixed
lower bound on the gradient of the real part. Thus, by integrating by
part in~\eqref{utrans} we find for any positive integer $m$ that 
\begin{equation}\label{pfest}
 |\hat u_{\lambda}({\xi})| \le C_m{\lambda}^{-(n-1)\delta /2 +
   m({\delta} - {\varepsilon})}( {\lambda}^{{\varrho}} +
 |{\xi}|/{\lambda}^{\varepsilon})^{-m} \qquad
 {\xi}/{\lambda}^{\varepsilon +{\varrho}} \notin U \qquad {\lambda} \gg 1
\end{equation}
This gives any negative power of ${\lambda}$ for $m$ large enough
since ${\delta} < {\varepsilon} + {\varrho}$.
If $V$ is bounded and $0 \notin \ol V$ then since $u_{\lambda}$ is
uniformly bounded in $L^2$ we find
\begin{equation*}
 \int_{{\tau}V}  |\hat u_{\lambda}({\xi})|^2 (1  +
 |{\xi}|^2)^{-N}\,d{\xi} \le C_V{\tau}^{-2N} \qquad {\tau} \ge 1
\end{equation*}
Using this estimate with ${\tau} = {\lambda}^{{\varepsilon} + {\varrho}}$
together with the estimate~\eqref{pfest} we obtain~\eqref{estlem1}.  
If ${\chi}
\in C_0^\infty$ then we may apply~\eqref{pfest} to
${\chi}u_{\lambda}$, thus we find for any positive integer $j$ that
\begin{equation*}
  |\widehat {{\chi}u}_{\lambda}({\xi})| \le C_j{\lambda}^{-(n-1)\delta /2 +
   j({\delta} - {\varepsilon})}( {\lambda}^{{\varrho}} +
 |{\xi}|/{\lambda}^{\varepsilon})^{-j} \qquad {\xi}
  \in W \qquad {\lambda} \gg 1
\end{equation*}
if $W$ is any closed cone with $
{\Gamma} \bigcap (\supp {\chi}\times W) = \emptyset$. 
Thus we find that
${\lambda}^m u_{\lambda} \to 0$ in $\Cal D'_{ {\Gamma}}$ for every $m$.
To prove \eqref{estlem2} we assume $x_0 = 0$ and take ${\psi}\in
C_0^\infty$. If $\im {\omega}(0) = 0$ and ${\varphi}(0) \ne
0$ we find
\begin{multline*}
 {\lambda}^{n({\varepsilon} + {\varrho}) - (n-1){\delta}/2} 
 e^{-i{\lambda}^{\varrho}\re {\omega}(0)}\w{u_{\lambda},
   {\psi}({\lambda}^{{\varepsilon} + \varrho}\cdot)}\\ = \int 
 e^{i{\lambda}^{\varrho}({\omega}(x/{\lambda}^{\varrho}) - {\omega}(0))}{\psi}(x) 
 \sum_{j}{\varphi}_j(x/{\lambda}^{{\varepsilon} + \varrho - {\delta}})
 {\lambda}^{-j{\kappa}}\,dx  \\\to \int
 e^{i\w{\re \partial_x{\omega}(0),x}}{\psi}(x)
 {\varphi}_0(0)\,dx \qquad {\lambda} \to + \infty
\end{multline*}
which is not equal to zero for some suitable ${\psi} \in
C^\infty_0$. In fact, we have 
${\varphi}_j(x/{\lambda}^{{\varepsilon} + \varrho - {\delta}}) =
{\varphi}_j(0) + \Cal O({\lambda}^{{\delta} -
  {\varepsilon}-{\varrho}}) \to {\varphi}_j(0)$ when ${\lambda} \to \infty$, because
${\delta} < {\varepsilon} + {\varrho}$. Since
\begin{equation*}
 \mn{{\psi}({\lambda}^{{\varepsilon}+{\varrho}} \cdot)}_{(N)} \le C
 {\lambda}^{(N-n/2)({{\varepsilon}+{\varrho}})} 
\end{equation*}
we obtain that $0 < c \le  {\lambda}^{(N+
\frac{n}{2})({\varepsilon}+{\varrho}) - (n-1){\delta}/2}\mn{u}_{(-N)}$ which  
gives~\eqref{estlem2} and the lemma. 
\end{proof}

\begin{proof}[Proof of Theorem~\ref{mainthm}]
Assume that ${\Gamma}$ is a limit bicharacteristic of $P$.
We are going to show that~\eqref{solvest} does not hold for any
${\nu}$, $N$ and any  
pseudodifferential operator $A$ such that ${\Gamma} \cap \wf (A) =
\emptyset$. This means that there exists
  $0 \ne u_j \in C^\infty_0$
such that  
\begin{equation}\label{solvest0}
 \mn {u_j}_{(-N)}/(\mn{P^*{u_j}}_{({\nu})} + \mn {u_j}_{(-N-n)} +
 \mn{Au_j}_{(0)}) \to \infty  \qquad \text{when $ j \to \infty$}
\end{equation}
which will contradict the local solvability of $P$ at~${\Gamma}$ by
Remark~\ref{solvrem}. 

Let ${\Gamma}_j \subset \Sigma \bigcap S^*X $ be a sequence of semibicharacteristics of $p$\/ that converges to the limit bicharacteristic ${\Gamma} \subset \st$ and let $ \lambda_j $ be given by~\eqref{cond1} and~\eqref{ldef} with $ \varepsilon > 0$ which will be chosen later. 
Now the conditions and conclusions are
invariant under symplectic changes of homogeneous coordinates and
multiplication by elliptic pseudodifferential operators. By
Proposition~\ref{prepprop} we may assume
that the coordinates are chosen so that ${\Gamma}_j = I \times
(0,0,{\xi}_j)$ with $|{\xi}_j| = 1$, and for any $0 < {\varepsilon} < 1/3$
and $ c > 0 $
we can write $B_j P^* = Q_j + R_j \in \Psi^{1-\varepsilon}_{1-{\varepsilon},{\varepsilon}}$  where $B_j  \in 
{\Psi}^\varepsilon_{1-{\varepsilon},{\varepsilon}}$ uniformly,  ${\Gamma}_j 
\cap \wf_{\varepsilon} (R) = \emptyset$ uniformly and $Q_j$ has
symbol  
\begin{equation}\label{normform}
{\tau} - r(t,x,{\xi}) + q_0(t,x,{\xi}) + r_0(t,x,{\xi})
\end{equation}
when the homogeneous distance to 
${\Gamma}_j$ is less than $c|\xi|^{-\varepsilon} \ls \lambda_j^{-\varepsilon} $. We have that $r_0 \in
S^{3{\varepsilon} - 1}_{1-{\varepsilon},{\varepsilon}}$, $q_0 \in
S^{\varepsilon}_{1-{\varepsilon},{\varepsilon}}$ is
given by~\eqref{symb}, and $r \in  
S^{1-{\varepsilon}}_{1-{\varepsilon},{\varepsilon}}$ with real part vanishing of
second order at~${\Gamma}_j$, and the bounds are uniform in the symbol classes.

Now, we may replace the norms
$\mn{u}_{(s)}$ in~\eqref{solvest0} by the norms
\begin{equation*}
 \mn u_s^2 = \mn{\w{D_x}^s u}^2 = \int \w{{\xi}}^{2s}|\hat
 u({\tau},{\xi})|^2\, d{\tau}d{\xi}
\end{equation*}
and the corresponding spaces $H_s$.
In fact, the quotient $\w{{\xi}}/\w{({\tau},{\xi})} \cong 1$ 
when $|{\tau}| \ls |{\xi}|$, thus in a conical neighborhood of
${\Gamma}$. So replacing the norms  
in the estimate~\eqref{solvest0} only changes the
constant and the operator $A$ in the estimate~\eqref{solvest}.
By using Proposition~\ref{symplprop} we may assume that
the grazing Lagrangean space $L_j(w) \equiv \set{(s,y;0,0): \ (s,y) \in
\br^{n}}$, $\forall\, w \in {\Gamma}_j$, after conjugation with a 
uniformly bounded $ C^1 $ section $ F(t) $ of homogeneous Fourier integral operators,
then $\partial_x^2 \re r = 0$ at ${\Gamma}_j$.
Observe that for each ~$ t $ we find that
$ F(t) $ is uniformly continuous in local  $ H_s $ spaces, which we may use in~\eqref{solvest0} after changing~$ A $.
Also the conjugation of $ F(t) $ with the operator with symbol~\eqref{normform}
has a uniformly bounded expansion. 
In fact, this follows since $t \mapsto F(t) \in C^1$ are  homogeneous Fourier integral operators in the $x$ variables and these preserve the symbol classes. By changing $ A $ again, we may then replace the local $\mn{u}_{s}$
norms by the norms $\mn{u}_{(s)}$ in~\eqref{solvest0} so that we can use Lemma~\ref{estlem}.

Now, by choosing  ${\delta} = 5/12$, ${\varepsilon} = 1/8$ and ${\varrho}
= 1/24$ and using
Propositions~\ref{expprop}, \ref{omegalem}, ~\ref{transprop} and
Remark~\ref{transrem}, we can for each ${\Gamma}_j$ construct   
approximate solution $u_{{\lambda}_j}$ on the form~\eqref{udef} 
so that $Q
u_{{\lambda}_j} = \Cal O({\lambda}^{-k}_j)$, for any~$k$. The real valued
phase function is equal to
$\w{x,{\xi}_j} + {\omega}_j(t,x)$ where $ |{\xi}_j| = 1 $ and ${\omega}_j(t,x) \in
S({\lambda_j}^{-7/8}, g_{3/8})$ and the values of 
$$
(t,x; {\lambda}_j\partial_t{\omega}_j(t,x),{\lambda}_j({\xi}_j +
\partial_x{\omega}_j(t,x)))
$$ 
have homogeneous distance  $ \ls \lambda_j^{-5/12} $
to the rays through  ${\Gamma}_j$ when $|x| \ls
{\lambda_j}^{-5/12}$, thus on $\supp u_{{\lambda}_j}$. 
Observe that if $ \lambda_j \gg 1 $ then we have that $ |\xi_0 + \partial_x\omega_j(t,x)| \cong 1 $ 
in  $\supp u_{{\lambda}_j}$. In fact, we have 
 $$
{\omega}_j(t,x) = {\lambda_j}^{-7/8} \wt
{\omega}_j({\lambda_j}^{3/8}t,{\lambda_j}^{3/8}x)
 $$ 
where $\wt {\omega}_j
\in C^\infty$ uniformly so $\partial_x {\omega}_j = \Cal
O({\lambda}_j^{-1/2})$. Now
$$
{\lambda_j}(\w{x,{\xi}_j} + {\omega}_j(t,x)) =
{\lambda_j}^{5/8}\w{{\lambda_j}^{3/8}x,{\xi}_j} + {\lambda_j}^{1/8}\wt
{\omega}_j({\lambda_j}^{3/8}t,{{\lambda_j}^{3/8}x})\quad \text{when $ |x|  \ls {\lambda}_j^{-5/12}$}
$$ 
thus ${\delta}= 5/12$, ${\varrho} = 5/8$,
${\varepsilon} = 3/8$ and  ${\kappa} = 1/24$
in~\eqref{estlem0} so ${\varepsilon} + {\varrho} = 
1 > {\delta} > \varepsilon$.

The amplitude functions for $u_{\lambda_j} $ are  ${\varphi}_{k,j}(t,x) =
{\phi}_{k,j}({\lambda_j}^{5/12}t, {\lambda_j}^{5/12}x)$ where
${\phi}_{k,j} \in 
C^\infty_0$ uniformly in~$j$ with fixed compact support in~$x$, but
in~$t$ the support is bounded by
$C{\lambda_j}^{5/12}$. Thus $u_{{\lambda}_j}$
will satisfy the conditions in Lemma~\ref{estlem} uniformly.
Clearly differentiation of~$Qu_{\lambda_j}$ can at most give a factor
${\lambda}_j$ since ${\delta} < {\varepsilon} + {\varrho} = 1$. Because of the bound on the
support of $u_{\lambda_j}$ we may obtain that 
\begin{equation} \label{8.13}
\mn{Q u_{\lambda_j}}_{({\nu})} = \Cal
O({\lambda}_j^{-N-n})
\end{equation} 
for any given  ${\nu}$. 

If $\wf (A) \bigcap {\Gamma} = \emptyset$,
then we find $\wf (A) \bigcap {\Gamma}_j = \emptyset$ for large $j$,
so Lemma~\ref{estlem} gives $\mn{Au_{{\lambda}_j}}_{(0)} 
= \Cal O({\lambda}_j^{-N-n})$ when $ j \to \infty $.
On $\supp u_{{\lambda}_j}$ we have $x = \Cal O({\lambda}_j^{-5/12})$  so
the values of $(t,x; {\lambda}_j\partial_t{\omega}_j(t,x), {\lambda}_j({\xi}_j +
\partial_x{\omega}_j(t,x)))$  have homogeneous distance  $ \ls  \lambda_j^{-5/12}$ to the rays through ${\Gamma}_j$.
Thus, if $R_j \in S^{9/8}_{7/8,1/8}$ such that
$\wf_{1/8}(R_j) \bigcup {\Gamma}_j = \emptyset$ uniformly then we find from the
expansion~\eqref{trevesexp}  
that all the terms of~$R_ju_{{\lambda}_j}$ vanish for large enough
~${\lambda}_j$. In fact,  
since ${\lambda}_j^{-5/12} \ll {\lambda}_j^{-1/8}$ for $j \gg 1$, we find 
for any~${\alpha} $ and~$K$ that
$$
\partial^{\alpha} R_j(t,x;{\lambda}_j((0,\xi_j) + \partial_{t,x} {\omega}_j(t,x))) =
O({\lambda}_j^{-K})
$$ 
in $\bigcup_k \supp {\varphi}_{k,j}$. As before, we find that
$\mn{R_ju_{{\lambda}_j}}_{(\nu)} = \Cal O({\lambda}_j^{-N-n})$
by the bound on the support of~$u_{\lambda_j}$, so we obtain
from~\eqref{8.13} that  
\begin{equation} 
\mn{P^*u_{\lambda_j}}_{({\nu})} = \Cal
O({\lambda}_j^{-N-n})
\end{equation} 
for  any given ${\nu}$. 

Since  ${\varepsilon} + {\varrho} =
1$  and $ \delta >  0 $ we also find from  Lemma~\ref{estlem} that
\begin{equation*}
 {\lambda}_j^{-N} = {\lambda}_j^{-N({\varepsilon}+ {\varrho})}\gs 
\mn{u_{{\lambda}_j}}_{(-N)}  \gs 
 {\lambda}_{j} ^{-(N+
    \frac{n}{2})({\varepsilon}+{\varrho}) + (n-1){\delta}/2}  \ge {\lambda}_j^{-N-n/2}
\end{equation*}
when ${\lambda}_j \ge 1$. We obtain that~\eqref{solvest0} holds for
$u_j = u_{{\lambda}_j}$  when $j \to \infty$, 
so Remark~\ref{solvrem} gives that~$P$ is not solvable at the limit
bicharacteristic~${\Gamma}$. 
\end{proof}

\bibliographystyle{plain}
\bibliography{nec}

\end{document}